\documentclass[12pt]{article}
\textheight 22.4cm \textwidth 15.8cm
\topmargin -1.5cm \oddsidemargin 0.3cm \evensidemargin -0.3cm
\usepackage{verbatim,amsmath,amssymb,mathrsfs,amsthm,amsfonts,bm}
\usepackage{varwidth}
\usepackage{extarrows}
\usepackage{enumitem}
\usepackage{fancyhdr}
\usepackage{booktabs}
\usepackage[dvipsnames]{xcolor}
\usepackage{graphicx,wrapfig}
\usepackage{subfigure}
\usepackage{caption}
\usepackage{amsmath}
\usepackage{makecell}
\usepackage{multirow}
\usepackage{array}
\usepackage{setspace}
\usepackage{float}
\usepackage{cite}
\usepackage{color,mathtools}
\usepackage{epstopdf}
\usepackage{lipsum}
\usepackage{appendix}
\usepackage{algorithm}
\usepackage{algorithmic}
\usepackage{blindtext}
\usepackage{ulem}

\numberwithin{equation}{section}
\numberwithin{figure}{section}
\numberwithin{table}{section}
\allowdisplaybreaks

\newtheorem{theorem}{Theorem}[section]
\newtheorem{lemma}[theorem]{Lemma}

\newtheorem{property}[theorem]{Property}

\newtheorem{remark}{Remark}[section]
\newtheorem{example}{Example}[section]

\makeatletter
\makeatother

\usepackage{hyperref}
\hypersetup{
    colorlinks=true, linkcolor=black,
    urlcolor=cyan, citecolor=black,
}

\begin{document}

\title{Structure-preserving finite volume
\\ arbitrary Lagrangian-Eulerian WENO  schemes \\for the shallow water equations}
\date{}
\author{}
\author{Jiahui Zhang\thanks{School of Mathematical Sciences, University of Science and Technology of China, Hefei, Anhui 230026, P.R. China.  E-mail: zjh55@mail.ustc.edu.cn.}
	\and Yinhua Xia\thanks{School of Mathematical Sciences, University of Science and Technology of China, Hefei, Anhui 230026, P.R. China.  E-mail: yhxia@ustc.edu.cn. 
	}
	\and Yan Xu\thanks{Corresponding author.  School of Mathematical Sciences, University of Science and Technology of China, Hefei, Anhui 230026, P.R. China.  E-mail: yxu@ustc.edu.cn. 
	}
}

\maketitle

\begin{abstract}
This paper develops the structure-preserving finite volume weighted essentially non-oscillatory (WENO) hybrid schemes for the shallow water equations under the arbitrary Lagrangian-Eulerian (ALE) framework, dubbed as ALE-WENO schemes.  The WENO hybrid reconstruction is adopted on moving meshes, which distinguishes the smooth, non-smooth, and transition stencils by a simple smoothness detector. To maintain the positivity preserving and the well-balanced properties of the ALE-WENO schemes,  we adapt the positivity preserving limiter and the well-balanced approaches on static meshes to moving meshes.  The rigorous theoretical analysis and numerical examples demonstrate the high order accuracy and positivity-preserving property of the schemes under the ALE framework. For the well-balanced schemes, it is successful in the unique exact equilibrium preservation and capturing small perturbations of the hydrostatic state well without numerical oscillations near the discontinuity. Moreover, our ALE-WENO hybrid schemes have an advantage over the simulations on static meshes due to the higher resolution interface tracking of the fluid motion.

  \smallskip
  \textbf{Keywords}: Shallow water equations;  arbitrary Lagrangian-Eulerian; finite volume method;  WENO; positivity-preserving; well-balanced.
\end{abstract}
\maketitle

\clearpage

\section{Introduction}\label{se:in}
This paper concentrates on discussing the shallow water equations with source terms owing to the non-flat bottom topography. Deriving from the incompressible Navier-Stokes equations, the shallow water equations have been widely used in coastal areas, reservoirs, estuary, and rivers, such as forecasting tsunamis, floods, atmospheric circulation, etc.
It has the following form in one-dimensional case
\begin{equation}\label{sha}
    \left\{\begin{array}{l}
    h_t+(hu)_{x}=0, \\
    (hu)_t+(hu^2+\dfrac{1}{2}gh^2)_x=-ghb_x,\\
    \end{array}\right.
\end{equation}
where $h,u,b$ stands for the water height, the velocity of the fluid, the bottom topography respectively and $g$ is the gravitational constant.\par

As a typical example of hyperbolic balance laws, many methods have been proposed consisting of finite difference schemes \cite{li2015well,xing2006highfd}, finite volume schemes \cite{klingenberg2019moving,noelle2006well} and discontinuous Galerkin (DG) methods \cite{xing2014exactly,vater2019limiter}.  Finite difference schemes have higher efficiency and less cost in multi-dimensional cases. Finite volume schemes and DG methods have an essential advantage in simulations on unstructured meshes without losing high order accuracy and conservation properties. This paper is concerned with finite volume schemes in the arbitrary Lagrangian-Eulerian (ALE) formulation.
The critical problem we may encounter in numerical simulation is the existence of strong shocks in hyperbolic problems. For this reason, the total variation diminishing (TVD) methods \cite{leveque1992numerical} were constructed. However, TVD schemes are at most second-order and will degenerate to first-order accuracy at critical points.
After this, there has been tremendous interest in developing high-order and high-resolution schemes. In 1987, Harten et al. \cite{harten1987uniformly} first designed
essentially non-oscillatory (ENO) schemes in the finite volume framework. In 1988, Shu and Osher \cite{shu1988efficient} proposed finite difference ENO schemes.
In 1994, the WENO reconstruction procedure, which adopted a convex combination of interpolations from sub-stencils, was first introduced by  Liu et al. \cite{liu1994weighted} to maintain the properties of the ENO schemes. 
Soon after, by proposing a new smoothness indicator to calculate the nonlinear weights, Jiang and Shu \cite{shu1998essentially} generalized WENO schemes to higher-order accuracy, which was called the classic WENO-JS scheme. Based on it, numerous improvements and modifications to the WENO procedure were designed.
For example, the WENO-M schemes \cite{henrick2005mapped} and the WENO-Z scheme \cite{castro2011high} were developed to improve accuracy at critical points of smooth solutions.
Without loss of robustness, Levy et al. \cite{levy1999central} proposed an efficient and straightforward central WENO scheme for solving hyperbolic systems of conservation laws that requires no characteristic decomposition. Recently, WENO hybrid scheme \cite{li2010hybrid} has been developed to save computational cost and improve the resolution considerably by using different discontinuity indicators to identify troubled elements. In \cite{wan2021new}, a new finite difference WENO hybrid scheme was proposed for hyperbolic conservation laws.
More recent developments of WENO schemes have been introduced in a review paper \cite{shu2020essentially}. \par

When the solution contains discontinuities or multidimensional problems, refined uniform meshes will be required to capture extremely subtle features. The computational cost is prohibitively huge. As a result, adaptive mesh refinement and deformable moving meshes have attracted considerable attention owing to higher efficiency under the same degree of freedom. Beisiegel et al. \cite{beisiegel2021metrics} introduced a physics-based indicator to determine the target refinement area for the shallow water equations, and the meshes are derived from the computed solution. This paper is devoted to the arbitrary Lagrangian-Eulerian (ALE) method, in which the mesh is no longer fixed and can move with a chosen mesh velocity that may not depend on fluid particles.
In \cite{dumbser2014arbitrary}, arbitrary high order derivatives (ADER) WENO finite volume method for hyperbolic conservation laws was presented with time-accurate local time stepping under the ALE framework. In \cite{klingenberg2017arbitrarya}, Klingenberg et al. developed the arbitrary Lagrangian-Eulerian discontinuous Galerkin (ALE-DG) method for one-dimensional conservation laws. However, it cannot extend to multi-dimensional cases directly due to the unsatisfactory of the geometric conservation law (GCL) property, and the modified strong stability preserving Runge-Kutta (SSP-RK) time discretization can be adopted in \cite{fu2019arbitrary}. For finite volume schemes, Guillard and  Farhat \cite{guillard2000significance} have proved that it is of significance to satisfy the discrete geometric conservation law (D-GCL) on moving mesh grids. For Hamilton-Jacobi equations,  arbitrary Lagrangian-Eulerian local discontinuous Galerkin method and finite difference WENO scheme can be applied, see \cite{klingenberg2017arbitraryh,li2019high,li2020moving}.
In \cite{zhang2021positivity}, the high order well-balanced ALE-DG method has been developed for the shallow water equations.\par

For solving the shallow water equations, we need our methods to be capable of structure-preserving. The first is the property of positivity preservation. Dealing with problems containing both wet and dry areas is inevitable, such as flood waves and dam breaks. Therefore the positivity preserving of water height is of necessity. Many positivity-preserving schemes have been designed in \cite{bermudez1994upwind,bokhove2005flooding,bryson2011well}, and in recent years the most commonly used is a simple linear scaling limiter proposed by Xing and Zhang \cite{xing2011high,xing2010positivity}. It works for finite volume schemes and DG methods in one and two dimensions with a suitable Courant Friedrichs Lewy (CFL) condition number. It can be generalized to unstructured meshes as well \cite{xing2013positivity}. Another important feature is maintaining hydrostatic state solutions, in which the non-zero flux gradient exactly balances the source term. The well-known hydrostatic state solutions, which are so-called moving water equilibrium in one space dimension, are given by
\begin{equation}\label{moving_water}
  m := hu = \text{constant}, \quad E := \dfrac{1}{2}u^2 + g(h + b) = \text{constant}.
\end{equation}
Here $m$, $E$ are the moving water equilibrium variables. When the velocity reduces to zero, we get a special case of the hydrostatic state (\ref{moving_water})
\begin{equation}\label{still_water}
  u = 0, \quad h + b = \text{constant},
\end{equation}
which is called still water equilibrium (referred to as a lake at rest as well).
In the past two decades, a great deal of mathematical effort, e.g., \cite{audusse2004fast,bermudez1994upwind,leveque1998balancing}, has been devoted to the study of constructing well-balanced schemes maintaining the well-balanced property which was first proposed by Bermudez and Vazquez \cite{bermudez1994upwind},
while they only have the first and second order accuracy at most. Recently, extensive numerical schemes have been developed to overcome the problem when obtaining the desired still water equilibrium solution, referring to the review paper \cite{xing2014survey}.
The critical approach in \cite{noelle2006well} is a new quadrature rule for the source term used to extend well-balanced schemes to any desired order of accuracy. A special source term splitting is used in \cite{xing2006highfd,xing2006highfv}.
In \cite{xing2006new} the numerical flux is modified to balance the source term.
Some researchers \cite{li2014high,li2015well} turn their attention to an alternative formulation called pre-balanced equations, which is derived from the hydrostatic state and equivalent to the classic form (\ref{sha}).
However, some limitations exist since the schemes developed for still water preserving can not be generalized for moving water equilibrium. Noelle et al. \cite{noelle2007high} developed high order well-balanced finite volume WENO schemes by defining a reference equilibrium state and an equilibrium limiter, together with the discretization of the source term. Several other methods have been designed for moving water equilibrium; see \cite{xing2014exactly,xing2011advantage} for further information.\par

The main target of this paper is to combine the ALE method with the finite volume WENO scheme to develop the structure-preserving finite volume WENO schemes on moving meshes for the shallow water equations.  The WENO hybrid reconstruction procedure \cite{wan2021new} generalized to moving meshes is used for our spatial discretization. Unlike general hybrid schemes,  the smoothness detector divides the stencils into smooth, transition, or non-smooth regions. A modified total variation diminishing Runge-Kutta (TVD-RK) time discretization method \cite{fu2019arbitrary} is adopted to obtain the geometric conservation law for the ALE scheme. We can also develop well-balanced WENO hybrid schemes based on hydrostatic construction and special source term treatment on moving meshes. The strictly theoretical analysis and the numerical tests verify that the proposed methods for moving meshes are positivity preservation, well balanced, high order accuracy, and essentially non-oscillatory. We highlight that the positivity-preserving limiter works well with both the non well-balanced ALE-WENO scheme and two well-balanced ALE-WENO hybrid schemes we developed on moving meshes.\par

An outline of this paper is as follows.
Section \ref{se:no} presents some notations about the grid movements and the  WENO hybrid reconstruction procedure on moving meshes.
In Section \ref{se:ale}, the finite volume WENO  scheme and the positivity-preserving property both for the semi-discrete and fully-discrete scheme are given under the arbitrary Lagrangian-Eulerian framework.
In Section \ref{se:wb}, by introducing the technique of the bottom topography reconstruction, we get the high order well-balanced finite volume ALE-WENO hybrid schemes with hydrostatic reconstruction and special source term treatment, which maintain the well-balanced property and positivity-preserving property as well.
In Section \ref{se:nu}, numerical results in different circumstances are illustrated to verify all the mathematical properties.
In the end, a summary of the main works is listed in the form of concluding remarks.

\section{Notations  and WENO reconstruction}\label{se:no}
We first consider the grid motion and the WENO hybrid reconstruction procedure to describe the specific schemes under the ALE framework.
\subsection{Grid definitions}
Assuming that the computational domain $\Omega $ in one-dimension is divided into $N$ cells, i.e.
$$\Omega = \bigcup \limits  _{j=1}^{N}[x_{j-\frac{1}{2}}^n, x_{j+\frac{1}{2}}^n] = \bigcup\limits _{j=1}^{N}[x_{j-\frac{1}{2}}^{n+1}, x_{j+\frac{1}{2}}^{n+1}],$$
where $\left\{x_{j+\frac{1}{2}}^n \right\}_{j=0}^N$ are grid points at time $t_n$  and  $\left\{x_{j+\frac{1}{2}}^{n+1} \right\}_{j=0}^N$ at time $t_{n+1}$.
We define the moving speed of the $j$-th grid point
\begin{equation}\label{w1D}
\begin{aligned}
w_{j+\frac{1}{2}}^n = \dfrac{x_{j+\frac{1}{2}}^{n+1}-x_{j+\frac{1}{2}}^{n}}{t_{n+1}-t_n},
 \end{aligned}
 \end{equation}
and the rays connecting $x_{j+\frac{1}{2}}^n$ and $x_{j+\frac{1}{2}}^{n+1}$
\begin{equation}\label{x1D}
\begin{aligned}
x_{j+\frac{1}{2}}(t) = x_{j+\frac{1}{2}}^n + w_{j+\frac{1}{2}}^n(t-t_n), \ \text{for all}\ t\in[t_n,t_{n+1}].
 \end{aligned}
 \end{equation}
Therefore for any  time-dependent interval
\begin{align*}
\mathcal T_j(t) = [x_{j-\frac{1}{2}}(t),x_{j+\frac{1}{2}}(t)],
 \end{align*}
and the length of a cell
\begin{align*}
\Delta_j(t) = x_{j+\frac{1}{2}}(t)-x_{j-\frac{1}{2}}(t),
 \end{align*}
we can define the global moving grid velocity $w:\Omega\times[0,T] \rightarrow R$
\begin{equation}\label{ww1D}
\begin{aligned}
w(x,t) = w_{j+\frac{1}{2}}\dfrac{x-x_{j-\frac{1}{2}}(t)}{\Delta_j(t)} + w_{j-\frac{1}{2}}\dfrac{x_{j+\frac{1}{2}}(t)-x}{\Delta_j(t)}.
 \end{aligned}
 \end{equation}
Moreover, we define the global cell length
\begin{align*}
\Delta x =\max\limits_{t\in[0,T]}\max\limits_{1\leq j\leq N} \Delta_j(t).
 \end{align*}
In order to ensure the regularity and rationality of the deformed mesh, we assume
\begin{itemize}
\setlength{\parskip}{0.0em}
  \item [(A1)] For all $j=1,\cdots,N$ and $t\in[t_n,t_{n+1}],$ we have $\Delta_j(t) > 0.$
  \item [(A2)] The global moving grid velocity $w(x,t)$ and its derivative $\partial_x w(x,t)$ are bounded in  $\Omega\times[0,T]$.
  \item [(A3)] There exists a constant $M>0$, independent of $\Delta x$, such that $$\Delta_j(t) \geq M \Delta x, \ \forall j=1,\cdots,N. $$
\end{itemize}

Analogous to one-dimensional case,  denoting two-dimensional computational domain  $\Omega$ as
$$\overline\Omega = \bigcup \limits  _{i=1}^{Nx}\bigcup \limits  _{j=1}^{Ny}\overline {\mathcal T_{ij}^n} = \bigcup \limits  _{i=1}^{Nx}\bigcup \limits  _{j=1}^{Ny}\overline{\mathcal T_{ij}^{n+1}},$$
where $\mathcal K^n = \bigcup\limits _{i=1}^{Nx}\bigcup\limits _{j=1}^{Ny} \mathcal T_{ij}^n$ are regular quadrilateral meshes which cover the convex polyhedron domain exactly at time $t_n$ and the same mesh topology for $\mathcal K^{n+1}$.
Let $\boldsymbol w_{ij}=(w_{ij}^x,w_{ij}^y)^T$  be the mesh moving speed of the vertice $(x_{ij},y_{ij})$.
They are defined in  the $x$ and $y$  direction  as follows:
\begin{equation}\label{w2D}
\begin{aligned}
&w_{ij}^{x,n}= \dfrac{x_{ij}^{n+1}-x_{ij}^{n}}{t_{n+1}-t_n}, \ x_{ij}(t) = x_{ij}^n + w_{ij}^{x,n}(t-t_n),\\
&w_{ij}^{y,n}=\dfrac{y_{ij}^{n+1}-y_{ij}^{n}}{t_{n+1}-t_n},\ y_{ij}(t) = y_{ij}^n + w_{ij}^{y,n}(t-t_n).
 \end{aligned}
 \end{equation}
We can define the time-dependent domain $\mathcal T_{ij}(t)$, the area of corresponding cell $\Delta_{ij}(t)$, and also the global grid velocity
\begin{equation}\label{wglobal2D}
\boldsymbol w(x,y,t)  = ( w^x(x,y,t), w^y(x,y,t))^T
\end{equation}
in a similar way.

\subsection{The WENO hybrid reconstruction}
\label{subse:weno}
In a finite volume scheme, the computational variables are cell averages. Our goal is to recover a high order accuracy point value of the computational variables $u$ located at node $x$ through the neighboring cell averages $\bar u_j$.
We denote the reconstruction procedure at time $t_n$ as
\begin{equation}\label{recon1D}
 u_j^n(x) = \mathcal R \left( x,\left\{ \bar u_i^n\right\}_{i\in S_j^n}\right),
\end{equation}
where $S_j^n$ is the big stencil namely the set of all index tuples of cells in the reconstruction.\par

In this paper, we use the WENO hybrid reconstruction procedure developed in \cite{wan2021new} for our spatial discretization. It is simple and effective to increase the spectral resolution and maintain the high order accuracy  and oscillatory-free performance  by adopting a smoothness detector. Here we extend the  reconstruction procedure to moving meshes. For simplicity of presentation, we only describe the fifth-order WENO hybrid reconstruction procedure for one-dimensional scalar conservation laws. Hybrid WENO approximations are given in the following way.\par
\textbf {Step 1.\ }
Based on the small stencils at time $t_n$
$$s_r^n(j) = \{\mathcal T_{j-r}^n, \cdots, \mathcal T_{j-r+k-1}^n\}, \ r=0,1,2,$$
we obtain the reconstructed polynomial of third order accuracy
$$u^n_r(x) = \sum \limits _{i=0}^2 c_{ri}^n(x) \bar u _{j-r+i}, \ r=0,1,2,$$
and then use the big stencil $S_j^n=\{\mathcal T_{j-2}^n, \mathcal T_{j-1}^n,\mathcal T_{j}^n,\mathcal T_{j+1}^n, \mathcal T_{j+2}^n\}$ to find the linear weights $d_r^n(x)$ such that
\begin{align*}
 & u^{n}(x)=\sum \limits _{r=0}^2 d_r^n(x) u^n_r(x) =  u(x,t_n)+O((\Delta x^{n})^5).
 \end{align*}
Here $\Delta x^{n}: =\max\limits_{1\leq j\leq N} \Delta_j^n$ is the cell length at time $t_n$.

\textbf {Step 2.\ }
We compute the smoothness indicators
\begin{align*}
\beta_r^n=\sum \limits _{l=1}^2 (\Delta x^{n})^{2l-1} \displaystyle{\int_{x_{j-{1}/{2}}^n}^{x_{j+{1}/{2}}^n} \  \left( \dfrac{\partial^l u_r^n(x)}{\partial x^l} \right)^2 \ dx}, \ r=0,1,2,
\end{align*}
and define the smoothness detector
\begin{equation}\label{omegaj}
 \gamma = \dfrac{\tau}{\beta^A+\varepsilon},\ \text{where} \  \tau = |\beta_0^n - \beta_2^n|,\ \beta^A = \dfrac{1}{3}(\beta_0 + \beta_1 + \beta_2),
\end{equation}
here $\varepsilon>0$ is a small constant in order to prevent the denominator from being zero and taken as $10^{-6}$ in our numerical tests unless otherwise specified.
It is the critical component to classify the domain into three parts: the smooth region if $\gamma\leq \frac{1}{2}$, the transition region if $\frac{1}{2} \leq \gamma \leq 1$ and the non-smooth region if $\gamma\geq 1$.

\textbf {Step 3.\ }
We use linear reconstruction for the smooth region to reduce the computational cost and WENO-Z  reconstruction \cite{castro2011high} for the non-smooth region. More importantly, a linear interpolation is applied to blend the linear and WENO-Z reconstruction in the transition region.
To be specific, denoting $\varphi^L,\varphi^N,\varphi^T$ to be the nonlinear weights in the smooth, non-smooth and transition region respectively and they are chosen by:\begin{equation}\label{omegajjj}
 \begin{aligned}
 & \varphi^L_r(x) = d_r^n(x) \\
  & \varphi^N_r(x) = \dfrac{\alpha_r^n(x)}{ \sum  _{s=0}^2\alpha_s ^n(x)},\ \alpha_r^n(x)=d_r^n(x)\left(1+\left(\dfrac{\tau}{\beta_r^n+\varepsilon}\right)^2\right),\\
   & \varphi^T_r(x)= M(\gamma)\ \varphi^N_r(x) + (1-M(\gamma))\ \varphi^L_r(x),
   \end{aligned}
 \end{equation}
where the weight function is taken as $M(\gamma) = 2\gamma-1$. Note that we only take the WENO-Z reconstruction as an example, it can be replaced by other reconstruction procedure.

\textbf {Step 4.\ }
The final fifth-order WENO hybrid reconstruction is written as a combination of all the reconstructed polynomials
\begin{equation}\label{f-}
\mathcal R \left( x,\left\{ \bar u_i^n\right\}_{i\in S_j^n}\right) =u_j^{n}(x)=\sum \limits _{r=0}^2\varphi_r^n(x) u^{n}_r(x).
\end{equation}

Notice that all the coefficients  $ c_{ri}^n$, $d_r^n$ and the smoothness indicators $\beta_r^n$ are calculated on non-uniform grids and change at any moment. Furthermore,  the linear weights on moving meshes or two-dimensional cases may become negative. Therefore a special treatment \cite{shi2002technique} is of necessity.

For  systems of  hyperbolic conservation laws, we prefer to use the local characteristic decomposition \cite{jiang1996efficient,roe1981approximate}, which is more robust and resulting less oscillations for strong discontinuities than a component-wise version. Additionally, for well-balanced schemes, $\alpha_r^n(x)$ in (\ref{omegajjj}) is modified by
\begin{equation}\label{alphamodified}
 \alpha_r^n(x) = d_r^n(x)\left(1+\left(\dfrac{\tau}{\sum\nolimits_{i=1}^{d}\beta_r^{(i)}+\varepsilon}\right)^2\right), \ r=0,1,2,
\end{equation}
in order to make sure that the coefficient $a_k^n(x)$ in (\ref{f-}) is the same for each of the conservation variable. Here $\beta_r^{(i)}$ is the $i$-th component of the smoothness indicator and $d$ is the number of  equations.

\subsection{The geometric conservation law}
The geometric conservation law (GCL) and the discrete geometric conservation law (D-GCL) play a significant role in the stability and high-order accuracy of numerical schemes in mesh deformation methods.
If a semi-discrete method has the  GCL property, it has the following differential identity
\begin{equation}\label{gclf}
     \dfrac{d}{dt}\displaystyle{\iint_{ \mathcal  T_{ij}(t)} \ dxdy} = \displaystyle{\iint_{ { \mathcal  T_{ij}(t)}} \nabla \cdot \boldsymbol w \ dxdy},
\end{equation}
where $\boldsymbol w$ is the global  grid velocity defined in (\ref{wglobal2D}).
In other words, if the initial condition of the equation is a constant, the numerical solution will be exactly equal to the constant within machine error. The D-GCL property which can  maintain constant states as well is aimed at fully-discrete schemes. More details can be referred in \cite{guillard2000significance}.
Klingenberg et al. \cite{klingenberg2017arbitrarya}  proved that the ALE-DG method satisfies the GCL and D-GCL property for one-dimensional conservation law under the same mesh velocity setup, and a modified SSP-RK method which satisfies the  D-GCL property was developed in \cite{fu2019arbitrary} for multi-dimensional cases. The D-GCL property is also essential for the well-balanced preservation of the ALE-DG schemes for the shallow water equations \cite{zhang2021positivity}. The same result holds for our ALE-WENO scheme since the piecewise constant ALE-DG scheme is precisely the first-order finite volume scheme. Thus, we omit the details here.

\section{The finite volume ALE-WENO  scheme}\label{se:ale}
In this section, we introduce the fifth-order finite volume ALE-WENO  scheme for the shallow water equations. Here we restrict ourselves to two dimensions.
\begin{equation}\label{2Dsha}
    \left\{\begin{array}{l}
    h_t+(hu)_{x}+(hv)_{y}=0, \\
    (hu)_t+(hu^2+\dfrac{1}{2}gh^2)_x+(huv)_y = -ghb_x,\\
    (hv)_t+(huv)_x+(hv^2+\dfrac{1}{2}gh^2)_y = -ghb_y.
    \end{array}\right.
 \end{equation}
Taking the GCL property into account, we follow the same  idea as \cite{arpaia2018r,zhang2021positivity} to transform conservation variables $\boldsymbol u=(h,hu,hv)^T$  into equilibrium variables $\boldsymbol q =(\eta,hu,hv)^T$ with $\eta = h+b$ for  better computation on moving meshes. Here the superscript $T$ denoting the transpose.
Following the notation above, we rewrite the shallow water equations  (\ref{2Dsha}) as follows:
 \begin{equation}\label{resha2D}
\begin{aligned}
\boldsymbol q_t + \boldsymbol f_1(\boldsymbol q)_x + \boldsymbol f_2(\boldsymbol q)_y = \boldsymbol r(\boldsymbol q,b),
 \end{aligned}
 \end{equation}
where
$$\boldsymbol f_1(\boldsymbol q) = \left[\begin{array}{c}
                 hu    \\
                 \frac{(hu)^2}{\eta-b}+\frac{1}{2}g(\eta-b)^2\\
                 \frac{(hu)(hv)}{\eta-b}
                 \end{array} \right],
\boldsymbol f_2(\boldsymbol q) = \left[\begin{array}{c}
                 hv    \\
                 \frac{(hu)(hv)}{\eta-b}\\
                 \frac{(hv)^2}{\eta-b}+\frac{1}{2}g(\eta-b)^2
                 \end{array} \right],
\boldsymbol r(\boldsymbol q,b) =\left[\begin{array}{c}
                 0    \\
                 -g(\eta-b)b_x\\
                 -g(\eta-b)b_y
                 \end{array} \right]. $$
\subsection{The semi-discrete ALE-WENO  scheme}
For any time-dependent  quadrilateral cells,  a bilinear mapping can be defined to transform it into a time-independent reference cell
\begin{equation}\label{mappingbilinear}
  \mathcal{X}_{\mathcal T(t)}: \widetilde{\mathcal T}_{\text{ref}}  \rightarrow \overline{\mathcal T(t)}, \quad  (\xi,\zeta) \rightarrow  (x(t),y(t)) = \mathcal{X}_{\mathcal T(t)}(\xi,\zeta,t).
\end{equation}
Denoting $\mathcal{A}_{\mathcal T(t)}$ to be the Jacobian matrix of the mapping $\mathcal{X}_{\mathcal T(t)}$ and
$$ \mathcal{J}_{\mathcal T(t)} = |\text{det}(\mathcal{A}_{\mathcal T(t)})|\geq 0  $$
to be the absolute value of the corresponding determinant. We notice that  $\mathcal{A}_{\mathcal T(t)}$ and $\mathcal{J}_{\mathcal T(t)}$ here  depend on spatial and temporal variables. Some tedious manipulation yields
\begin{equation}\label{c0}
  c_0\Delta_{ij}(t) \leq\mathcal{J}_{\mathcal T_{ij}(t)}\leq c_1\Delta_{ij}(t),
\end{equation}
where the constants $c_0,c_1$ depend on the deformation of mesh grids. Moreover, the bilinear mapping gives a characterization of the grid velocity
\begin{equation}\label{velcha}
\boldsymbol w(x,y,t) = \partial_t\left(\mathcal{X}_{\mathcal T(t)}(\xi,\zeta,t)\right).
\end{equation}

Based on it, we present a  derivation of our semi-discrete ALE-WENO scheme briefly. Let $\widetilde { \boldsymbol q} =  \boldsymbol q\circ \mathcal{X}_{\mathcal T(t)}$ be the variables in the reference element.
The chain rule formula provides
\begin{equation}\label{dustar}
 \dfrac{\partial} {\partial t}\widetilde { \boldsymbol q} =\dfrac{\partial} {\partial t}{ \boldsymbol q} + {\boldsymbol w} \cdot \nabla  \boldsymbol q,
\end{equation}
and from the  GCL identity (\ref{gclf}) we have
\begin{equation}\label{djaco}
  \dfrac{\partial}{\partial t}\mathcal{J}_{\mathcal T(t)} = \mathcal{J}_{\mathcal T(t)} \nabla \cdot  {\boldsymbol w}.
\end{equation}
Summing (\ref{dustar}) which pre-multiplied by  $\mathcal{J}_{\mathcal T(t)}$  to (\ref{djaco}) pre-multiplied by  $\widetilde { \boldsymbol q} $ gives
\begin{equation}\label{differential}
  \dfrac{\partial}{\partial t}\left(\mathcal{J}_{\mathcal T(t)} \widetilde { \boldsymbol q}\right) = \mathcal{J}_{\mathcal T(t)}\boldsymbol r(\boldsymbol q,b) -   \mathcal{J}_{\mathcal T(t)} \nabla \cdot \boldsymbol {\mathcal H}(\boldsymbol w,\boldsymbol q),
\end{equation}
where the flux term $$\boldsymbol {\mathcal H}(\boldsymbol w,\boldsymbol q) = \boldsymbol F(\boldsymbol q) - \boldsymbol w\boldsymbol q = \left(\boldsymbol f_1(\boldsymbol q)-  w^x \boldsymbol q, \boldsymbol f_2(\boldsymbol q)-  w^y \boldsymbol q\right).$$
Integrating (\ref{differential})  over a reference element $\widetilde{\mathcal T}_{\text{ref}}$ leads to
\begin{align*}
 \displaystyle{\iint_ {\widetilde{\mathcal T}_{\text{ref}}} \dfrac{\partial}{\partial t}\left(\mathcal{J}_{\mathcal T(t)} \widetilde { \boldsymbol q}\right) \ d\xi d\zeta } = -   \displaystyle{\iint_ {\widetilde{\mathcal T}_{\text{ref}}} \nabla \cdot \boldsymbol {\mathcal H}(\boldsymbol w,\boldsymbol q) \mathcal{J}_{\mathcal T(t)}  \ d\xi d\zeta } + \displaystyle{\iint_ {\widetilde{\mathcal T}_{\text{ref}}}\boldsymbol r(\boldsymbol q,b)\mathcal{J}_{\mathcal T(t)}  \ d\xi d\zeta },
\end{align*}
which can be eventually transformed into the  following ALE integral formulation
\begin{equation}\label{integralphy}
 \dfrac{d}{dt}\displaystyle{\iint_ {{\mathcal T_{ij}(t)}}{ \boldsymbol q} \ dxdy } = -   \displaystyle{\iint_ {{\mathcal T_{ij}(t)}} \nabla \cdot \boldsymbol {\mathcal H}(\boldsymbol w,\boldsymbol q)  \ dxdy } + \displaystyle{\iint_ {{\mathcal T_{ij}(t)}} \boldsymbol r(\boldsymbol q,b)  \ dxdy }.
\end{equation}
Denoting $ \overline{\boldsymbol  q}_{ij}(t) $ to be the approximation of the cell average of the function ${\boldsymbol q}(\cdot,\cdot,t)$ over the time-dependent domain $\mathcal T_{ij}(t)$, that is
\begin{equation}\label{cellave2D}
\overline {\boldsymbol q}_{ij}(t) \approx \overline {\boldsymbol q}(x_i,y_j,t)= \dfrac{1}{\Delta_{ij}(t)}\displaystyle{\iint_{\mathcal T_{ij}(t)}  \boldsymbol q(x,y,t)  \ dxdy}.
\end{equation}
For  each quadrilateral cells $\mathcal T_{ij}(t)$, the boundary can be expressed in terms of
$$\partial\mathcal T_{ij}(t) = \bigcup\nolimits  _{l=1}^{4} {e_{ij}^{l}(t)}.$$
Hence we have the following conservative semi-discrete finite volume ALE-WENO  scheme:
\begin{equation}\label{se2D}
\begin{aligned}
  &\dfrac{d}{dt} \left(\Delta_{ij}(t) \overline{\boldsymbol  q}_{ij}(t) \right)  = \text{RHS}_{ij}(\boldsymbol w,\overline {\boldsymbol q},t)
\end{aligned}
\end{equation}
with
\begin{equation}\label{s2D}
\begin{aligned}
   &\text{RHS}_{ij}(\boldsymbol w,\overline {\boldsymbol q},t) = -\sum _{l=1}^4 \displaystyle{\int_{e_{ij}^{l}(t)} \widehat {\boldsymbol {\mathcal H}}(\boldsymbol w,\boldsymbol q^{-},\boldsymbol q^{+}, \boldsymbol n_l) \ ds } + \displaystyle{\iint_{ \mathcal  T_{ij}(t)} \boldsymbol r(\boldsymbol q,b)  \ dxdy},
\end{aligned}
\end{equation}
where ${\boldsymbol q}^{-}$ is the reconstructed value obtained by the high order WENO hybrid reconstruction procedure in Section \ref{subse:weno} on the cell $\mathcal T_{ij}(t)$ at a gauss point
and ${\boldsymbol  q}^{+}$ is acquired at the same point on the cell adjacent to the associated edge,  $\boldsymbol n_l$ is the outer normal vector. Considering  the stability of the scheme, we choose the simplest monotone Lax-Friedrichs numerical flux for  $\widehat {\boldsymbol {\mathcal H}}(\boldsymbol w,\boldsymbol q^{-},\boldsymbol q^{+}, \boldsymbol n_l)$, that is
\begin{equation}\label{alpha2D}
\begin{aligned}
  \widehat {\boldsymbol {\mathcal H}}(\boldsymbol w,\boldsymbol q^{-},\boldsymbol q^{+}, \boldsymbol n_l) &= \dfrac{1}{2}\left(\boldsymbol  {\mathcal H}(\boldsymbol w,\boldsymbol q^-)\cdot \boldsymbol n_l+ \boldsymbol {\mathcal H}(\boldsymbol w,\boldsymbol q^+)\cdot \boldsymbol n_l \right) -\dfrac{\alpha}{2}(\boldsymbol q^+ - \boldsymbol q^-),\\
  \alpha &= \max\limits_{u,v,l} \left(\left|(u-w^x,v-w^y)^T \cdot\boldsymbol n_l \right|+\sqrt{gh}\right),
\end{aligned}
\end{equation}
the maximum is taken globally in our computation.

\subsection{The fully-discrete ALE-WENO  scheme}
For hyperbolic conservation law, we adopt high order SSP-RK and multistep time discretization method \cite{gottlieb2001strong, shu1988total} to resolve the semi-discrete scheme. Herein for the satisfaction of the D-GCL property  in  multi-dimensional situation, we adopt the  modified third-order SSP-RK time discretization in  \cite{fu2019arbitrary} for  (\ref{se2D}):
 \begin{equation}\label{RK32D}
  \begin{aligned}
  & J_{\mathcal{T}_{ij}^{n,1}} \overline {\boldsymbol q}_{ij}^{n,1} = J_{\mathcal{T}_{ij}^{n}}\left( \overline {\boldsymbol q}_{ij}^{n} +  \dfrac{1}{\Delta_{ij}^{n}}\Delta t \ \text{RHS}_{ij}(\boldsymbol w^n,\overline {\boldsymbol q}^n,t_n)\right),\\
  & J_{\mathcal{T}_{ij}^{n,2}} \overline {\boldsymbol q}_{ij}^{n,2} =\dfrac{3}{4}  J_{\mathcal{T}_{ij}^{n}} \overline {\boldsymbol q}_{ij}^{n} +\dfrac{1}{4}  J_{\mathcal{T}_{ij}^{n,1}} \left( \overline {\boldsymbol q}_{ij}^{n,1} +  \dfrac{1}{\Delta_{ij}^{n,1}}\Delta t \ \text{RHS}_{ij}(\boldsymbol w^{n,1},\overline {\boldsymbol q}^{n,1},t_{n,1})\right),\\
  & J_{\mathcal{T}_{ij}^{n+1}} \overline {\boldsymbol q}_{ij}^{n+1} =\dfrac{1}{3}  J_{\mathcal{T}_{ij}^{n}} \overline {\boldsymbol q}_{ij}^{n} +\dfrac{2}{3}  J_{\mathcal{T}_{ij}^{n,2}} \left( \overline {\boldsymbol q}_{ij}^{n,2} +  \dfrac{1}{\Delta_{ij}^{n,2}}\Delta t \ \text{RHS}_{ij}(\boldsymbol w^{n,2},\overline {\boldsymbol q}^{n,2},t_{n,2})\right),
   \end{aligned}
 \end{equation}
where
  \begin{equation}\label{Jacobi2D}
  \begin{aligned}
  & J_{\mathcal{T}_{ij}^{n}} = \mathcal{J}_{\mathcal{T}_{ij}(t_{n})}, \\
  & J_{\mathcal{T}_{ij}^{n,1}} = J_{\mathcal{T}_{ij}^{n}}+\Delta t \ \nabla\cdot {\boldsymbol w^n}J_{\mathcal{T}_{ij}^{n}},\\
  & J_{\mathcal{T}_{ij}^{n,2}} = \dfrac{3}{4}  J_{\mathcal{T}_{ij}^{n}} + \dfrac{1}{4}J_{\mathcal{T}_{ij}^{n,1}}+\dfrac{1}{4}\Delta t\ \nabla\cdot \boldsymbol w^{n,1}J_{\mathcal{T}_{ij}^{n,1}},\\
  & J_{\mathcal{T}_{ij}^{n+1}} = \dfrac{1}{3}  J_{\mathcal{T}_{ij}^{n}} + \dfrac{2}{3}J_{\mathcal{T}_{ij}^{n,2}}+\dfrac{2}{3}\Delta t \ \nabla\cdot \boldsymbol w^{n,2} J_{\mathcal{T}_{ij}^{n,2}},
   \end{aligned}
 \end{equation}
and
\begin{align*}
 &t_{n,1} = t_{n+1}, t_{n,2} = \dfrac{1}{2}(t_{n}+t_{n+1}), \quad  \boldsymbol w^{n,1} = \boldsymbol w({t_{n,1}}), \boldsymbol w^{n,2} = \boldsymbol w({t_{n,2}}), \\
 &\mathcal{T}_{ij}^{n} = \mathcal{ T}_{ij}(t_{n}), \mathcal{ T}_{ij}^{n,1} = \mathcal{ T}_{ij}^{n+1} =\mathcal{ T}_{ij}(t_{n+1}) , \mathcal{ T}_{ij}^{n,2} = \mathcal{ T}_{ij}\left( \dfrac{t_n+t_{n+1}}{2}\right).
\end{align*}

\subsection{The approximation of the bottom topography}
There are two different choices for the approximation of the bottom topography. The first one is the $L^2$ projection of the exact bottom function $b(x,y)$, this approach requires us to calculate an approximate cell average  $\overline b_{ij}(t)$ on each time-dependent interval $\mathcal{T}_{ij}(t)$, i.e.
\begin{equation}\label{baverage}
  \overline b_{ij}(t)\approx    \dfrac{1}{\Delta_{ij}(t)}\displaystyle{\iint_{\mathcal{T}_{ij}(t)}   b(x,y)  \ dxdy}.
\end{equation}
It can be seen that we have to select a sufficiently accurate numerical quadrature rule.\par

An alternative way is shown in \cite{zhang2021positivity}, similarly, we use the ALE-WENO scheme to approximate the bottom topography on moving meshes. Making use of the equation
$b_t = 0$, we can get the conservation scheme for the cell average $\overline b_{ij}(t)$:
\begin{equation}\label{bottomrk}
  \begin{aligned}
  \dfrac{d}{dt}\left( \Delta_{ij}(t) \overline b_{ij}(t)\right) &= \sum _{l=1}^4 \displaystyle{\int_{e_{ij}^{l}(t)} \widehat { {\mathcal B}}(\boldsymbol w, b^{-},b^{+}, \boldsymbol n_l)  \ ds },
\end{aligned}
\end{equation}
where
$$\widehat {\mathcal B}(\boldsymbol w,b^{-},b^{+}, {\boldsymbol n_l}) = \dfrac{1}{2}\boldsymbol w \cdot  {\boldsymbol n_l}(b^{-}+b^{+}) + \dfrac{1}{2}\alpha (b^{+}-b^{-})$$
is the numerical flux with the same  $\alpha$   in (\ref{alpha2D}). The values $b^{\pm}$ are acquired by the WENO hybrid reconstruction  procedure in (\ref{recon1D}). In addition, the likewise modified third-order SSP-RK method (\ref{RK32D}) can be  used for the semi-discrete scheme (\ref{bottomrk}).\par
Comparing the two approaches above, although the first one is more accurate than the second one theoretically, it may affect the exact mass conservation and the positivity preserving property on moving meshes, especially when encountering discontinuous bottom topography.  We will use the second one to approximate the bottom topography unless otherwise specified in the following numerical examples.

\subsection{Positivity-preserving property}
The numerical schemes will produce a negative water height if we do not take special treatment at the wet and dry interfaces. The resulting system is no longer hyperbolic, and the initial value problem is ill-posed. Therefore in this subsection, we will discuss the positivity-preserving property of the ALE-WENO scheme for two-dimensional shallow water equations.

\subsubsection{Positivity-preserving property }
Assuming that $\eta_{ij}^n(x,y), b_{ij}^n(x,y)$ are the fifth-order WENO hybrid reconstruction polynomials defined in domain $\mathcal T_{ij}^n$. We denote $h_{ij}=\eta_{ij} - b_{ij}$   for simplicity. Let
$$\widetilde h_{ij}^n(\xi,\zeta) = h_{ij}^n \circ \mathcal{X}_{\mathcal T(t)}$$
be a polynomial defined in  the reference element $\widetilde{\mathcal T}_{\text{ref}} $  (\ref{mappingbilinear}) which is chosen as a unit square.
In our computation, $p$-th order accuracy  Gauss-Lobatto quadrature rule  with $k$ points is used. Here $k$ is chosen to satisfy $k\geq {(p+2)}/{2}$.
We distinguish the Gauss-Lobatto quadrature nodes for the interval $[0,1]$ in the $\xi$ and $\zeta$ direction by denoting
$$ p_i^\xi = \left\{ \xi_{i_\beta}:\beta = 1,\cdots,k, \xi_{i_1}=0, \xi_{i_k}=1 \right\}, \quad p_j^\zeta = \left\{ \zeta_{j_\nu}:\nu = 1,\cdots,k, \zeta_{j_1}=0, \zeta_{j_k}=1 \right\}, $$
and the corresponding weights $\sigma_\beta$ and $ \sigma_\nu$  which satisfies $\sum_{\beta=1}^k  \sigma_\beta = 1, \sum_{\nu=1}^k  \sigma_\nu = 1.$
Let the variables
\begin{equation}\label{deltajn}
\begin{aligned}
\delta_{ij}^1& =\dfrac{1}{\sum_{\nu=2}^{k-1}  \sigma_\nu}\left( \dfrac{1}{\Delta_{ij}^n} \sum\limits_{\nu=2}^{k-1}  \sigma_\nu \left( \sum\limits_{\beta=1}^k  \sigma_\beta \widetilde h^-_{i_\beta,j_\nu} \mathcal{J}(\xi_{i_\beta},\zeta_{j_\nu}) \right) \right),\\
\delta_{ij}^2 & =  \dfrac{1}{\sum_{\beta=2}^{k-1} \sigma_\beta } \left(\dfrac{1}{\Delta_{ij}^n} \sum\limits_{\beta=2}^{k-1}  \sigma_\beta \left( \sum\limits_{\nu=1}^k  \sigma_\nu \widetilde h^-_{i_\beta,j_\nu} \mathcal{J}(\xi_{i_\beta},\zeta_{j_\nu}) \right)\right).
 \end{aligned}
\end{equation}

All the quadrature and the positivity-preserving limiter will be performed in a time-independent reference cell. We only show the positivity preservation property for the modified  Euler forward time discretization. The validity of the proposition with the modified third-order SSP-RK method can be acquired similarly.
\begin{property}[\textbf{Cell average positivity preserving}]\label{propp1}
We consider the  semi-discrete ALE-WENO  scheme (\ref{se2D}), (\ref{bottomrk}) satisfied by the cell average of the surface level $\eta$ and the bottom topography $b$, combined with the modified Euler Forward time discretization, see the first equation in (\ref{RK32D}). Let $\delta_{ij}^1,\delta_{ij}^2$ be defined in (\ref{deltajn}), if $\widetilde h^-_{i_\beta,j_1}$, $\widetilde h^-_{i_\beta,j_k}$, $\widetilde h^-_{i_1,j_\nu}$, $\widetilde h^-_{i_k,j_\nu}$, $\delta_{ij}^1,\delta_{ij}^2$ are all non-negative for all the $\beta,\nu,i,j$ at time $t_n$, then the cell average of $\bar \eta_{ij}^{n+1}-\bar b_{ij}^{n+1}$ is  non-negative under the CFL condition
\begin{equation}\label{CFLcondition}
  \max\left\{ \sigma_1 \dfrac{J_{\mathcal{T}_{ij}(t)}}{\Delta _{ij}(t)} |\nabla\cdot {\boldsymbol w(t)}| +  \dfrac{1}{\Delta _{ij}(t)}\sum_{l=1}^4 |e_{ij}^l(t)|\alpha: t\in [t_n,t_{n+1}]\right\} \Delta t \leq c_0 \sigma_1,
\end{equation}
where $\alpha$ is chosen the same as (\ref{alpha2D}) and  $c_0$ given in (\ref{c0}).
\end{property}
\begin{proof}
The proof of Proposition \ref{propp1} is shown in  Appendix \ref{a1}.
\end{proof}

\subsubsection{Bound preserving limiter}
Assuming $\bar h_{ij}^n  = \bar \eta_{ij}^{n}-\bar b_{ij}^{n}$ at time $t_n$ is nonnegative, then the modified polynomial defined in reference domain is given in the following way:
\begin{equation}\label{modifiedhjn}
  {\widetilde h}_{ij}^{n,\text{new}}(\xi,\zeta) = \Phi (\widetilde h_{ij}^n(\xi,\zeta)-\bar h_{ij}^n)+ \bar h_{ij}^n, \quad \Phi = \min\left\{1,\dfrac{\bar h_{ij}^n-\epsilon}{\bar h_{ij}^n-m_{ij}^n}\right\},
\end{equation}
where
 $$ m_{ij}^n =\min_{(\xi,\zeta)\in p_{ij}}\widetilde h_{ij}^n(\xi,\zeta) , \ p_{ij} = \left\{(\xi,\zeta): \xi \in p_{i}^\xi, \zeta \in p_{j}^\zeta \right\},\  \epsilon =\min\left\{10^{-11},\bar h_{ij}^n\right\}.$$
Noticing that in finite volume WENO scheme, if all the values at the Gauss quadrature points $\widetilde h_{i_\beta,j_\alpha}^- $ are obtained by the WENO hybrid reconstruction procedure, the cost is very high. An alternative approach proposed by \cite{xing2011high}  is  separating the limiter (\ref{modifiedhjn}) into two direction, that means in $\xi$-direction:
\begin{equation}\label{xd}
\begin{aligned}
  & {\widetilde h}_{i_\beta,j_1}^{-,\text{new}} = \Phi (\widetilde h_{i_\beta,j_1}^- -\bar h_{ij}^n)+ \bar h_{ij}^n,  \ {\widetilde h}_{i_\beta,j_k}^{-,\text{new}} = \Phi (\widetilde h_{i_\beta,j_k}^--\bar h_{ij}^n)+ \bar h_{ij}^n, \\
  & \text{with} \quad m_{ij} =\min\left\{ \widetilde h_{i_\beta,j_1}^- ,\delta_{ij}^1,\widetilde h_{i_\beta,j_k}^- \right\},
\end{aligned}
\end{equation}
and in $\zeta$-direction:
\begin{equation}\label{yd}
\begin{aligned}
 & {\widetilde h}_{i_1,j_\nu}^{-, \text{new}} = \Phi (\widetilde h_{i_1,j_\nu}^- -\bar h_{ij}^n)+ \bar h_{ij}^n,  \  {\widetilde h}_{i_k,j_\nu}^{-,\text{new}} = \Phi (\widetilde  h_{i_k,j_\nu}^--\bar h_{ij}^n)+ \bar h_{ij}^n, \\
  &\text{with} \quad m_{ij} =\min\left\{ \widetilde h_{i_1,j_\nu}^- ,\delta_{ij}^2,\widetilde h_{i_k,j_\nu}^- \right\}.
  \end{aligned}
\end{equation}
Similar to the bound-preserving limiter for hyperbolic conservation laws, (\ref{xd}),(\ref{yd}) can enforce the conditions of Proposition \ref{propp1}  without destroying accuracy and keeping the water height non-negative.

\section{The well-balanced finite volume ALE-WENO hybrid scheme}\label{se:wb}

In this section, we present the well-balanced finite volume ALE-WENO hybrid scheme which can possess the well-balanced property and the positivity-preserving property as well. The hydrostatic equilibrium state we are interested in is
\begin{equation}\label{still_water2D}
  u =v =  0, \quad \eta = \text{constant}.
\end{equation}

\subsection{The scheme with hydrostatic reconstruction}
\label{subse:hy}
Since the same modified third-order SSP-RK time discretization  (\ref{RK32D}) can be used for the well-balanced fully-discrete ALE-WENO hybrid scheme, we only give the well-balanced  semi-discrete ALE-WENO hybrid scheme. Before  proceeding further, we introduce a technique  on the reconstruction of the bottom topography $b$ briefly.
We adopt the similar approach used in \cite{xing2006highfv} with slightly modification.
\begin{itemize}
\setlength{\parskip}{0.0em}
  \item [1.] Different from the reconstruction values of the  ALE-WENO scheme in Section \ref{se:ale}, here we apply the WENO hybrid reconstruction procedure (\ref{f-}) to the conservation variables  $ {\boldsymbol  u}$ instead of the equilibrium variables $ {\boldsymbol q}$, we have
       \begin{equation}\label{urec}
          \boldsymbol u^{-} = \mathcal{R}^{-}(\bar {\boldsymbol u}), \quad
          \boldsymbol u^{+} = \mathcal{R}^{+}(\bar {\boldsymbol u}),
       \end{equation}
      Noticing that WENO nonlinear weights in (\ref{omegajjj}) depend on the cell average  $\bar{\boldsymbol u}$ nonlinearly.
  \item [2.] We use the same nonlinear weights on $\boldsymbol  b= (b,0,0)^T$ to reconstruct $ \boldsymbol  b^{\pm}$   and then $ \boldsymbol  q^{\pm}=\boldsymbol  u^{\pm} +\boldsymbol  b^{\pm}$,  from which we can easily arrive at 
      \begin{equation}\label{etahubar}
         \eta^+=\eta^-=\text{constant}:=c,\quad  (hu)^{\pm}=(hv)^{\pm}=0.
      \end{equation}
   It is a crucial technique for our construction of well-balanced schemes.
\end{itemize}

The key idea of the well-balanced semi-discrete ALE-WENO hybrid scheme with hydrostatic reconstruction  is a special treatment of the numerical flux. Following the procedure in \cite{xing2006new}, we have
\begin{equation}\label{hydro2D}
\dfrac{d}{dt}\left( \Delta_{ij}(t)\overline {\boldsymbol q}_{ij}(t) \right)
  =  -\sum _{l=1}^4 \displaystyle{\int_{e_{ij}^{l}(t)} \widehat {\boldsymbol {\mathcal H}}^{\text{mod}}(\boldsymbol w,\boldsymbol q^{-},\boldsymbol q^{+}, \boldsymbol n_l)\ ds } + \displaystyle{\iint_{\mathcal T_{ij}(t)} \boldsymbol r(\boldsymbol q,b)  \ dxdy},
\end{equation}
where
\begin{equation}\label{H2D}
\widehat {\boldsymbol {\mathcal H}}^{\text{mod}} = \widehat {\boldsymbol {\mathcal H}}(\boldsymbol w,\boldsymbol q^{*,-},\boldsymbol q^{*,+},\boldsymbol n_l) +\left(\begin{array}{c}
                 \vartheta    \\
                 \left(\dfrac{1}{2}g(\eta^{-} - b^-)^2 - \dfrac{1}{2}g(h^{*,-})^2\right) n_{l_1}\\
                 \left(\dfrac{1}{2}g(\eta^{-} - b^-)^2 - \dfrac{1}{2}g(h^{*,-})^2\right) n_{l_2}
                 \end{array} \right)
\end{equation}
with
\begin{equation}\label{HH2D}
\begin{aligned}
  & \boldsymbol n_l = (n_{l_1},n_{l_2}), \quad  \boldsymbol q^{*,\pm} = \left(\eta^{*,\pm},  hu^{\pm}, hv^{\pm} \right)^T, \\
  & \eta^{*,\pm} =  h^{*,\pm} + b^{\pm}, \ h^{*,\pm} = \max (0, \eta^{\pm} - \max(b^{+}, b^{-})),\\
  & \vartheta = \dfrac{\boldsymbol w\cdot \boldsymbol n_l +\alpha}{2}(\eta^{*,+}-\eta^+)+  \dfrac{\boldsymbol w\cdot \boldsymbol n_l-\alpha}{2}(\eta^{*,-}-\eta^-).
\end{aligned}
\end{equation}
To maintain the hydrostatic equilibrium state for the one-dimensional scheme, the source term's approximation is acquired by interpolating fourth-order polynomials $\eta_h(x)$ and $b_h(x)$ on the cell $\mathcal T_j^n$. Then an accurate enough integration quadrature is adopted, referring \cite{xing2006new} for more details. The two-dimensional case is similar.

\subsection{The scheme with special source term treatment}
The well-balanced semi-discrete ALE-WENO hybrid scheme with a special decomposition of the source term is given by
\begin{equation}\label{special2D}
 \dfrac{d}{dt}\left( \Delta_{ij}(t)\overline {\boldsymbol q}_{ij}(t) \right)
  = -\sum _{l=1}^4 \displaystyle{\int_{e_{ij}^{l}(t)} \widehat {\boldsymbol {\mathcal H}}(\boldsymbol w,\boldsymbol q^{-},\boldsymbol q^{+}, \boldsymbol n_l) \ ds } +  { \boldsymbol r}_{ij},
\end{equation}
where the modified source term is defined as follows:
\begin{equation}\label{source2}
\begin{aligned}
& {\boldsymbol r}_{ij}^{(2)} = \dfrac{g}{2}\sum _{l=1}^4 \displaystyle{\int_{e_{ij}^{l}(t)} (\widehat {b^2} ) n_{l_1}  \ ds } - g \bar \eta_{ij}\sum _{l=1}^4 \displaystyle{\int_{e_{ij}^{l}(t)} \widehat {b}  n_{l_1}  \ ds } + r_{ij}^{(2)},\\
& {\boldsymbol r}_{ij}^{(3)} = \dfrac{g}{2}\sum _{l=1}^4 \displaystyle{\int_{e_{ij}^{l}(t)} (\widehat {b^2} ) n_{l_2}  \ ds } - g \bar \eta_{ij}\sum _{l=1}^4 \displaystyle{\int_{e_{ij}^{l}(t)} \widehat {b}  n_{l_2}  \ ds } + r_{ij}^{(3)},\\
\end{aligned}
\end{equation}
with
\begin{equation}\label{bhat2D}
\begin{aligned}
 & (\widehat {b^2}) = \dfrac{1}{2}\left( (b^-)^2 + (b^+)^2\right), \quad \widehat {b} =  \dfrac{1}{2}\left( b^- + b^+\right).
\end{aligned}
\end{equation}
Here ${\boldsymbol q}^{\pm}$, $b^{\pm}$ are obtained the same as Section \ref{subse:hy} and  $r_{ij}^{(2)},r_{ij}^{(3)}$ are the high order approximation to the integral
$$r_{ij}^{(2)} = \displaystyle{\iint_{\mathcal T_{ij}(t)} -g(\eta - \bar \eta_{ij})b_x  \ dxdy},\  r_{ij}^{(3)} = \displaystyle{\iint_{\mathcal T_{ij}(t)} -g(\eta - \bar \eta_{ij})b_y  \ dxdy}.$$
Taking $r_{ij}^{(2)}$ as an example, by choosing the Gauss-Lobatto points $( x_{i_\mu}, y_{j_\upsilon}) $ and the associated weights $\omega_{\mu\upsilon}$ in the quadrilateral cell $\mathcal T_{ij}$, we have\begin{equation}\label{gausssij2}
  r_{ij}^{(2)} = -\sum\limits _{\mu,\upsilon}\omega_{\mu\upsilon}g(\eta_{i_\mu,j_\upsilon} -  \bar \eta_{ij})b_x( x_{i_\mu},y_{j_\upsilon}).
\end{equation}
For the sake of the well-balanced property,  those values at the Gauss points are obtained by the WENO hybrid reconstruction procedure (\ref{recon1D}). We can deal with $r_{ij}^{(3)}$ in the same way.

\subsection{Well-balanced and positivity-preserving property}
We show that both the well-balanced semi-discrete and the fully-discrete finite volume ALE-WENO hybrid schemes have the well-balanced property, and analogous to the finite volume ALE-WENO  scheme,  they have the positivity-preserving property as well.

\subsubsection{Well-balanced property}
\begin{lemma}
The finite volume semi-discrete ALE-WENO hybrid scheme with hydrostatic reconstruction (\ref{hydro2D}-\ref{HH2D}) and the scheme  with special source term treatment (\ref{special2D}-\ref{gausssij2}) are all exact for the hydrostatic equilibrium state  (\ref{still_water2D})
and possess high order accuracy. What's more,
we have\begin{equation}\label{prowell3}
\dfrac{d}{dt}\left( \Delta_{ij}(t)\overline {\boldsymbol q}_{ij}(t) \right)
   =\displaystyle{\iint_{\mathcal T_{ij}(t)}  \nabla \cdot(\boldsymbol w\otimes \boldsymbol q )\  dxdy},
 \end{equation}
 here
 $\boldsymbol w\otimes \boldsymbol q = (w^x  \boldsymbol q \quad w^y \boldsymbol q )$ and $w^x$, $w^y$ are defined in (\ref{wglobal2D}).
\end{lemma}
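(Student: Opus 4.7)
The plan is to reduce the well-balanced claim to the identity (\ref{prowell3}) and then check it component by component for each of the two schemes. The starting point is the reconstruction observation (\ref{etahubar}): because the nonlinear WENO weights are computed from $\bar{\boldsymbol u}$ and then re-used on $\boldsymbol b$, at still water the reconstructed interface values satisfy $\eta^\pm = c$ and $(hu)^\pm = (hv)^\pm = 0$ at every Gauss point on every cell edge; the same holds cell-averaged, so $\bar\eta_{ij}(t) = c$ and $\overline{hu}_{ij}(t) = \overline{hv}_{ij}(t) = 0$ at all quadrature points inside the cell as well. Via the divergence theorem the target right-hand side simplifies to
\begin{equation*}
\iint_{\mathcal T_{ij}(t)} \nabla\cdot(\boldsymbol w \otimes \boldsymbol q)\,dxdy \;=\; \sum_{l=1}^4 \int_{e_{ij}^l(t)} (\boldsymbol w\cdot\boldsymbol n_l)\,\boldsymbol q\, ds \;=\; \Bigl(c\sum_{l=1}^4\!\int_{e_{ij}^l(t)}\!\boldsymbol w\cdot\boldsymbol n_l\,ds,\;0,\;0\Bigr)^{\!T}\!,
\end{equation*}
so the task is to show that the semi-discrete right-hand side of each scheme reproduces exactly this vector.

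For the hydrostatic reconstruction scheme (\ref{hydro2D})--(\ref{HH2D}), I would substitute $\eta^\pm=c$, $(hu)^\pm=(hv)^\pm=0$ into $\widehat{\boldsymbol{\mathcal H}}^{\text{mod}}$. In the dry-aware formula $h^{*,\pm} = \max(0,c-\max(b^+,b^-))$ immediately gives $h^{*,+} = h^{*,-}$, and with the momentum components of $\boldsymbol q^{*,\pm}$ both zero, the Lax-Friedrichs flux in the first (mass) component collapses, after adding the $\vartheta$ correction, to $-c\,\boldsymbol w\cdot\boldsymbol n_l$ — exactly the desired mass moving-mesh contribution. For the momentum components the Lax-Friedrichs part reduces to $\tfrac12 g(h^{*,\pm})^2 n_{l_k}$ while the explicit correction in (\ref{H2D}) contributes $\bigl(\tfrac12 g(\eta^--b^-)^2 - \tfrac12 g(h^{*,-})^2\bigr)n_{l_k}$; summing, the momentum numerical flux equals $\tfrac12 g(h^-)^2 n_{l_k}$ — the same value that the exact continuous flux would give using the reconstructed water height. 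The well-balanced source-term quadrature on the reconstructed interpolants of $\eta_h$ and $b_h$, designed as in \cite{xing2006new}, then cancels these momentum boundary integrals exactly; I would invoke that cancellation rather than redo it.

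For the special source term scheme (\ref{special2D})--(\ref{gausssij2}), the mass component is even simpler: with $(hu)^\pm=(hv)^\pm=0$ and $\eta^\pm=c$ the Lax-Friedrichs mass flux is $-c\,\boldsymbol w\cdot\boldsymbol n_l$ directly, and the mass source is zero, producing the first-component identity. For the $k$-th momentum component ($k=1,2$) the flux reduces to $\tfrac12 g(h^\pm)^2 n_{l_k} = \tfrac12 g(c-b^\pm)^2 n_{l_k}$, whose half-sum is $\tfrac12 g\bigl(c^2 - 2c\widehat b + \widehat{b^2}\bigr)n_{l_k}$. Summing over edges, $\sum_l \int c^2 n_{l_k}\,ds = 0$ by $\oint\boldsymbol n=\boldsymbol 0$, so the edge flux contribution equals $\tfrac g2\sum_l\int\widehat{b^2}n_{l_k}\,ds - gc\sum_l\int\widehat b n_{l_k}\,ds$, which is precisely the boundary part of ${\boldsymbol r}_{ij}^{(k+1)}$ in (\ref{source2}) once we use $\bar\eta_{ij}=c$. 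The remaining volume term $r_{ij}^{(k+1)}$ is $-g(\eta-\bar\eta_{ij})\partial_{x_k}b$ evaluated at Gauss points via the WENO reconstruction, and vanishes exactly because the reconstruction of the constant $\eta\equiv c$ gives $c$ at every quadrature point. The momentum components therefore vanish.

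The main obstacle — and really the only thing beyond careful bookkeeping — is handling the moving-mesh pieces in $\widehat{\boldsymbol{\mathcal H}}$ and in the $\vartheta$ correction, since the ALE flux contains $\boldsymbol w\otimes\boldsymbol q$ in addition to the physical flux. One must verify that all the $\boldsymbol w$-dependent terms coming out of the Lax-Friedrichs combination and out of $\vartheta$ cleanly assemble into $-c\,\boldsymbol w\cdot\boldsymbol n_l$ in the mass component without leaving residues in the momentum components; in the hydrostatic scheme this is exactly the role the star-state weights in $\vartheta$ play, and I would track them explicitly. Once the two schemes are shown to satisfy (\ref{prowell3}), the high-order accuracy claim follows because the only modifications to the non-well-balanced ALE-WENO scheme of Section~\ref{se:ale} are either algebraic identities valid for smooth equilibrium-perturbation decompositions or high-order consistent quadrature corrections, neither of which lowers the formal order of the underlying WENO hybrid reconstruction.
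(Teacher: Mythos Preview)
Your proposal is correct and follows essentially the same route as the paper: both argue component-by-component from the reconstruction identity (\ref{etahubar}), use $h^{*,+}=h^{*,-}$ in the hydrostatic scheme and the closed-contour identity $\oint\boldsymbol n\,ds=\boldsymbol 0$ in the special-source scheme, and reduce the momentum right-hand sides to zero while the mass component collapses to $-c\,\boldsymbol w\cdot\boldsymbol n_l$. The only cosmetic difference is that the paper carries out the hydrostatic momentum cancellation explicitly via the divergence theorem on the reconstructed polynomials rather than invoking the source-term design from \cite{xing2006new}, but the underlying mechanism is the one you describe.
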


\begin{proof}{\textbf{(for the scheme with hydrostatic reconstruction)}}
It is straightforward to show the high order accuracy of the fifth-order WENO hybrid reconstruction procedure. Next, we will focus on proving (\ref{prowell3}).
According to the fact that $\boldsymbol  q$ is in hydrostatic state and the bottom topography reconstruction technique
we have (\ref{etahubar}).
Here the subscript is omitted. We can calculate
\begin{align*}
\widehat {\boldsymbol {\mathcal H}}^{\text{mod},(1)} & = \widehat {\boldsymbol {\mathcal H}}^{(1)}(\boldsymbol w,\boldsymbol q^{*,-},\boldsymbol q^{*,+},\boldsymbol n_l) +  \dfrac{1}{2}(\boldsymbol w\cdot \boldsymbol n_l +\alpha)(\eta^{*,+}-\eta^+)+  \dfrac{1}{2}(\boldsymbol w\cdot \boldsymbol n_l-\alpha)(\eta^{*,-}-\eta^-)\\
 &= \dfrac{1}{2}\left( \widehat {\boldsymbol{\mathcal H}}^{(1)}(\boldsymbol  w,\boldsymbol q^{*,-})\cdot \boldsymbol n_l + \widehat {\boldsymbol{\mathcal H}}^{(1)}(\boldsymbol  w,\boldsymbol q^{*,+})\cdot \boldsymbol n_l  -\alpha(\eta^{*,+}-\eta^{*,-})\right) +\vartheta\\
&= -\dfrac{1}{2}(\boldsymbol w\cdot \boldsymbol n_l +\alpha)\eta^+ - \dfrac{1}{2}(\boldsymbol w\cdot \boldsymbol n_l-\alpha)\eta^- = -(\boldsymbol w\cdot \boldsymbol n_l)\eta^- = -(\boldsymbol w \otimes \eta^-)\cdot \boldsymbol n_l.
\end{align*}
The superscript $(1)$ means the first component of the numerical flux $\widehat {\boldsymbol {\mathcal H}}$.  Some tedious algebraic operations and (\ref{etahubar}) yield  the  third and last equality.  Likewise, due to the fact  $h^{*,-} = h^{*,+}$, we get
\begin{align*}
\widehat {\boldsymbol {\mathcal H}}^{\text{mod},(2)} & = \widehat {\boldsymbol {\mathcal H}}^{(2)}(\boldsymbol w,\boldsymbol q^{*,-},\boldsymbol q^{*,+},\boldsymbol n_l) +  \left(\dfrac{g}{2}(\eta^{-} - b^-)^2 - \dfrac{g}{2}(h^{*,-})^2\right) n_{l_1}\\
&= \dfrac{1}{2}\left( \dfrac{g}{2}(\eta^{*,-} - b^-)^2 n_{l_1} + \dfrac{g}{2}(\eta^{*,+} - b^+)^2 n_{l_1}\right) + \left(\dfrac{g}{2}(\eta^{-} - b^-)^2 - \dfrac{g}{2}(h^{*,-})^2\right) n_{l_1}\\
&= \dfrac{g}{2}(\eta^{-} - b^-)^2n_{l_1},\\
\widehat {\boldsymbol {\mathcal H}}^{\text{mod},(3)}& = \dfrac{g}{2}(\eta^{-} - b^-)^2n_{l_2}.
\end{align*}
Thence
\begin{align*}
\text{RHS}_{ij}^{(1)} &=  -\sum _{l=1}^4 \displaystyle{\int_{e_{ij}^{l}(t)} -(\boldsymbol w \otimes \eta^-)\cdot \boldsymbol n_l \ ds}= \displaystyle{\int_{\partial \mathcal T_{ij}(t)} (\boldsymbol w \otimes \eta^-)\cdot \boldsymbol n \ ds} =\displaystyle{\iint_{\mathcal T_{ij}(t)}  \nabla \cdot(\boldsymbol w\otimes \eta ) \  dxdy}, \\
\text{RHS}_{ij}^{(2)} &= -\sum _{l=1}^4 \displaystyle{\int_{e_{ij}^{l}(t)} \dfrac{g}{2}(\eta^{-} - b^-)^2n_{l_1}  \ ds} + \displaystyle{\iint_{\mathcal T_{ij}(t)} -g(\eta-b)b_x  \ dxdy} \\
&= -\displaystyle{\int_{\partial \mathcal T_{ij}(t)} \dfrac{g}{2}(\eta^{-} - b^-)^2  n_{1} \ ds}+\displaystyle{\iint_{\mathcal T_{ij}(t)} -g(\eta-b)b_x  \ dxdy} \\
& = \displaystyle{\iint_{ \mathcal T_{ij}(t)} \left(-\dfrac{g}{2}(\eta - b)^2\right)_x -g(\eta-b)b_x  \ dxdy}  = 0=\displaystyle{\iint_{\mathcal T_{ij}(t)}  \nabla \cdot(\boldsymbol w\otimes hu )\  dxdy},\\
\text{RHS}_{ij}^{(3)} & = \displaystyle{\iint_{ \mathcal T_{ij}(t)} \left(-\dfrac{g}{2}(\eta - b)^2\right)_y -g(\eta-b)b_y  \ dxdy}  = 0=\displaystyle{\iint_{\mathcal T_{ij}(t)}  \nabla \cdot(\boldsymbol w\otimes hv )\  dxdy}.
\end{align*}
The penultimate equality is owing to the second and third equation of  (\ref{2Dsha}). The last equality follows from the equilibrium state solution. Then the property (\ref{prowell3}) follows.

\bigskip

{\noindent  \textbf{(for the scheme with special source term)}}
Noting that the reconstructed values $\eta_{i_\mu,j_\upsilon}$  at the Gauss point in formula (\ref{gausssij2}) is equal to $c$, it follows that $ r_{ij}^{(2)}, r_{ij}^{(3)}$ are disappearing when  $\boldsymbol  q$ is the hydrostatic state solution. What's more, using the fact that (\ref{etahubar}) we have
\begin{align}\label{hjnew}
\widehat{\boldsymbol { \mathcal H}}(\boldsymbol w,\boldsymbol q^{-},\boldsymbol q^{+}, \boldsymbol n_l) = \dfrac{1}{2}\left(  \boldsymbol { \mathcal H}(\boldsymbol w,\boldsymbol q^-)\cdot \boldsymbol n_l+ {\boldsymbol { \mathcal H}}(\boldsymbol w,\boldsymbol q^+)\cdot \boldsymbol n_l \right).
\end{align}
By substituting (\ref{hjnew}) into (\ref{special2D}), we obtain
\begin{align*}
\text{RHS}_{ij}^{(1)} &=- \sum _{l=1}^4 \displaystyle{\int_{e_{ij}^{l}(t)} \widehat {\boldsymbol {\mathcal H}}^{(1)}(\boldsymbol w,\boldsymbol q^{-},\boldsymbol q^{+}, \boldsymbol n_l)  \ ds}  \\
& = - \sum _{l=1}^4 \displaystyle{\int_{e_{ij}^{l}(t)} \dfrac{1}{2}\left[\left( - w^x \eta^-\right)n_{l_1} +\left( - w^y \eta^-\right)n_{l_2} + \left( - w^x \eta^+\right)n_{l_1} +\left(- w^y \eta^+\right)n_{l_2}\right] \ ds} \\
&= \sum _{l=1}^4 \displaystyle{\int_{e_{ij}^{l}(t)} w^x \eta^-n_{l_1} +  w^y \eta^-n_{l_2}\ ds} = \displaystyle{\int_{\partial \mathcal T_{ij}(t)} (\boldsymbol w\otimes \eta )\cdot \boldsymbol n_l \ ds}= \displaystyle{\iint_{\mathcal T_{ij}(t)}  \nabla \cdot(\boldsymbol w\otimes \eta )\  dxdy}, \\
\text{RHS}_{ij}^{(2)} &=- \sum _{l=1}^4 \displaystyle{\int_{e_{ij}^{l}(t)} \widehat {\boldsymbol {\mathcal H}}^{(2)}(\boldsymbol w,\boldsymbol q^{-},\boldsymbol q^{+}, \boldsymbol n_l) \ ds}  +  \dfrac{g}{2}\sum _{l=1}^4 \displaystyle{\int_{e_{ij}^{l}(t)} (\widehat {b^2} ) n_{l_1}  \ ds} - g \bar \eta_{ij}\sum _{l=1}^4 \displaystyle{\int_{e_{ij}^{l}(t)} \widehat {b}  n_{l_1}  \ ds}\\
& = \sum _{l=1}^4 \displaystyle{\int_{e_{ij}^{l}(t)} -\dfrac{1}{2}\left( \dfrac{g}{2}(\eta^- - b^-)^2 n_{l_1}+ \dfrac{g}{2}(\eta^+ - b^+)^2n_{l_1}\right) +  \dfrac{g}{2} (\widehat {b^2} ) n_{l_1} -   g \bar \eta_{ij}\widehat {b}  n_{l_1}  \ ds}\\
& = \sum _{l=1}^4 \displaystyle{\int_{e_{ij}^{l}(t)} -\left[ \dfrac{g}{4}(\eta^- - b^-)^2 + \dfrac{g}{4}(\eta^+ - b^+)^2\right]n_{l_1}+  \dfrac{g}{2}\dfrac{ (b^-)^2 + (b^+)^2}{2}n_{l_1} -  g \bar \eta_{ij}\dfrac{b^-+b^+}{2}  n_{l_1}   \ ds}\\
& = \sum _{l=1}^4 \displaystyle{\int_{e_{ij}^{l}(t)} -\dfrac{g}{2}c^2 n_{l_1}   \ ds} =
  -\dfrac{g}{2}c^2\displaystyle{\int_{\partial\mathcal T_{ij}(t)}n_{1} \ ds} = 0 = \displaystyle{\iint_{\mathcal T_{ij}(t)}  \nabla \cdot(\boldsymbol w\otimes hu )\  dxdy},\\
\text{RHS}_{ij}^{(3)} &=\sum _{l=1}^4 \displaystyle{\int_{e_{ij}^{l}(t)} -\dfrac{g}{2}c^2 n_{l_2}   \ ds} =
  -\dfrac{g}{2}c^2\displaystyle{\int_{\partial\mathcal T_{ij}(t)}n_{2} \ ds} = 0=\displaystyle{\iint_{\mathcal T_{ij}(t)}  \nabla \cdot(\boldsymbol w\otimes hv )\  dxdy}.
\end{align*}
This completes the proof of property (\ref{prowell3}).
\end{proof}

Following the discrete GCL property, we have the well-balanced property of the fully-discrete ALE-WENO hybrid scheme:
\begin{theorem}
The well-balanced semi-discrete ALE-WENO hybrid scheme together with the modified third-order SSP-RK time discretization  (\ref{RK32D})  are exact for the hydrostatic equilibrium state  (\ref{still_water2D}) and possess high order accuracy.
\end{theorem}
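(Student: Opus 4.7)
The plan is to induct on the Runge--Kutta stages, combining the spatial identity from the lemma with the D-GCL property of the modified SSP-RK scheme. Assume at time $t_n$ the cell averages reproduce the hydrostatic state, i.e.\ $\overline{\boldsymbol q}_{ij}^{n}=\boldsymbol q_0:=(c,0,0)^{T}$ for every cell. I intend to show that each of $\overline{\boldsymbol q}_{ij}^{n,1}$, $\overline{\boldsymbol q}_{ij}^{n,2}$, and $\overline{\boldsymbol q}_{ij}^{n+1}$ produced by (\ref{RK32D}) is again equal to $\boldsymbol q_0$; the high-order accuracy is then inherited from the fifth-order WENO hybrid spatial reconstruction and the third-order modified SSP-RK time stepping, so I will focus on exactness.

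For the inductive step at stage~1, since the input is the constant state $\boldsymbol q_0$ I apply the preceding lemma directly to obtain
$$\text{RHS}_{ij}(\boldsymbol w^{n},\overline{\boldsymbol q}^{n},t_n)=\iint_{\mathcal T_{ij}^{n}}\nabla\cdot(\boldsymbol w^{n}\otimes\boldsymbol q_0)\,dxdy=\boldsymbol q_0\iint_{\mathcal T_{ij}^{n}}\nabla\cdot\boldsymbol w^{n}\,dxdy.$$
I then plug this into the first line of (\ref{RK32D}) and use the Jacobian update in (\ref{Jacobi2D}). The point is that the quantity $\Delta t\,\nabla\cdot\boldsymbol w^{n}\,J_{\mathcal T_{ij}^{n}}$ appearing in the D-GCL update for $J_{\mathcal T_{ij}^{n,1}}$ matches the factor multiplying $\boldsymbol q_0$ coming from the lemma, so the right-hand side of the RK update collapses to $J_{\mathcal T_{ij}^{n,1}}\boldsymbol q_0$, whence $\overline{\boldsymbol q}_{ij}^{n,1}=\boldsymbol q_0$.

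Stages~2 and~3 are handled identically. Because the stage-1 output is again $\boldsymbol q_0$, the lemma is applicable with $(\boldsymbol w^{n,1},t_{n,1})$, giving a RHS proportional to $\boldsymbol q_0$ with the same "geometric'' weight that appears in the convex combination defining $J_{\mathcal T_{ij}^{n,2}}$. The convex-combination structure of the modified SSP-RK, mirrored exactly by the convex-combination structure of the Jacobian updates, then delivers $\overline{\boldsymbol q}_{ij}^{n,2}=\boldsymbol q_0$ and finally $\overline{\boldsymbol q}_{ij}^{n+1}=\boldsymbol q_0$.

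The main obstacle is the compatibility between the integral identity furnished by the lemma and the pointwise Jacobian update in (\ref{Jacobi2D}): one must argue that the discrete ratio $\iint_{\mathcal T_{ij}^{n}}\nabla\cdot\boldsymbol w^{n}\,dxdy /\Delta_{ij}^{n}$ matches $\nabla\cdot\boldsymbol w^{n}\,J_{\mathcal T_{ij}^{n}}/J_{\mathcal T_{ij}^{n}}$ in the sense required by the update, which is precisely the content of the D-GCL property asserted in Section~\ref{se:no} (inherited from the piecewise-constant ALE-DG, i.e.\ first-order finite volume, case). Once this compatibility is invoked, the remainder of the argument is routine algebraic bookkeeping over the three RK stages.
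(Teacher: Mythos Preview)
Your proposal is correct and follows the same approach the paper indicates: the paper does not spell out a proof of this theorem but simply writes ``Following the discrete GCL property, we have the well-balanced property of the fully-discrete ALE-WENO hybrid scheme,'' relying on Lemma~4.1 plus the D-GCL of the modified SSP-RK. Your stage-by-stage argument is precisely the detailed unpacking of that remark, and your identification of the compatibility between the integral identity $\iint_{\mathcal T_{ij}}\nabla\cdot\boldsymbol w\,dxdy$ and the Jacobian updates in (\ref{Jacobi2D}) as the place where the D-GCL is invoked is exactly right.
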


\subsubsection{Positivity-preserving property}
\begin{property}
Considering the well-balanced semi-discrete ALE-WENO hybrid schemes (\ref{hydro2D}) and (\ref{special2D}), combined with the modified third-order SSP-RK time discretization  (\ref{RK32D}) and the positivity-preserving limiter (\ref{xd}),(\ref{yd}), we can get the structure-preserving  fifth-order finite volume ALE-WENO hybrid scheme under the CFL condition  (\ref{CFLcondition}).
\end{property}

An easy mathematical operation of  schemes (\ref{hydro2D})  and (\ref{special2D}) satisfied by the cell averages of the surface level gives
$$ \dfrac{d}{dt}\left( \Delta_{ij}(t)\overline {\eta}_{ij}(t) \right)= -\sum _{l=1}^4 \displaystyle{\int_{e_{ij}^{l}(t)} \widehat {\boldsymbol {\mathcal H}}^{(1)}(\boldsymbol w,\boldsymbol q^{-},\boldsymbol q^{+}, \boldsymbol n_l) \ ds}, $$
which is the same as those in the finite volume ALE-WENO scheme  (\ref{se2D}), herein the proof  will not be reproduced.
\begin{remark}
The same technique in \cite{zhang2021positivity}
\begin{align*}
 b_{ij}^{\text{new}} (x,y) = \eta_{ij} (x,y) -  h^{\text{new}}_{ij} (x,y)
\end{align*} is adopted to  update the bottom function rather than changing the surface level $\eta_{ij} (x,y)$ in order not to disturb the stationary state and $h^{\text{new}}_{ij} (x,y)$ is the modified polynomial with positivity preserving limiter (\ref{modifiedhjn}) in the quadrilateral cell ${\mathcal T_{ij}(t)}$.
\end{remark}

\section{Numerical examples}\label{se:nu}
This section implements our proposed fifth-order finite volume ALE-WENO hybrid schemes on moving meshes for one- and two-dimensional shallow water equations. In all numerical experiments, the CFL number is taken as 0.6 except for the accuracy tests and the positivity-preserving tests given in the following specific examples.
The gravitation constant $g$ is taken as 9.812 $ m/s^2 $. Since we do not concentrate on the approaches of grid movement, unless otherwise specified, the moving meshes chosen in one-dimensional examples is as follows:
\begin{equation}\label{wangge1}
 x_{j+\frac{1}{2}}(t) = x_{j+\frac{1}{2}}(0)+\dfrac{1}{3(x_{\max}-x_{\min})^2}\sin\left(\dfrac{2\pi t}{t_{\text{end}}}\right)\left(x_{j+\frac{1}{2}}(0)-x_{\max}\right)\left(x_{j+\frac{1}{2}}(0)-x_{\min}\right),
 \end{equation}
and in  two-dimensional examples
 \begin{equation}\label{wangge2}
  \begin{aligned}
 & x_{i+\frac{1}{2}}(t) = x_{i+\frac{1}{2}}(0)+0.03\sin\left(\dfrac{2\pi t}{t_{\text{end}}}\right)\sin\left( \dfrac{2\pi  x_{i+\frac{1}{2}}(0)}{x_{\max}-x_{\min}} \right)\sin\left( \dfrac{2\pi  y_{j+\frac{1}{2}}(0)}{y_{\max}-y_{\min}} \right),\\
 & y_{j+\frac{1}{2}}(t) = y_{j+\frac{1}{2}}(0)+0.02\sin\left(\dfrac{4\pi t}{t_{\text{end}}}\right)\sin\left( \dfrac{2\pi  x_{i+\frac{1}{2}}(0)}{x_{\max}-x_{\min}} \right)\sin\left( \dfrac{2\pi  y_{j+\frac{1}{2}}(0)}{y_{\max}-y_{\min}} \right),
  \end{aligned}
 \end{equation}
 where  $x_{\min},x_{\max},y_{\min},y_{\max}$ are the vertices of the computational domain in  one- or two-dimensional and $t_{\text{end}}$ is the final time.

\subsection{One-dimensional tests}

\begin{example}{\bf Accuracy test}\label{accuracy1D}
\end{example}
This example tests the high order accuracy of the ALE-WENO hybrid schemes for a smooth solution on a domain $[0,1]$.
We choose the same bottom function and initial conditions as in \cite{xing2006new}.
 \begin{equation}\label{inismoo}
 b(x) = \sin^2(\pi x),\quad h(x,0)=5+e^{\cos(2\pi x)}, \quad  hu(x,0) = \sin(\cos(2\pi x)).
\end{equation}
We impose periodic boundary conditions and compute until  $t = 0.1$ to avoid the appearance of the shocks in the solution.
Here the CFL number is taken as 0.1 to ensure that spatial errors dominate. We treat the solution obtained by the fifth-order finite volume WENO-JS scheme on a uniform mesh consisting of 12800 grid cells as the reference solution to calculate the  $L^1$ errors and the order of accuracy for the surface level $h+b$ and the discharge $hu$.
The results using both the ALE-WENO scheme (denoted as non well-balanced scheme) (\ref{se2D}-\ref{s2D}), the well-balanced ALE-WENO schemes with hydrostatic reconstruction (\ref{hydro2D}) and special source term treatment (\ref{special2D}) are listed in Tables \ref{WENO5n1D} and \ref{WENO5h1D}. All schemes obtain the expected fifth-order accuracy on the moving mesh (\ref{wangge1}).

\begin{table}[htb]
  \centering
  \caption{  Example \ref{accuracy1D}:  $L^1$ errors for cell averages and numerical orders of accuracy by the non well-balanced ALE-WENO  scheme, with initial condition (\ref{inismoo}), $t = 0.1$.}\label{WENO5n1D}
  \begin{tabular}{ c c c c c  c c c c }
  \toprule
  \multirow{2}{*} {$N$}
    &\multicolumn{2}{c}{$h+b$}&\multicolumn{2}{c}{$ hu $}\\
   \cmidrule(lr){2-3} \cmidrule(lr){4-5}
   ~ &$L^1$ error &order&$L^1$ error &order\\
   \midrule
   25 & 9.27E{-03} &     -- &  6.71E{-02} &  --\\
   50 & 1.39E{-03} &   2.73 &  1.37E{-02} & 2.29\\
   100& 2.66E{-04} &   2.39 &  2.23E{-03} & 2.62\\
   200& 1.73E{-05} &   3.95 &  1.47E{-04} & 3.92\\
   400& 6.81E{-07} &   4.66 &  5.82E{-06} & 4.66\\
   800& 2.24E{-08} &   4.93 &  1.92E{-07} & 4.92\\
  \bottomrule
  \end{tabular}
\end{table}

\begin{table}[htb]
  \centering
  \caption{Example \ref{accuracy1D}: $L^1$ errors for cell averages and numerical orders of accuracy by the well-balanced ALE-WENO hybrid schemes, with initial condition (\ref{inismoo}), $t = 0.1$.}\label{WENO5h1D}
  \begin{tabular}{ c c c c c  c c c c }
  \toprule
   \multirow{3}{*} {$N$} &\multicolumn{4}{c}{hydrostatic reconstruction} &\multicolumn{4}{c}{special source term treatment}\\
   \cmidrule(lr){2-5} \cmidrule(lr){6-9}
    &\multicolumn{2}{c}{$h+b$}&\multicolumn{2}{c}{$ hu $}& \multicolumn{2}{c}{$h+b$}&\multicolumn{2}{c}{$ hu $}\\
   \cmidrule(lr){2-3} \cmidrule(lr){4-5} \cmidrule(lr){6-7} \cmidrule(lr){8-9}
   ~ &$L^1$ error &order&$L^1$ error &order&$L^1$ error &order&$L^1$ error &order\\
   \midrule
   25 & 9.23E{-03} &    -- & 6.48E{-02} &    -- & 1.02E{-02} &    -- & 7.26E{-02} &   -- \\
   50 & 1.36E{-03} &  2.76 & 1.35E{-02} &  2.26 & 1.40E{-03} &  2.86 & 1.35E{-02} & 2.43 \\
   100& 2.65E{-04} &  2.36 & 2.23E{-03} &  2.60 & 2.70E{-04} &  2.37 & 2.27E{-03} & 2.57 \\
   200& 1.73E{-05} &  3.94 & 1.47E{-04} &  3.92 & 1.72E{-05} &  3.97 & 1.47E{-04} & 3.95 \\
   400& 6.81E{-07} &  4.66 & 5.82E{-06} &  4.66 & 6.80E{-07} &  4.66 & 5.81E{-06} & 4.66 \\
   800& 2.24E{-08} &  4.93 & 1.91E{-07} &  4.93 & 2.23E{-08} &  4.93 & 1.91E{-07} & 4.93 \\
  \bottomrule
  \end{tabular}
\end{table}

\begin{example}{\bf Positivity-preserving test for Riemann problems}
\label{flat1D}
\end{example}
To test the positivity-preserving property of our ALE-WENO schemes on moving meshes, we give two examples introduced in \cite{holden2015front} over a flat bottom which means $b(x)=0$. The CFL number is set as 0.08 to satisfy the condition (\ref{CFLcondition}). We test the following cases with transmissive boundary conditions on moving meshes  (\ref{wangge1}) using 200 grid cells at the beginning. Notice that there is no need to use the well-balanced schemes for our simulation due to the flat bottom topography; we only illustrate the results calculated by the non well-balanced ALE-WENO scheme.

\smallskip
{\noindent \textbf{Moses's first problem}}
\smallskip

The first one is computed on domain $[-200,400]$.  The initial condition for the water height does not contain dry area originally and the initial velocity is nonzero,  given in the following way
 \begin{equation}\label{flat2}
 h(x,0)=\left\{\begin{array}{lll}
    5,&\text{if} \ \ x\leq0, \\
    10,&\text{otherwise},\\
    \end{array}\right.  \quad  u(x,0) = \left\{\begin{array}{lll}
    0,&\text{if} \ \ x\leq0 ,\\
    40,&\text{otherwise}.\\
    \end{array}\right.
\end{equation}
We can find the analytical solution in \cite{bokhove2005flooding}.
The dry region will appear when the constant initial values satisfy the criterion $\sqrt{gh_l}+\sqrt{gh_r}+u_l - u_r<0$, and then two expansion waves propagate away from each other.
The numerical solutions computed at times $t=2,4,6$ and using the analytic solutions for comparison are shown in Fig. \ref{WENOsecond}.

\smallskip
{\noindent \textbf {Moses's second problem}}
\smallskip

Another case is  Moses's second problem which involving multiple Riemann problem. On computational domain $[-300,300]$, initial conditions are given by
\begin{equation}\label{flat3}
 h(x,0)=\left\{\begin{array}{lll}
    10,&\text{for} \ \ x\leq-70, \\
    0, &\text{for} \ \ -70< x< 70,\\
    10 &\text{for} \ \  x\geq 70,\\
    \end{array}\right.  \quad  u(x,0) = 0.
\end{equation}
Before the two rarefaction waves interact, the analytic solution patches the solution of two dam-breaking problems together. The explicit expressions become difficult at some positive time when the rarefaction waves interact. Herein the results of 2500 uniform grid cells are calculated as reference solutions.
Fig. \ref{WENOmoses}  presents the solutions of the water height and the discharge at times $t=0,1,12,18$. The minimum of the water height at time $t=1$ are listed in  Table \ref{minwater}.
The two tests above show that the numerical results match the exact ones well on moving meshes with the positivity-preserving limiter. It can realize non-oscillatory and high-resolution interface tracking of fluid motion.

\begin{figure}[htbp]
  \centering
  \subfigure[water height]{
  \centering
     \includegraphics[width= 6.5cm,scale=1]{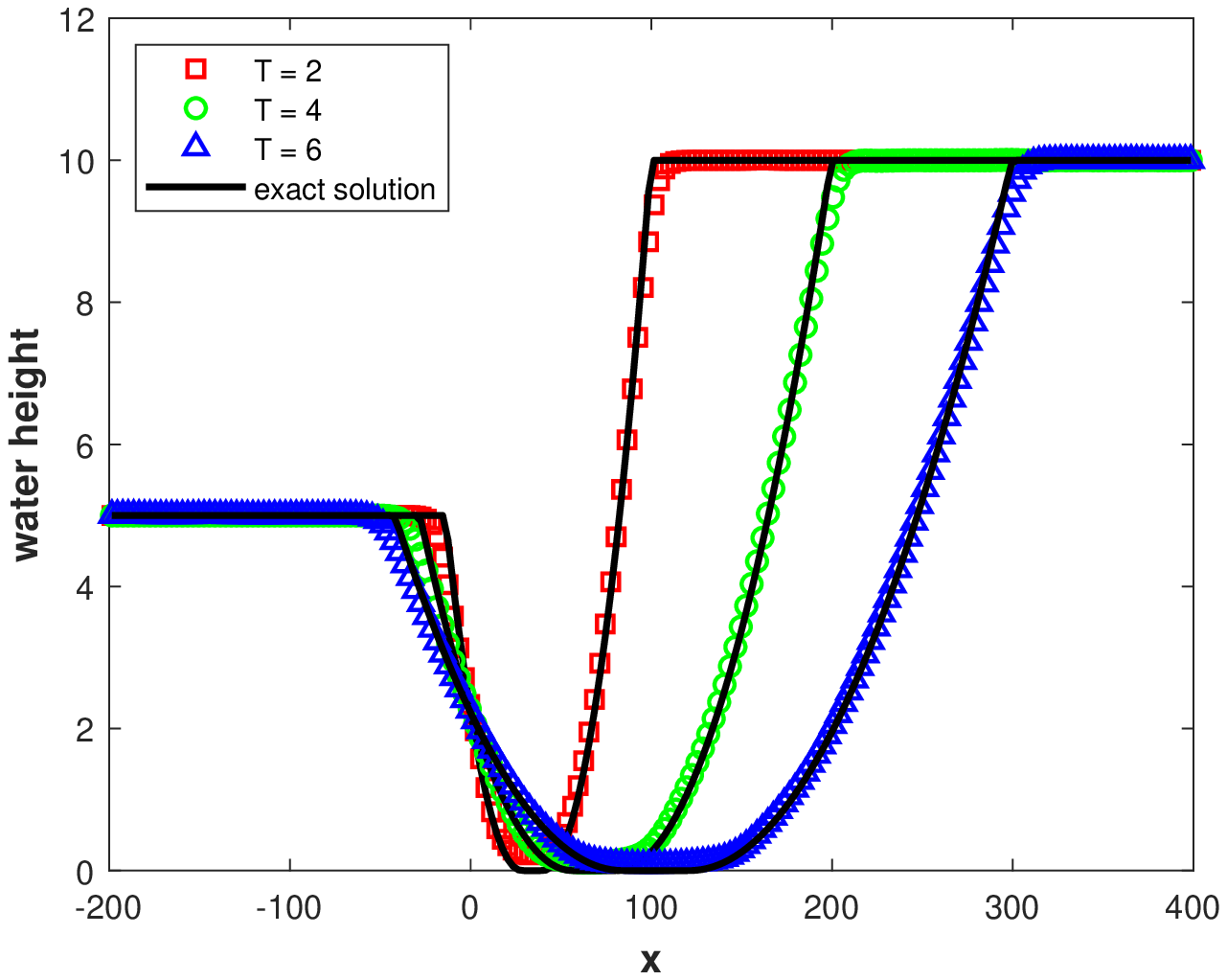}
  }
  \subfigure[the discharge]{
  \centering
     \includegraphics[width= 6.5cm,scale=1]{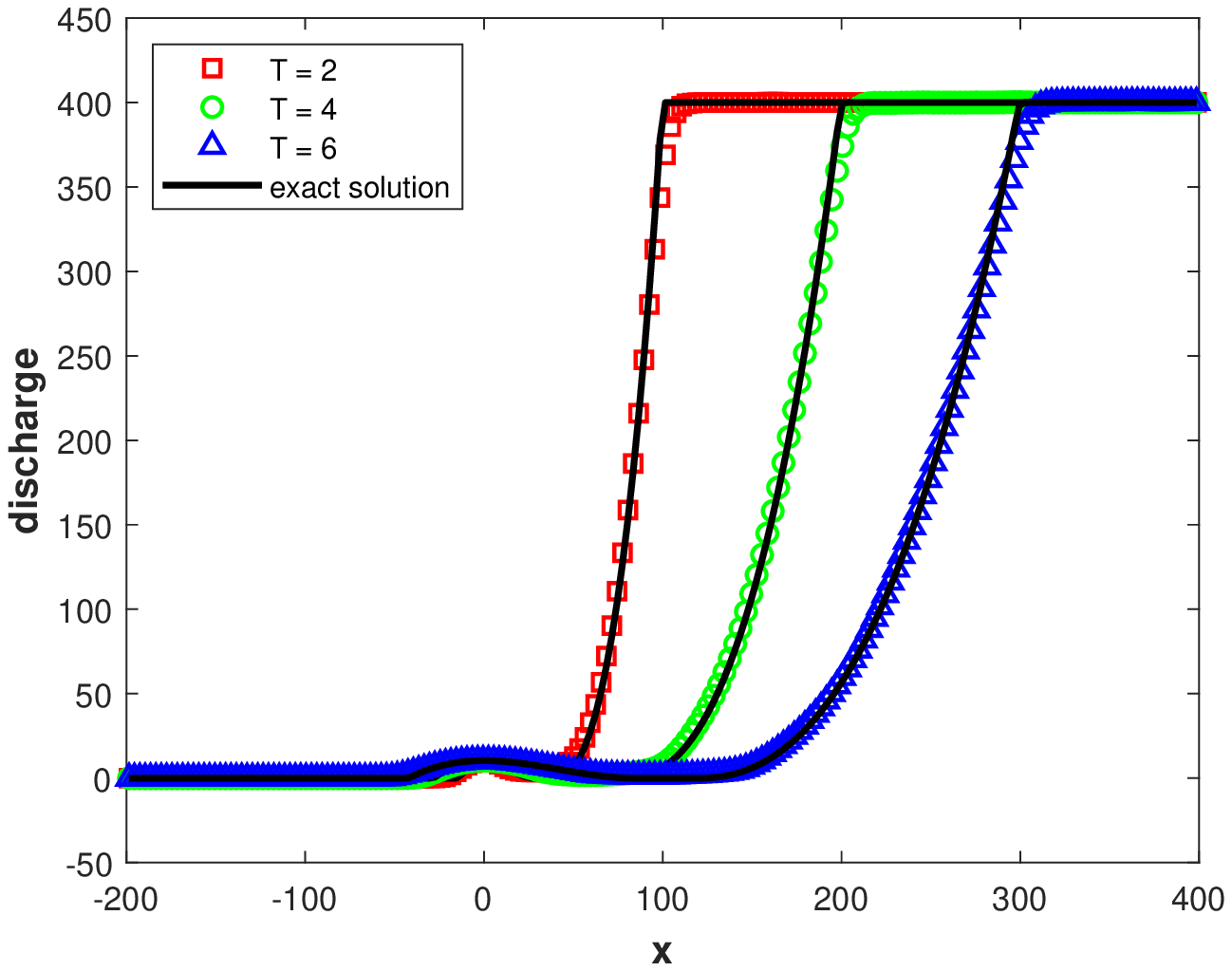}
  }
   \caption{Example \ref{flat1D}: The non well-balanced ALE-WENO scheme for Moses's first problem with initial condition (\ref{flat2}). The numerical and analytical solutions of the water height (left) and the discharge (right) with 200 moving grid cells at different times. }\label{WENOsecond}
\end{figure}

\begin{table}[htb]
  \centering
  \caption{Example \ref{flat1D}: The minimum of the water height at $t=1$ with  initial condition (\ref{flat3}).}\label{minwater}
  \begin{tabular}{c c c c c c c c }
   \toprule
   \multirow{1}{*} {Scheme} &{ non well-balanced scheme}\\
    \midrule
    $\min h$   & 1.00E{-11} \\
   \bottomrule
  \end{tabular}
\end{table}

\begin{figure}[htbp]
\setlength{\abovecaptionskip}{0.1cm}
\setlength{\belowcaptionskip}{0.0cm}
  \centering
  \subfigure[water height, $t = 0$]{
  \centering
     \includegraphics[width= 5.5cm,scale=1]{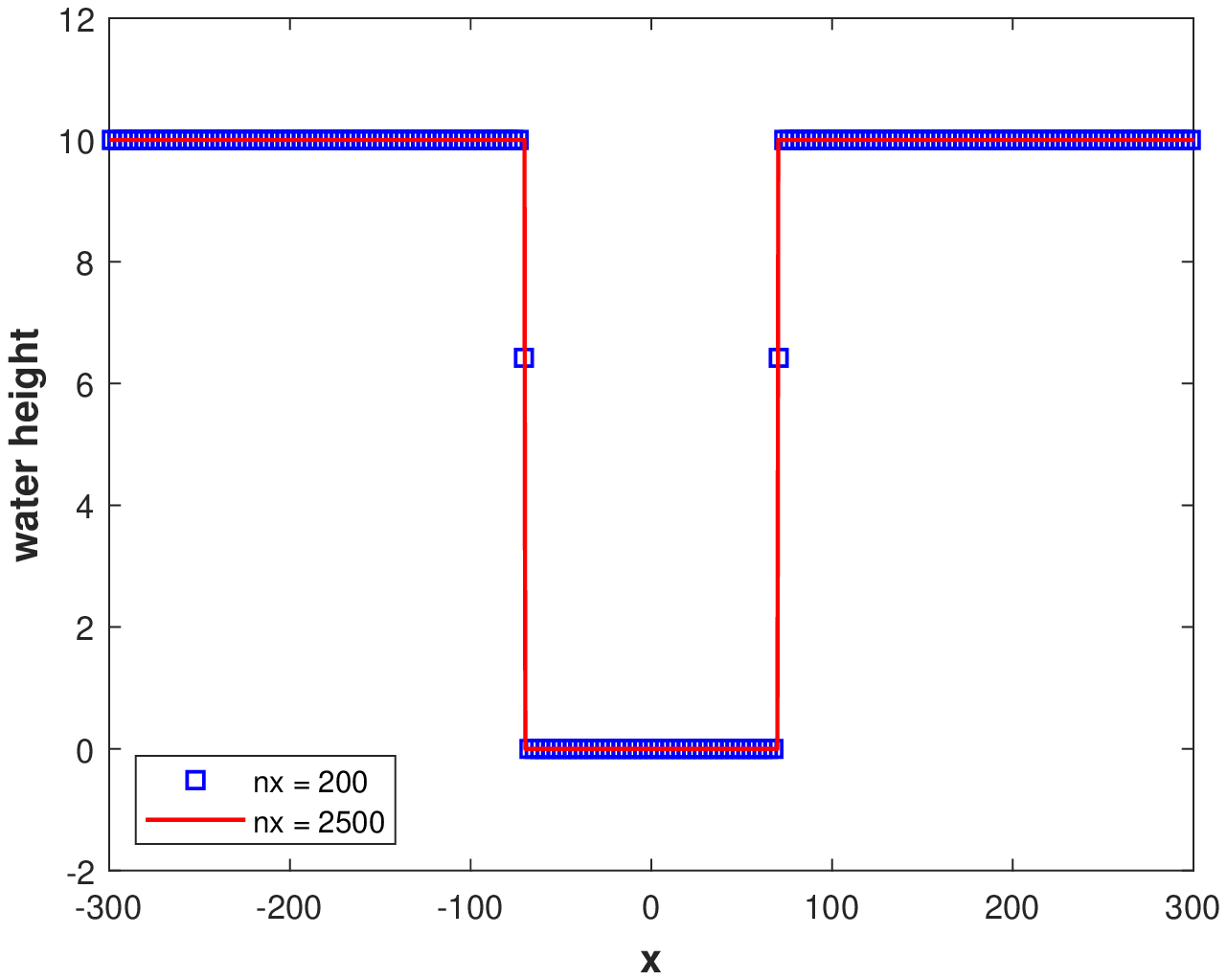}
  }
  \subfigure[the discharge, $t = 0$]{
  \centering
     \includegraphics[width= 5.5cm,scale=1]{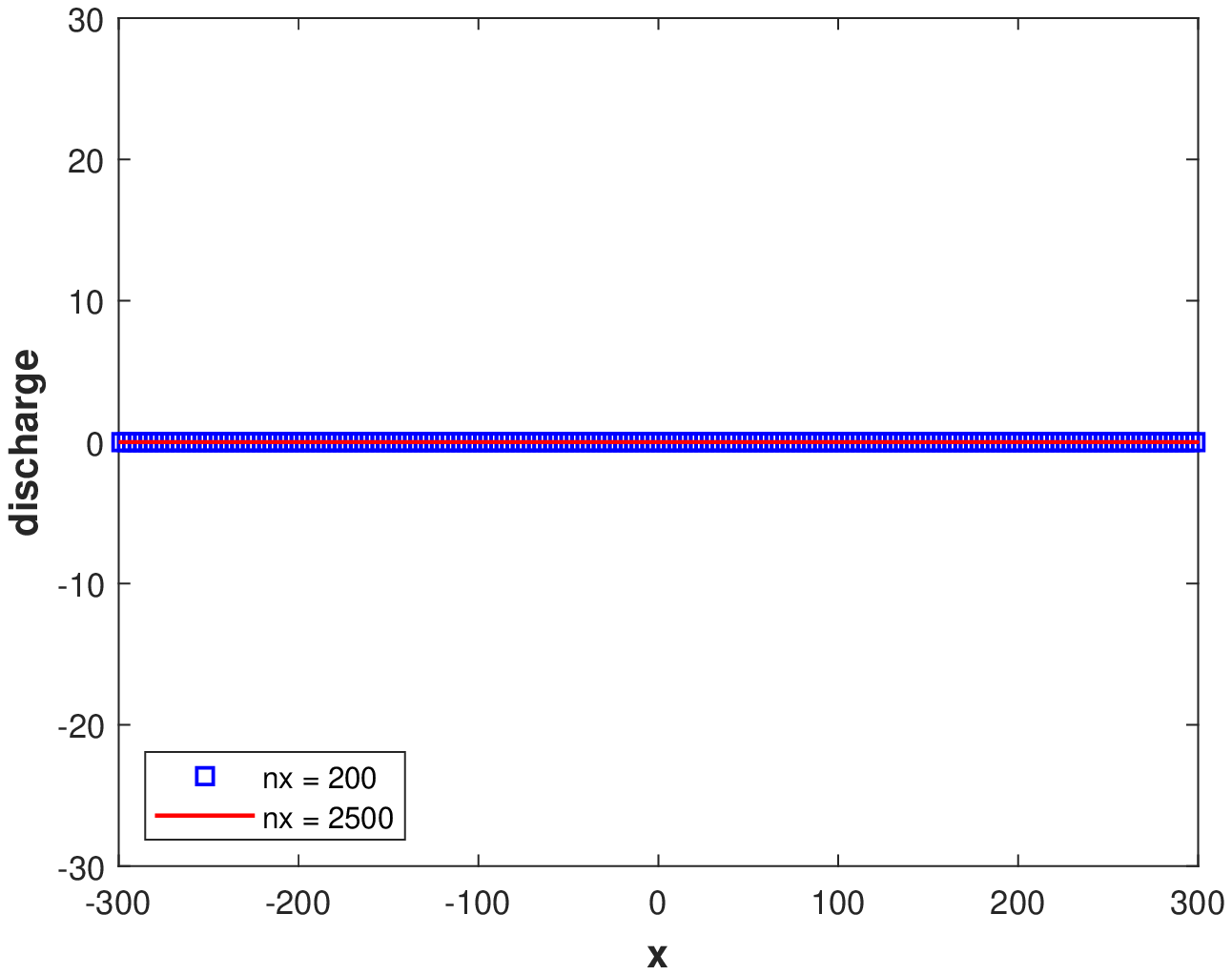}
  }
  \subfigure[water height, $t = 1$]{
  \centering
     \includegraphics[width= 5.5cm,scale=1]{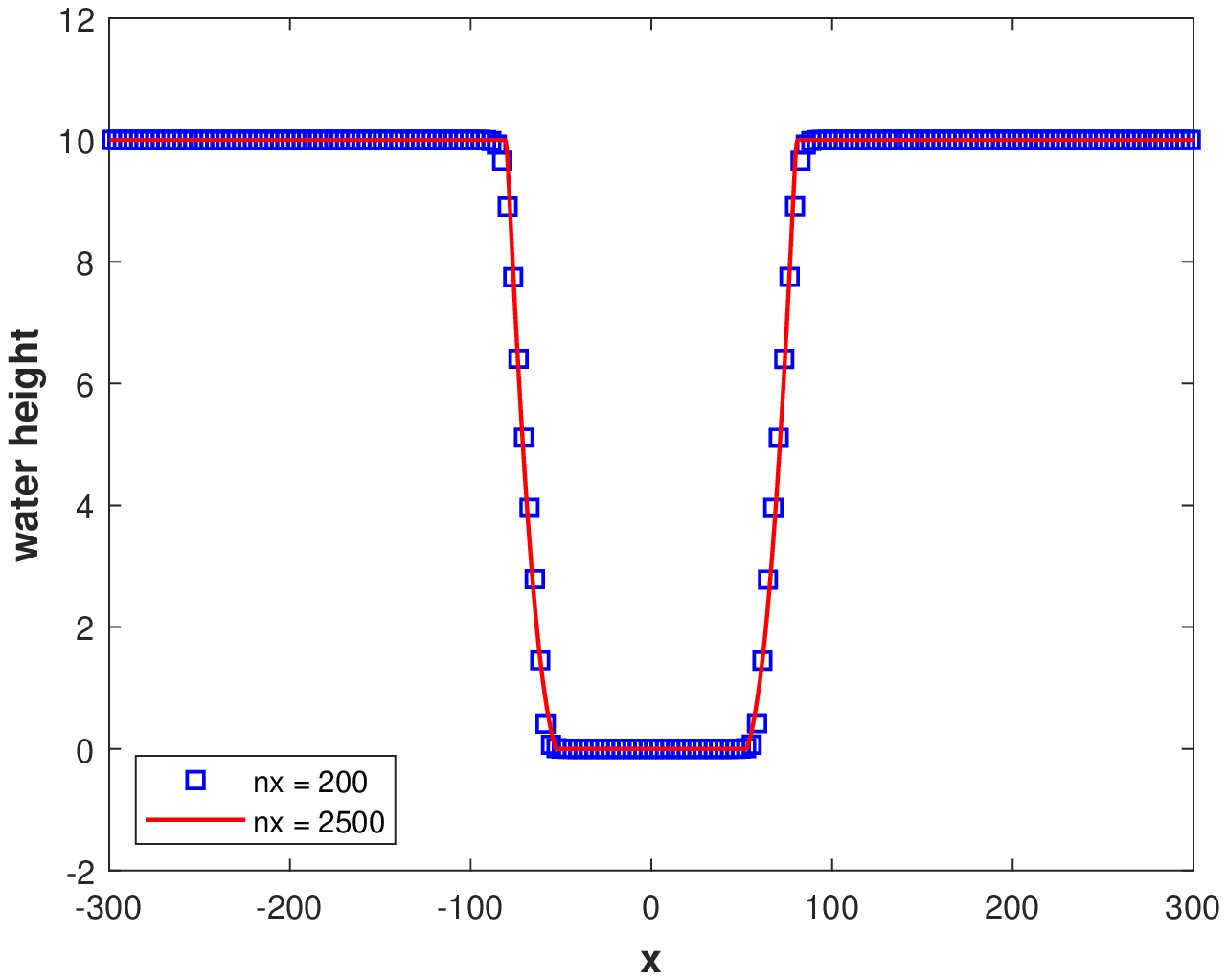}
  }
  \subfigure[the discharge, $t = 1$]{
  \centering
     \includegraphics[width= 5.5cm,scale=1]{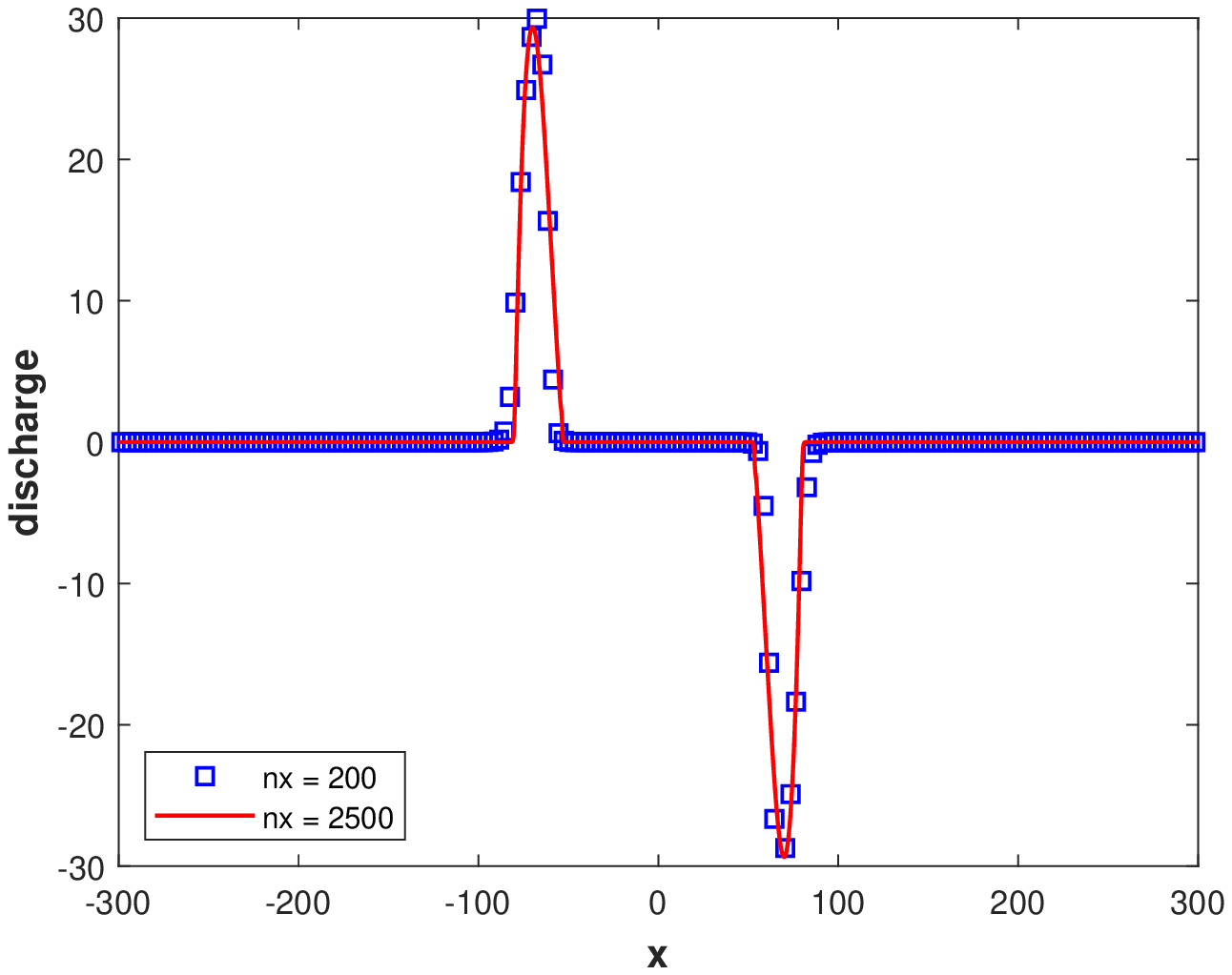}
  }
  \subfigure[water height, $t = 12$]{
  \centering
     \includegraphics[width= 5.5cm,scale=1]{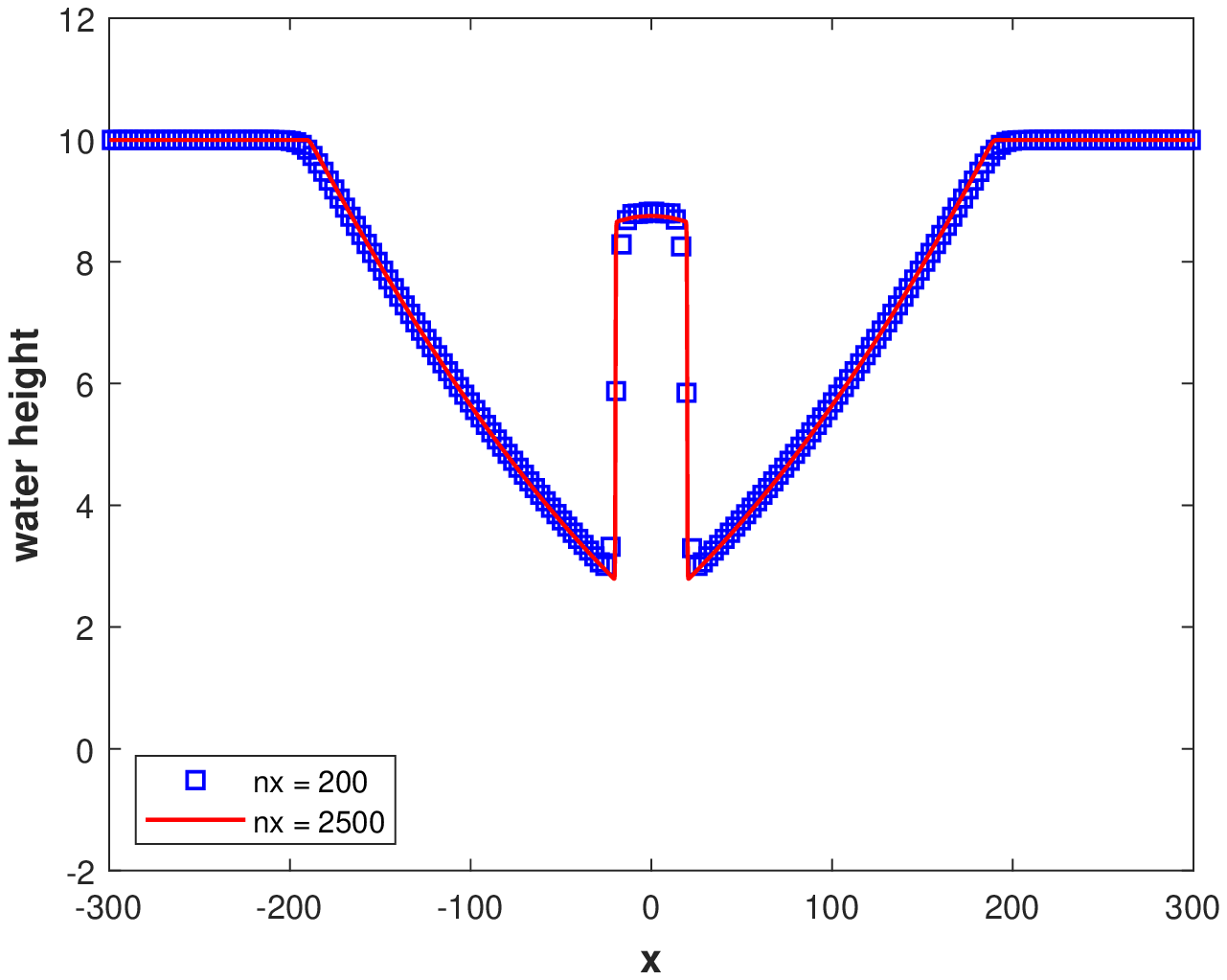}
  }
  \subfigure[the discharge, $t = 12$]{
  \centering
     \includegraphics[width= 5.5cm,scale=1]{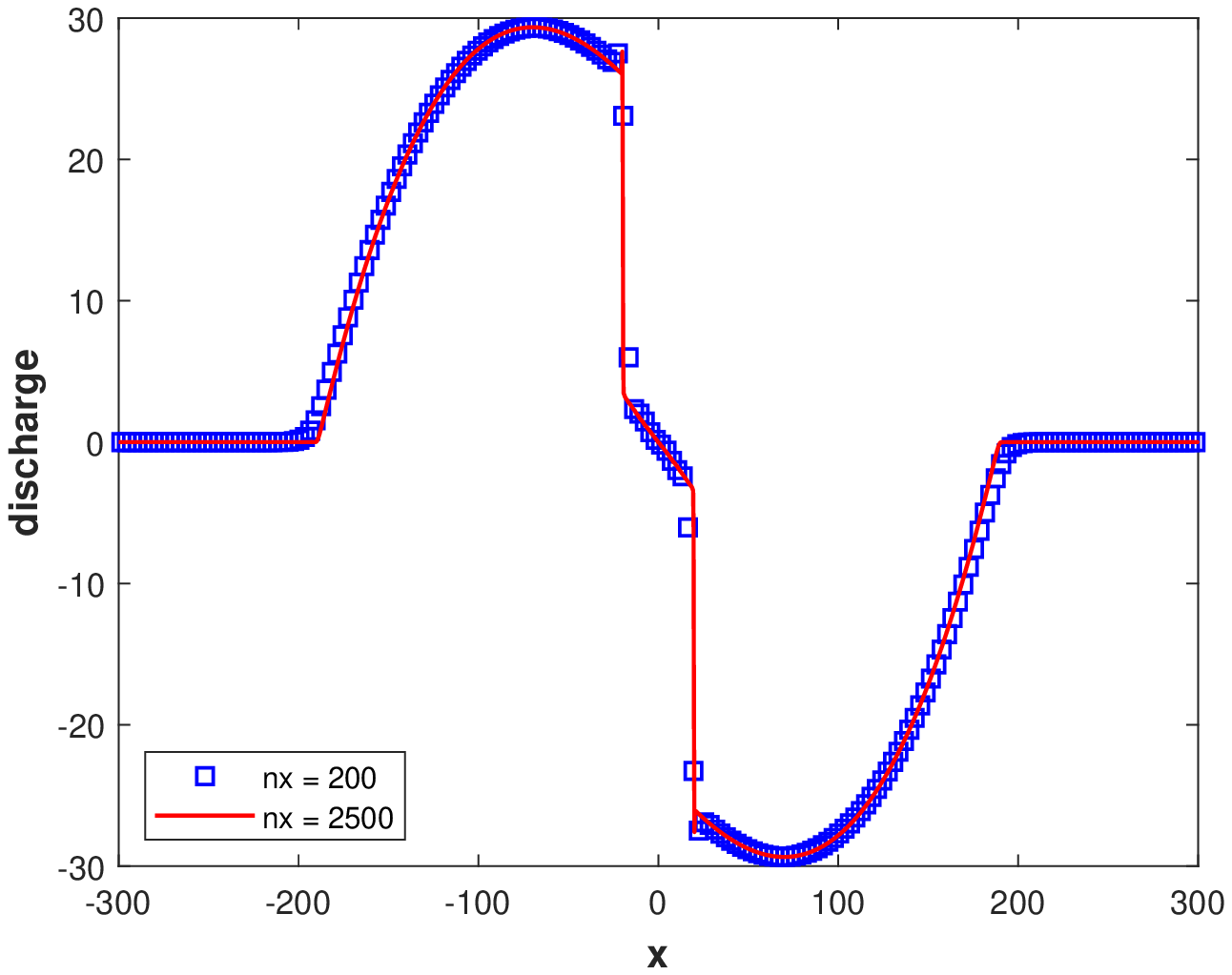}
  }
  \subfigure[water height, $t = 18$]{
  \centering
     \includegraphics[width= 5.5cm,scale=1]{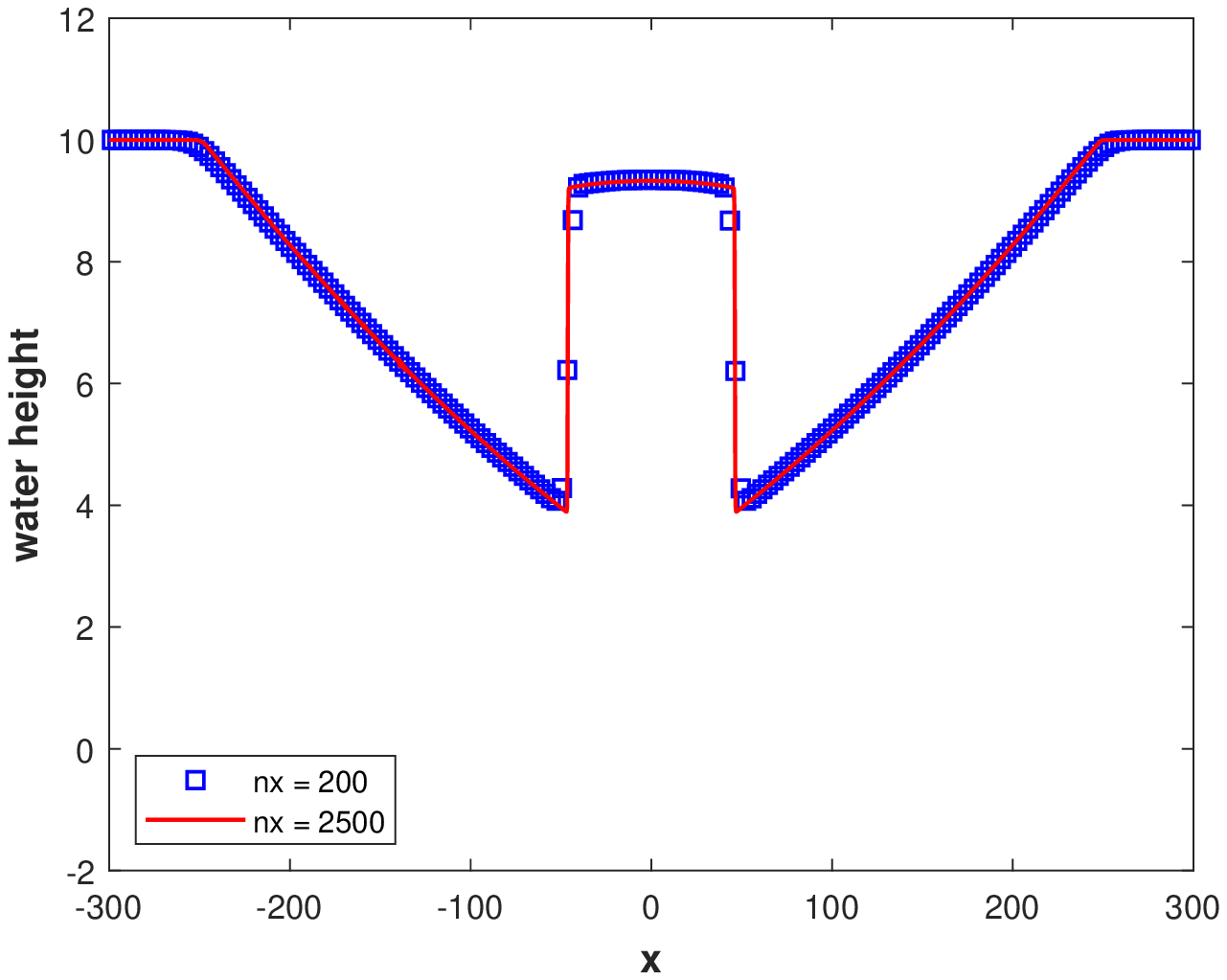}
  }
  \subfigure[the discharge, $t = 18$]{
  \centering
     \includegraphics[width= 5.5cm,scale=1]{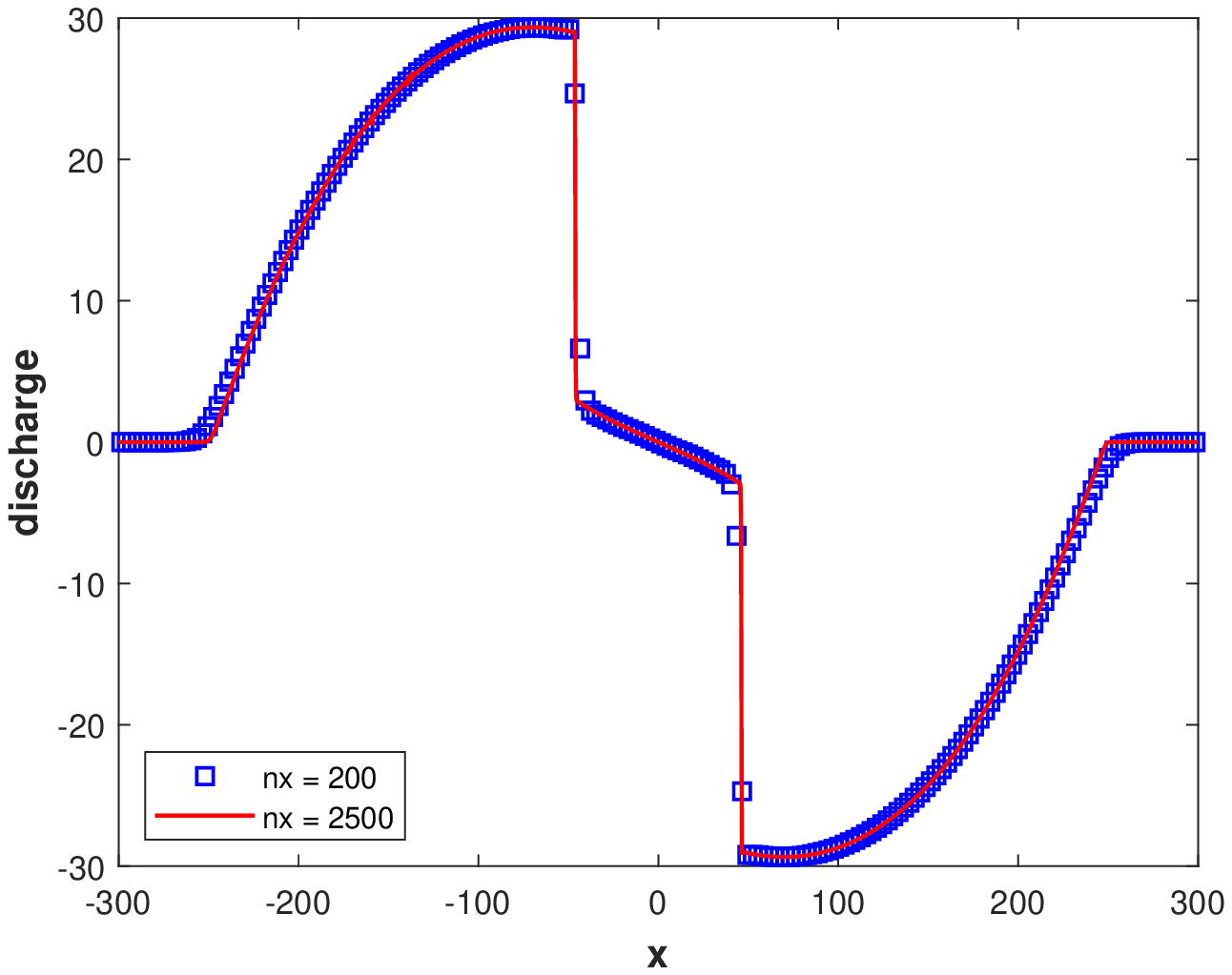}
  }
   \caption{Example \ref{flat1D}: The non well-balanced ALE-WENO scheme for Moses's second problem with initial condition (\ref{flat3}). The numerical solutions of the water height (left) and the discharge (right) with 200 moving and 2500 uniform grid cells at different times $t=0,1,12,18$,  from top to bottom.}\label{WENOmoses}
\end{figure}

\begin{example}{\bf Test for the well-balanced property}\label{exactC1D}
\end{example}
We use this example to show that compared to the non well-balanced scheme, the well-balanced ALE-WENO hybrid schemes indeed preserve the well-balanced property on moving meshes. Considering two different bottom topography on the computational domain $0\leq x\leq 10$, one is a smooth bottom
\begin{equation}\label{smo1D}
  b(x)=5e^{-0.4(x-5)^2},
\end{equation}
and the other is a discontinuous bottom
\begin{equation}\label{dis1D}
  b(x)=\left\{\begin{array}{lll}
    4,&\text{if}\ x\in[4,8], \\
    0,&\text{otherwise}.\\
    \end{array}\right.
\end{equation}
The initial condition is given by
$$h+b = 10,\quad hu=0.$$
We compute the solution up to $t = 0.5$ on moving grid (\ref{wangge1}) with $N = 200$ uniform cells at the beginning, and calculate the $L^1$ and $L^{\infty}$ errors at double precision for the surface level $h+b$ and the discharge $hu$. Computational results over two bottom functions are shown in Tables \ref{smoothh1D} and \ref{nonsmoothh1D} respectively. It can be seen that no matter what the bottom topography is, the numerical errors based on the well-balanced schemes are at the level of machine accuracy, which is in corresponds with the well-balanced property we expected.
\begin{table}[htb]
  \centering
  \caption{Example \ref{exactC1D}: $L^1$ and $L^{\infty}$ errors for the hydrostatic equilibrium state with smooth bottom topography  (\ref{smo1D}) by ALE-WENO hybrid schemes.}\label{smoothh1D}
  \begin{tabular}{c c c c c c c c c c }
   \toprule
    \multirow{2}{*} {Scheme} &\multicolumn{2}{c}{$L^1$ error}&\multicolumn{2}{c}{$L^{\infty}$ error}\\
    \cmidrule(lr){2-3} \cmidrule(lr){4-5}
    &{$h+b$}&{$hu$}&{$h+b$}&{$hu$}\\
    \midrule
   non well-balanced  scheme     & 7.97E{-06} &  4.09E{-05} &  1.95E{-06} & 1.09E{-05} \\
   hydrostatic reconstruction    & 8.72E{-13} &  8.49E{-13} &  1.10E{-13} & 2.39E{-13} \\
   special source term treatment & 8.65E{-13} &  9.18E{-13} &  1.10E{-13} & 2.77E{-13} \\
     \bottomrule
  \end{tabular}
\end{table}

\begin{table}[htb]
  \centering
  \caption{Example \ref{exactC1D}: $L^1$ and $L^{\infty}$ errors for the hydrostatic equilibrium state with discontinuous bottom topography (\ref{dis1D}) by ALE-WENO hybrid schemes.}\label{nonsmoothh1D}
  \begin{tabular}{c c c c c c c c c c }
   \toprule
    \multirow{2}{*} {Scheme} &\multicolumn{2}{c}{$L^1$ error}&\multicolumn{2}{c}{$L^{\infty}$ error}\\
    \cmidrule(lr){2-3} \cmidrule(lr){4-5}
    &{$h+b$}&{$hu$}&{$h+b$}&{$hu$}\\
    \midrule
   non well-balanced  scheme     & 1.07E{+00} & 3.08E{-01} &  5.30E{+00} & 2.51E{+00} \\
   hydrostatic reconstruction    & 9.24E{-13} & 9.17E{-13} &  1.31E{-13} & 2.77E{-13} \\
   special source term treatment & 9.24E{-13} & 9.33E{-13} &  1.23E{-13} & 2.43E{-13} \\
     \bottomrule
  \end{tabular}
\end{table}

\begin{example}{\bf A small perturbation of a steady-state water}\label{perturbation1D}
\end{example}
To demonstrate the performance of our ALE-WENO scheme and  well-balanced ALE-WENO hybrid schemes  on a rapidly varying flow and the small perturbation of a hydrostatic state, bottom topography, and the initial conditions are chosen from the same example proposed by LeVeque \cite{leveque1998balancing}:
  \begin{align*}
    & b(x)=\left\{\begin{array}{lll}
    0.25(\cos(10\pi(x-1.5))+1),&  \text{if} \ 1.4\leq x \leq 1.6, \\
    0,&\text{otherwise},\\
    \end{array}\right.  \\
    & (hu)(x,0)=0,\quad h(x,0) = \left\{\begin{array}{lll}
    1-b(x)+\epsilon,&  \text{if} \  1.1\leq x \leq 1.2, \\
    1-b(x),&\text{otherwise},\\
    \end{array}\right.
  \end{align*}
on the computational domain $[0,2]$. $\epsilon$ is a perturbation parameter. We test  $\epsilon=0.001$ (small pulse) at $t=0.2$  with transmissive boundary conditions. Figs.  \ref{WENO0.001h1D} displays the results with 200 moving cells and 3000 uniform cells for comparison.  Here we take $\varepsilon=10^{-9}$ in the ALE-WENO reconstruction  nonlinear weights formula (\ref{omegajjj}) to make sure that it's smaller than the square of the perturbation to avoid oscillations.\par
Theoretically, the exact solution will propagate to the left and right at the characteristic speeds $\pm \sqrt{gh}$.
Although the non well-balanced ALE-WENO  scheme behaves well in most experiments, it is difficult to calculate on a rapidly varying flow over a smooth bed for the small pulse perturbation.
By contrast, our well-balanced ALE-WENO hybrid schemes have their advantages in capturing the small perturbation of the hydrostatic equilibrium state, and the solutions are free of spurious numerical oscillations near the discontinuity.

\begin{figure}[htb]
  \centering
  \subfigure[surface level $h+b$]{
  \centering
     \includegraphics[width= 7cm,scale=1]{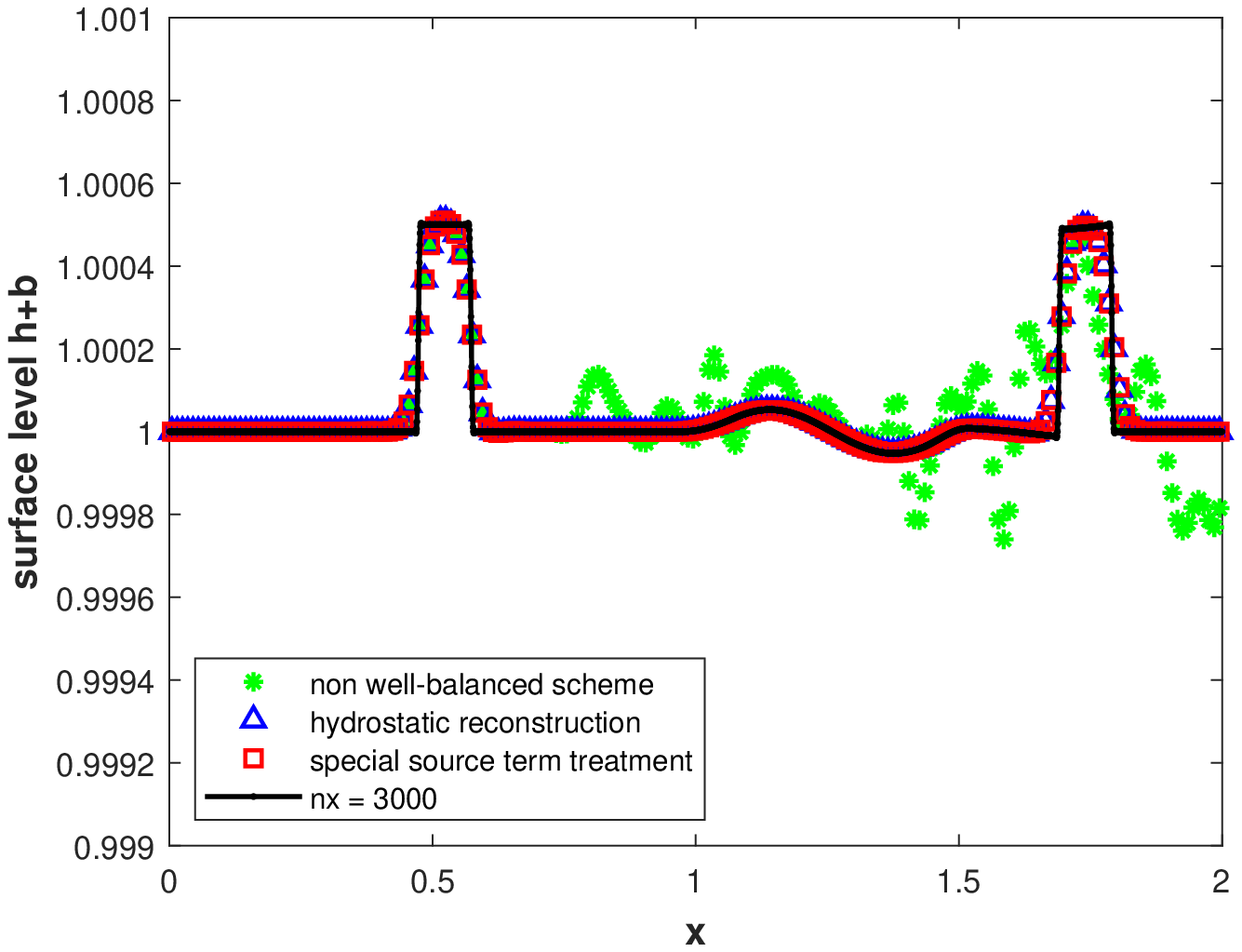}
  }
  \subfigure[the discharge $hu$]{
  \centering
     \includegraphics[width= 7cm,scale=1]{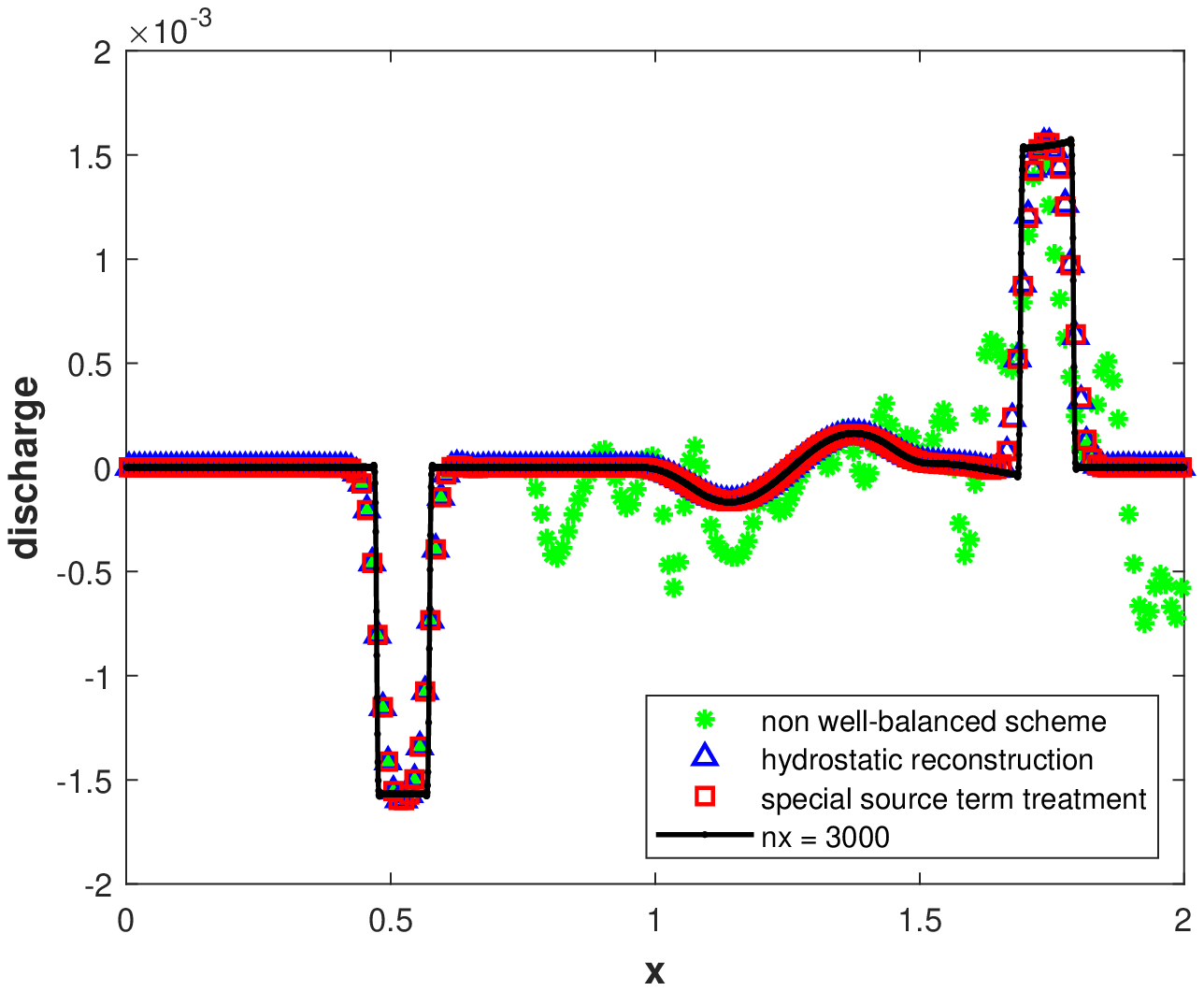}
  }
  \caption{Example \ref{perturbation1D}: ALE-WENO hybrid schemes with 200 moving cells for the small pulse $\epsilon=0.001$ at $t=0.2$. Left: surface level $h+b$; right: the discharge $hu$.}\label{WENO0.001h1D}
\end{figure}

For demonstrating the advantage of the ALE method, we choose a special moving mesh for the big pulse problem ($\epsilon=0.2$). Grid at the beginning is
\begin{equation}\label{0.2inis}
    x_{i+\frac{1}{2}}(0)=\left\{ \begin{array}{lcl}
                0.0372 i,&& 0\leqslant i \leqslant 25 , \\
                0.93+0.008(i-25), &&26 \leqslant i \leqslant 75, \\
                1.33+0.0268(i-75),&&76 \leqslant i \leqslant 100.
                 \end{array} \right.
\end{equation}
At the final time the grid is defined by
\begin{equation}\label{0.2ends}
    x_{i+\frac{1}{2}}(t_{\text{end}})=\left\{ \begin{array}{lcl}
                2 i/35,&& 0\leqslant i \leqslant 7,  \\
                0.4+{(i-7)}/140, &&8 \leqslant i \leqslant 35, \\
                0.6+53(i-35)/1650,&&36 \leqslant i \leqslant 68 ,\\
                1.66+(i-68)/150, &&69 \leqslant i \leqslant 98 ,\\
                1.86+0.07(i-98),&&99 \leqslant i \leqslant 100,
                 \end{array} \right.
\end{equation}
at this time we define the moving grid velocity
\begin{equation}\label{velocity}
  w_{i+\frac{1}{2}} = \dfrac{x_{i+\frac{1}{2}}(t_{\text{end}}) - x_{i+\frac{1}{2}}(0)}{t_{\text{end}}-0},
\end{equation}
and the moving mesh:
\begin{equation}\label{meshspecial}
  x_{i+\frac{1}{2}}(t) = x_{i+\frac{1}{2}}(0) + w_{i+\frac{1}{2}}t.
\end{equation}
We use our two well-balanced  ALE-WENO hybrid schemes consisting of 100 moving grid cells (\ref{meshspecial}) in contrast with the results resolved by the well-balanced WENO hybrid scheme involving 100 and 3000  static uniform grids.
Fig. \ref{WENO0.2h1Dcom} shows the numerical solutions with hydrostatic reconstruction, illustrating that under a relatively coarse mesh. There is a better agreement between the present result on an appropriate moving mesh and the reference computation than those solved by the WENO  scheme on the static mesh.

\begin{figure}[htb]
  \centering
  \subfigure[surface level $h+b$]{
  \centering
     \includegraphics[width= 7cm,scale=1]{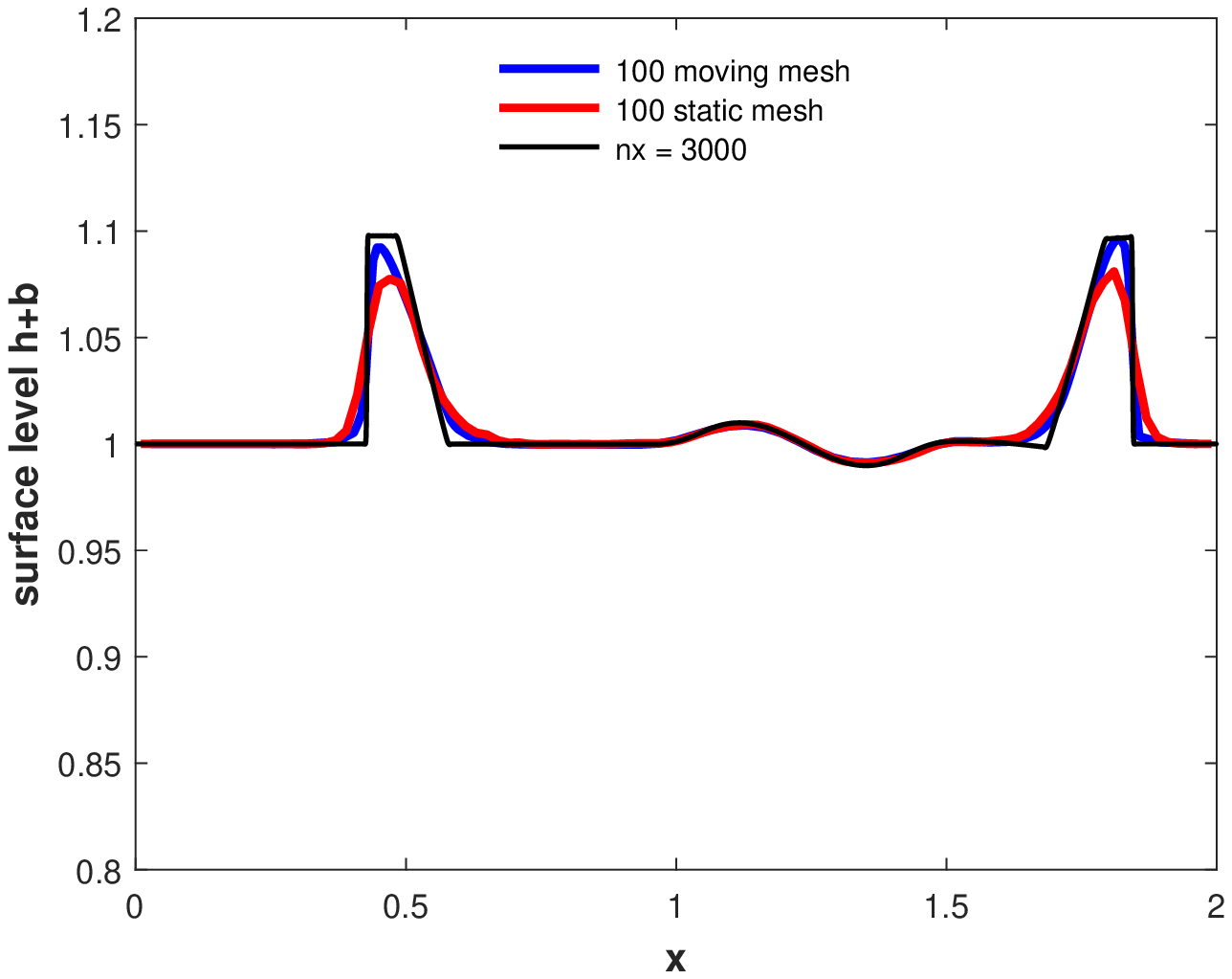}
  }
  \subfigure[the discharge $hu$]{
  \centering
     \includegraphics[width= 7cm,scale=1]{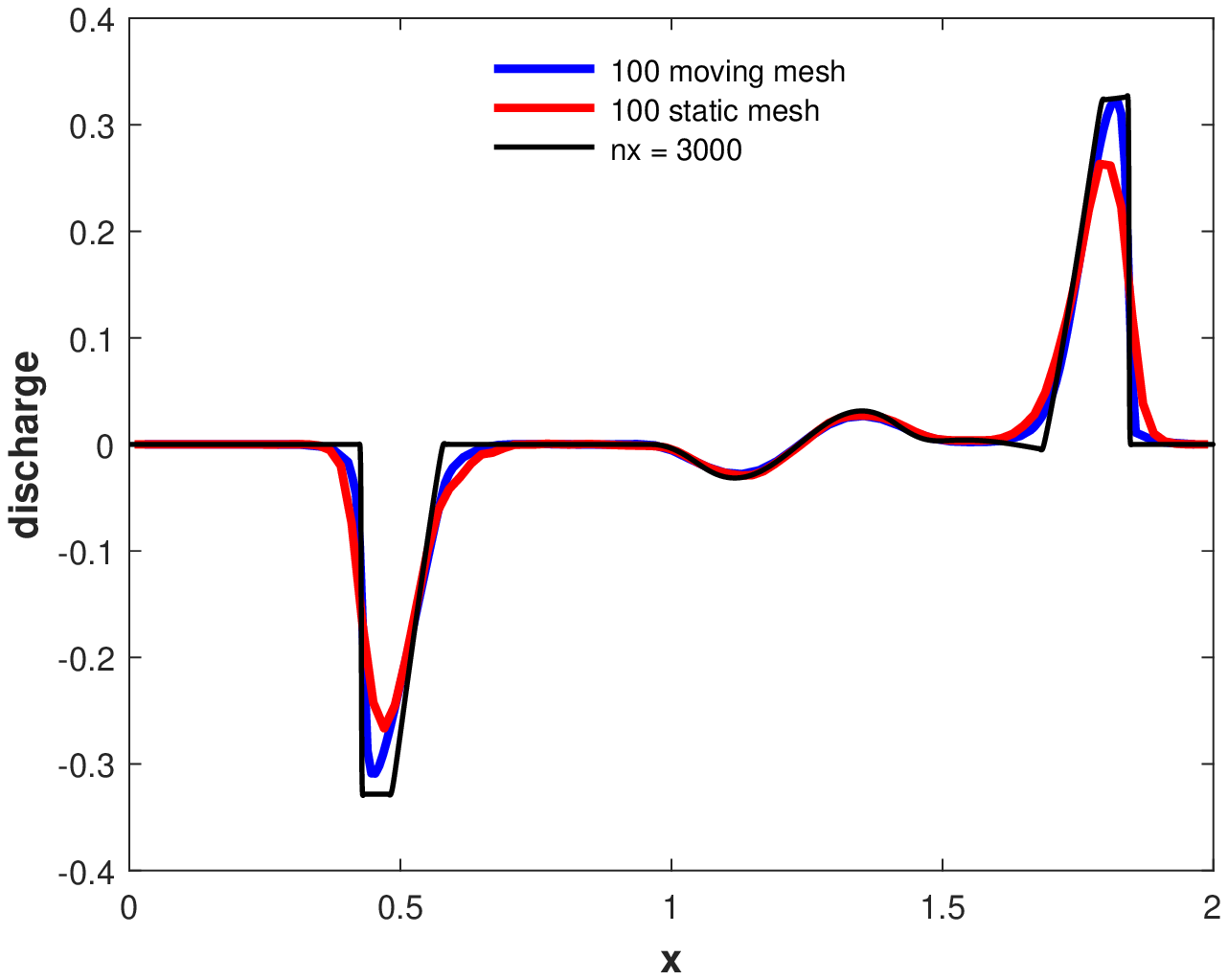}
  }
  \caption{Example \ref{perturbation1D}: The ALE-WENO hybrid scheme with hydrostatic reconstruction, by using 100 moving and static cells for the big pulse $\epsilon=0.2$ at  $t=0.2$. Left: the surface level $h+b$; right: the discharge $hu$.}\label{WENO0.2h1Dcom}
\end{figure}

\begin{example}{\bf The dam breaking problem over a discontinuous bottom topography}\label{bump1D}
\end{example}
We implement a dam break test case used in \cite{xing2006new} over a rectangular bump which involves  rapidly varying flow over a discontinuous bottom topography on a domain $[0,1500]$
\begin{equation}\label{bdam}
  b(x)=\left\{\begin{array}{lcl}
    8,&   \text{if} \ \   562.5\leq x \leq 937.5, \\
    0,&\text{otherwise}.\\
    \end{array}\right.
\end{equation}
and the initial conditions are given by
\begin{equation}\label{inidam}
 (hu)(x,0)=0,\quad h(x,0) = \left\{\begin{array}{lcl}
    20-b(x),& \text{if} \ \  x \leq 750, \\
    15-b(x),&\text{otherwise}.\\
    \end{array}\right.
\end{equation}
We can see from the values (\ref{bdam})-(\ref{inidam})  that the bottom topograph is in poor condition since the water height $h(x)$  and the bottom topography $b(x)$ is discontinuous at the points $x=562.5$ and $x=937.5$, while the surface level $h(x)+b(x)$ is smooth there, consequently we choose the following moving mesh
\begin{equation}\label{wangge3}
\begin{aligned}
 x_{j+\frac{1}{2}}(t) =x_{j+\frac{1}{2}}(0)& + \dfrac{1}{3(x_{\max}-x_{\min})^4}\sin\left(\dfrac{2\pi t}{t_{\text{end}}}\right)\left(x_{j+\frac{1}{2}}(0)-x_{\max}\right)\\
 &\left(x_{j+\frac{1}{2}}(0)-x_{\min}\right)\left(x_{j+\frac{1}{2}}(0)-562.5\right)\left(x_{j+\frac{1}{2}}(0)-937.5\right),
 \end{aligned}
 \end{equation}
in order to make sure that  $x=562.5$ and $x=937.5$ are located at the cell boundary. Furthermore,  the approximation of the bottom topography is chosen as the $L^2$ projection (\ref{baverage}) of the exact bottom topography instead of (\ref{bottomrk}). Figs. \ref{WENOh151D} and  \ref{WENOh601D} display the numerical results at two different times $t=15$ and $t=60$, calculated by our ALE-WENO hybrid schemes with 100 moving grid cells and 4000 uniform grid cells for comparison.
It shows a failed simulation of the non well-balanced ALE-WENO scheme, while two well-balanced WENO hybrid schemes on moving meshes give well resolved and oscillatory-free solutions.

\begin{figure}[htb]
  \centering
  \subfigure[initial condition and bottom topography]{
  \centering
     \includegraphics[width= 7cm,scale=1]{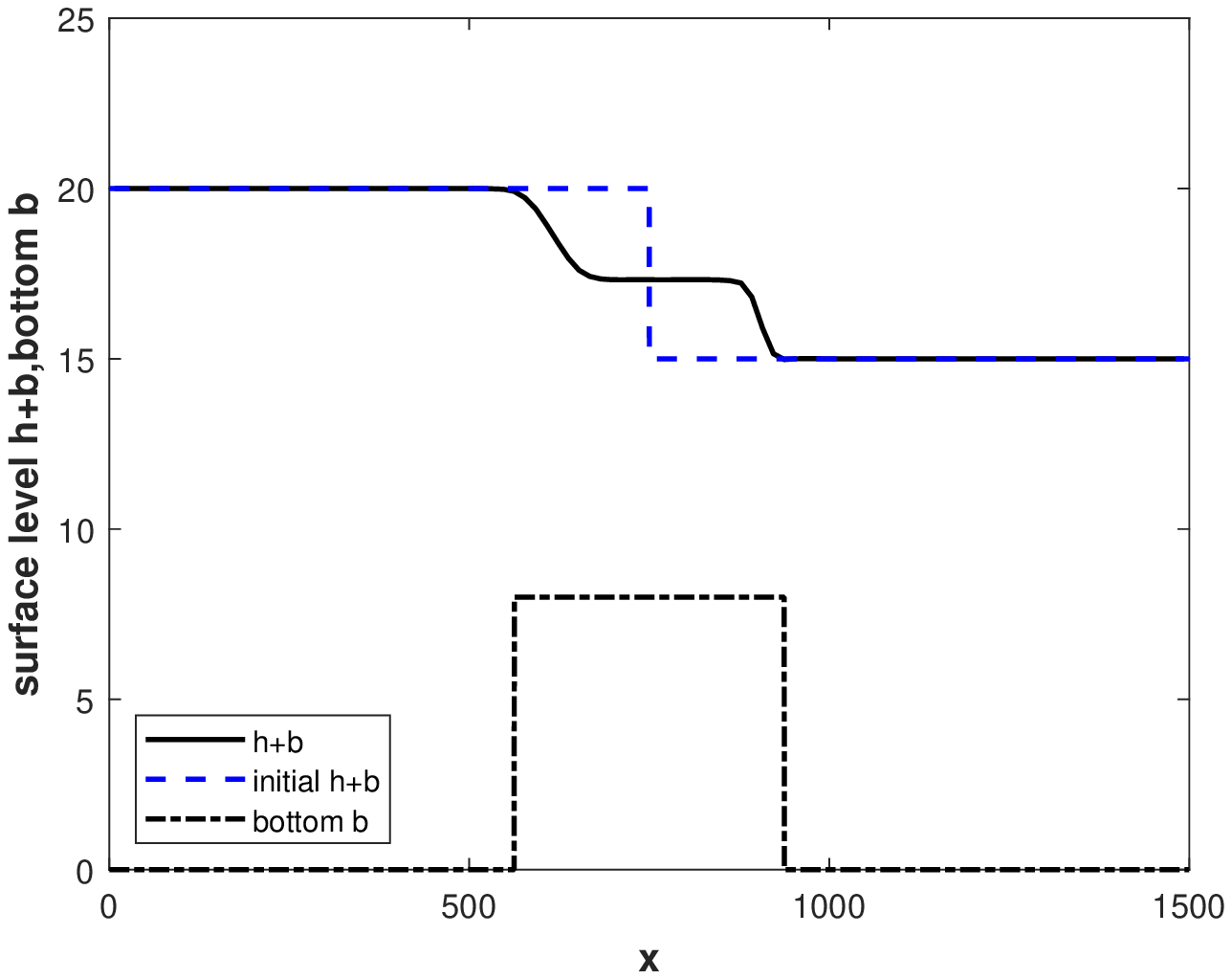}
  }
  \subfigure[surface level $h+b$]{
  \centering
     \includegraphics[width= 7cm,scale=1]{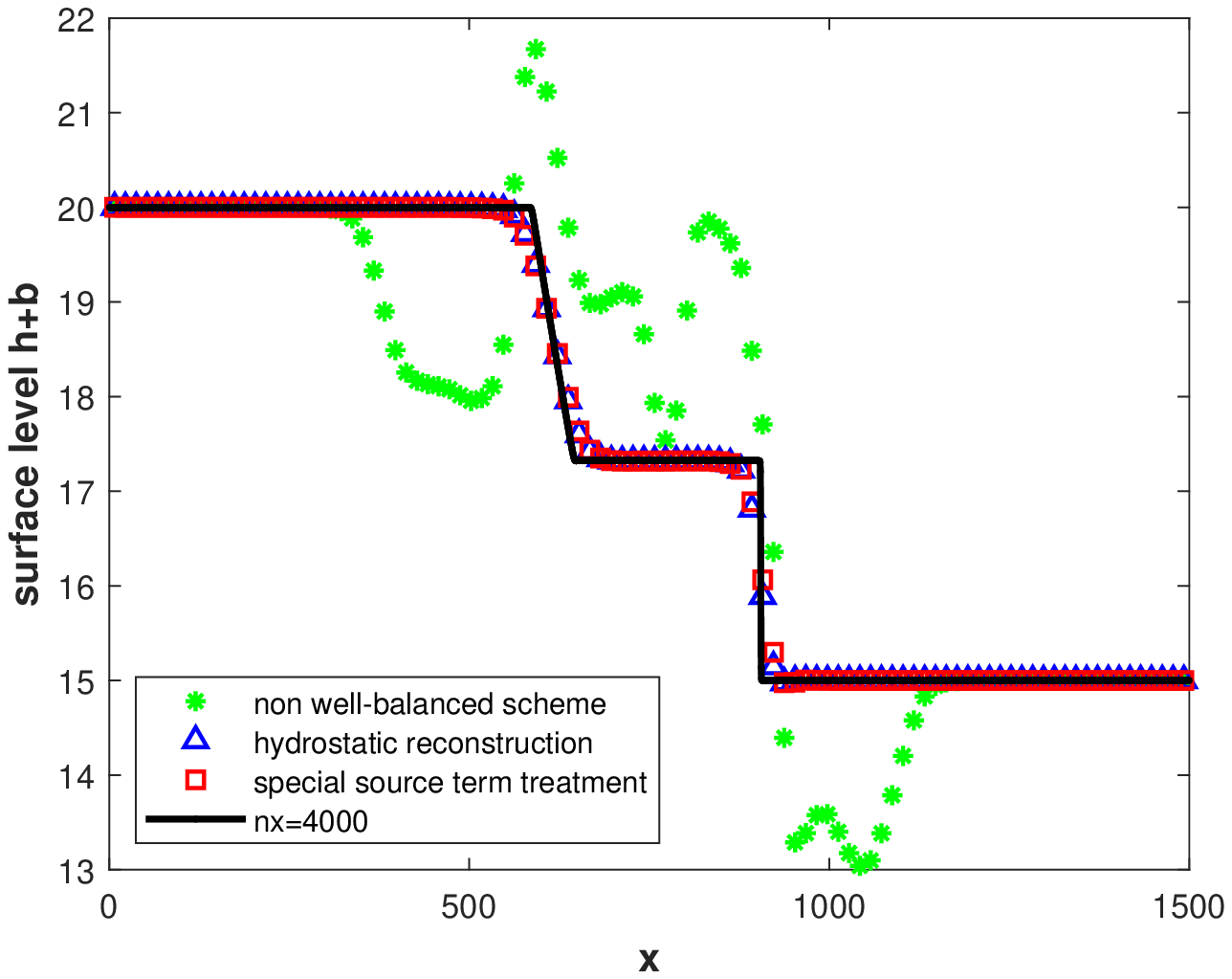}
  }
  \caption{Example \ref{bump1D}: ALE-WENO hybrid schemes  with 100 moving cells at time $t=15$. Left: the numerical solution of the surface level, plotted with the initial condition and the bottom topography; right: the surface level using 4000 uniform grid cells for comparison.}\label{WENOh151D}
\end{figure}

\begin{figure}[htb]
  \centering
  \subfigure[initial condition and bottom topography]{
  \centering
     \includegraphics[width= 7cm,scale=1]{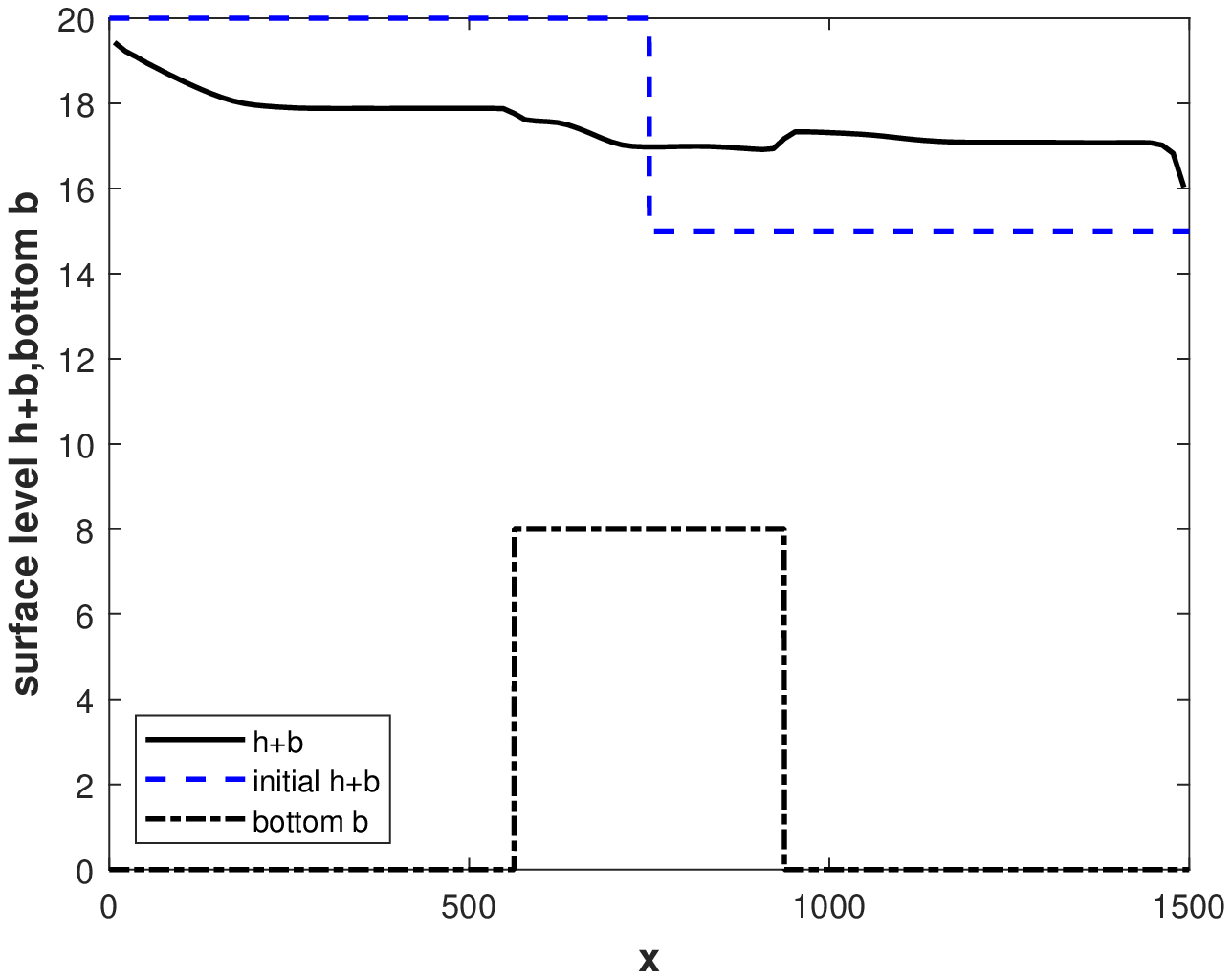}
  }
  \subfigure[surface level $h+b$]{
  \centering
     \includegraphics[width= 7cm,scale=1]{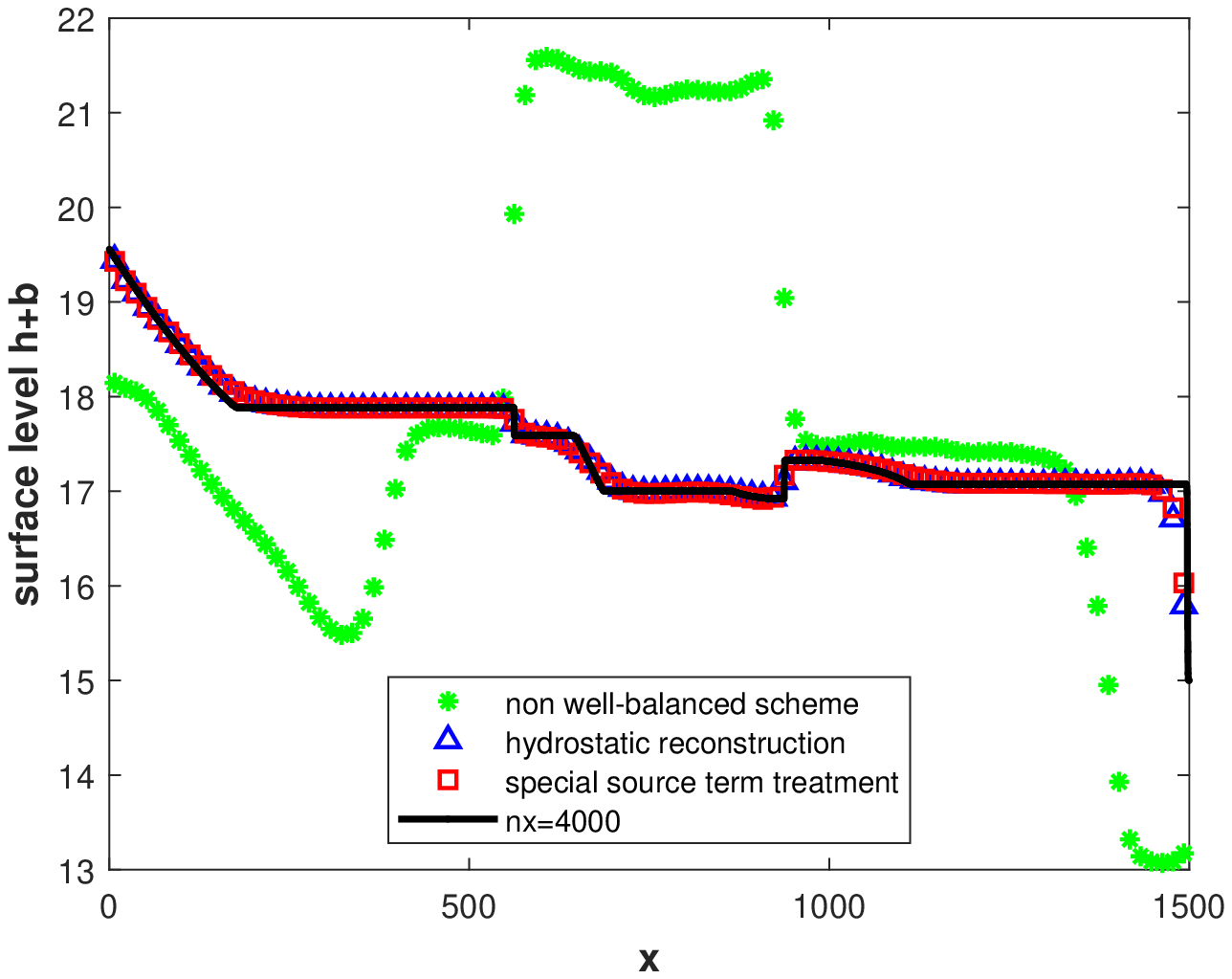}
  }
  \caption{Example \ref{bump1D}: ALE-WENO hybrid schemes  with 100 moving cells at time $t=60$. Left: the numerical solution of the surface level, plotted with the initial condition and the bottom topography; right: the surface level using  4000 uniform grid cells  for comparison.}\label{WENOh601D}
\end{figure}

\subsection{Two-dimensional tests}
\begin{example}{\bf Accuracy test}\label{accuracy2D}
\end{example}
First we test our schemes can achieve high order accuracy for a smooth solution in two dimensions. We have the following bottom function and initial conditions  on the computational domain $[0,1]\times[0,1]$
 \begin{equation}\label{inismooth}
   \begin{aligned}
 b(x,y) = \sin(2\pi x)+\cos(2\pi y),\quad & h(x,y,0)=10+e^{\sin(2\pi x)}\cos(2\pi y), \\
  hu(x,y,0) = \sin(\cos(2\pi x))\sin(2\pi y),\quad & hv(x,y,0) = \cos(2\pi x)\cos(\sin(2\pi y)),
  \end{aligned}
\end{equation}
with periodic boundary conditions. We compute the solution on the moving grid  (\ref{wangge2}) up to $t =0.05$ when the solution is still smooth. Similar to one-dimensional case, since the exact solution for the initial value problem cannot be expressed explicitly, we use the  well-balanced fifth-order finite volume WENO-JS scheme to compute a reference solution on uniform meshes with $ 1600\times 1600$ cells. Tables \ref{WENO5f2D}-\ref{WENO5s2D} show the $L^1$ errors and numerical orders of accuracy for the surface level $h+b$ and the discharge $hu$, $hv$ with three   ALE-WENO hybrid schemes. Herein the CFL number is chosen according to the values in the tables, which guarantee the spatial errors dominate. The expected fifth-order accuracy can be seen on moving meshes in two-dimensional.

\begin{table}[htb]
  \centering
  \caption{Example \ref{accuracy2D}: $L^1$ errors for cell averages and numerical orders of accuracy by the non well-balanced ALE-WENO hybrid scheme for initial condition (\ref{inismooth}), $t=0.05$.}\label{WENO5f2D}
  \begin{tabular}{ c c c c c c c c}
  \toprule
   \multirow{2}{*} {$N_x\times N_y$} &\multirow{2}{*} {CFL}& \multicolumn{2}{c}{$h+b$}&\multicolumn{2}{c}{$ hu $}&\multicolumn{2}{c}{$ hv $}\\
   \cmidrule(lr){3-4} \cmidrule(lr){5-6}\cmidrule(lr){7-8}
   ~ & ~&$L^1$ error &order&$ L^1 $error&order&$ L^1$ error&order\\
   \midrule
   25 $\times$ 25 &0.6& 6.55E{-03} &    -- & 2.13E{-02} &   -- & 5.87E{-02} &   --\\
   50 $\times$ 50 &0.6& 1.09E{-03} &  2.59 & 2.04E{-03} & 3.39 & 8.39E{-03} & 2.81\\
  100 $\times$ 100&0.5& 9.22E{-05} &  3.56 & 1.75E{-04} & 3.54 & 7.24E{-04} & 3.54\\
  200 $\times$ 200&0.4& 4.11E{-06} &  4.49 & 6.76E{-06} & 4.70 & 3.26E{-05} & 4.47\\
  400 $\times$ 400&0.3& 1.57E{-07} &  4.71 & 2.93E{-07} & 4.53 & 1.24E{-06} & 4.72\\
  800 $\times$ 800&0.2& 4.81E{-09} &  5.03 & 1.04E{-08} & 4.81 & 3.96E{-08} & 4.97\\
  \bottomrule
  \end{tabular}
\end{table}

\begin{table}[htb]
  \centering
  \caption{Example \ref{accuracy2D}: $L^1$ errors for cell averages and numerical orders of accuracy by the ALE-WENO hybrid scheme with hydrostatic reconstruction for initial condition (\ref{inismooth}), $t=0.05$.}\label{WENO5h2D}
  \begin{tabular}{ c c c c c c c c}
  \toprule
   \multirow{2}{*} {$N_x\times N_y$} &\multirow{2}{*} {CFL}&  \multicolumn{2}{c}{$h+b$}&\multicolumn{2}{c}{$ hu $}&\multicolumn{2}{c}{$ hv $}\\
   \cmidrule(lr){3-4} \cmidrule(lr){5-6}\cmidrule(lr){7-8}
   ~ & ~&$L^1$ error &order&$ L^1 $error&order&$ L^1$ error&order\\
   \midrule
    25 $\times$ 25 &0.6& 6.47E{-03} &    -- &  2.24E{-02} &   -- & 5.64E{-02} &   -- \\
    50 $\times$ 50 &0.6& 1.08E{-03} &  2.58 &  2.07E{-03} & 3.44 & 8.35E{-03} & 2.76 \\
   100 $\times$ 100&0.5& 9.21E{-05} &  3.56 &  1.76E{-04} & 3.56 & 7.24E{-04} & 3.53 \\
   200 $\times$ 200&0.4& 4.11E{-06} &  4.49 &  6.76E{-06} & 4.70 & 3.27E{-05} & 4.47 \\
   400 $\times$ 400&0.3& 1.56E{-07} &  4.71 &  2.93E{-07} & 4.53 & 1.24E{-06} & 4.72 \\
   800 $\times$ 800&0.2& 4.75E{-09} &  5.04 &  1.04E{-08} & 4.82 & 3.95E{-08} & 4.97 \\
  \bottomrule
  \end{tabular}
\end{table}

\begin{table}[htb]
  \centering
  \caption{Example \ref{accuracy2D}: $L^1$ errors for cell averages and numerical orders of accuracy by the ALE-WENO hybrid scheme with special source term treatment for initial condition (\ref{inismooth}), $t=0.05$.}\label{WENO5s2D}
  \begin{tabular}{ c c c c c c c c}
  \toprule
   \multirow{2}{*} {$N_x\times N_y$} &\multirow{2}{*} {CFL}& \multicolumn{2}{c}{$h+b$}&\multicolumn{2}{c}{$ hu $}&\multicolumn{2}{c}{$ hv $}\\
   \cmidrule(lr){3-4} \cmidrule(lr){5-6}\cmidrule(lr){7-8}
   ~ & ~ &$L^1$ error &order&$ L^1 $error&order&$ L^1$ error&order\\
   \midrule
    25 $\times$ 25 &0.6& 6.52E{-03} &    -- & 2.23E{-02} &   -- &  5.65E{-02} &   --\\
    50 $\times$ 50 &0.6& 1.09E{-03} &  2.58 & 2.06E{-03} & 3.44 &  8.36E{-03} & 2.76\\
   100 $\times$ 100&0.5& 8.50E{-05} &  3.68 & 1.33E{-04} & 3.95 &  6.75E{-04} & 3.63\\
   200 $\times$ 200&0.4& 4.11E{-06} &  4.37 & 6.75E{-06} & 4.30 &  3.27E{-05} & 4.37\\
   400 $\times$ 400&0.3& 1.57E{-07} &  4.71 & 2.93E{-07} & 4.53 &  1.25E{-06} & 4.71\\
   800 $\times$ 800&0.2& 4.79E{-09} &  5.04 & 1.04E{-08} & 4.81 &  3.98E{-08} & 4.97\\
  \bottomrule
  \end{tabular}
\end{table}

\begin{example}{\bf A quasi-stationary vortex}\label{vortex2D}
\end{example}
We test the same example with a flat bottom topography $b(x,y,0)=0$ in \cite{beisiegel2021metrics}. Let the computational domain be $[0,4]\times[0,2]$, a vortex with its radius $r_m = 0.45$ around $c=(1,1)^T$ has a tangential velocity defined by
$$ v_r(r) =  \left\{\begin{array}{lll}
    v_{max}\dfrac{s\cdot r}{r_m^2 - r^2}\sqrt{2\exp\left(\dfrac{1}{r^2 - r_m^2} \right)},&\text{for} \ 0\leq r<r_m, \\
    0,& \text{otherwise},
    \end{array}\right.$$
where $r = \sqrt{(x-1)^2+(y-1)^2}$ is the radius distance from  $c$ and $v_{max}=0.5$ is the maximum tangential velocity,  the scaling factor is defined as
\begin{align*}
 s = \dfrac{|r_{vm}^2 - r_m^2|}{r_{vm}\sqrt{2\exp(1/(r_{vm}^2 - r_m^2))}}, \ \text{where}\ \  r_{vm} = \dfrac{1}{2}\sqrt{-2+2\sqrt{1+4r_m^4}}.
\end{align*}
We choose the following initial conditions
 \begin{equation}\label{vo2}
\begin{aligned}
 &h(x,y,0) = \left\{\begin{array}{lll}
    1 - \dfrac{v_{max}^2 s^2}{g} \exp\left(\dfrac{1}{r^2 - r_m^2} \right),& \text{for} \ 0\leq r<r_m, \\
    1 ,& \text{otherwise},
    \end{array}\right.\\
    & u(x,y,0) = 1- v_r(r)\dfrac{y-2}{r}, v(x,y,0) =  - v_r(r)\dfrac{x-1}{r},
\end{aligned}
\end{equation}
simulating  up to $t=2$  with periodic boundary condition. The analytical solution of this test is that the vortex moves from left to right with the velocity $1m/s$. Therefore we resolve the quasi-stationary vortex problem  on  a special moving mesh involving $110\times70$ cell grids at the beginning
\begin{equation}\label{gridbinx}
\begin{aligned}
  &x_{i+\frac{1}{2}}(0)=\left\{ \begin{array}{lcl}
                0.05 i,&& 0\leqslant i \leqslant 10 , \\
                0.5+0.02(i-10), &&11 \leqslant i \leqslant 60, \\
                1.5+0.05(i-1.5),&&61 \leqslant i \leqslant 110,
                 \end{array} \right.
   \end{aligned}
\end{equation}
\begin{equation}\label{gridbiny}
\begin{aligned}
  &y_{j+\frac{1}{2}}(0) =\left\{ \begin{array}{lcl}
                0.05 j,&& 0\leqslant j \leqslant 10 , \\
                0.5+0.02(j-10), &&11 \leqslant j \leqslant 60, \\
                1.5+0.05(j-1.5),&&61 \leqslant j \leqslant 70,
                 \end{array} \right.
\end{aligned}
\end{equation}
and the mesh moves with the velocity $\boldsymbol w(x,y,t)=(1,0)^T$, which means
\begin{equation}\label{spemovingmesh}
  x_{i+\frac{1}{2}}(t) = x_{i+\frac{1}{2}}(0) + t, \ y_{j+\frac{1}{2}}(t) = y_{j+\frac{1}{2}}(0).
\end{equation}
Moreover, we compare our non well-balanced ALE-WENO scheme with the WENO  scheme on the static uniform mesh with the same cell number. The $L^1$ and $L^{\infty}$ errors of water height are computed for comparison and listed in Table \ref{2Dvor}. It shows the minor numerical errors of the ALE-WENO scheme under the same number of mesh elements. Additionally, the simulation with $110\times 70$ moving grids has almost the same numerical error as that on $220\times 140$ uniform grids, while the number of elements is $75\%$ less than the uniform ones.

\begin{table}[htb]
  \centering
  \caption{Example \ref{vortex2D}: $L^1$ and $L^{\infty}$ errors of the non well-balanced ALE-WENO scheme on static uniform meshes and  moving meshes for the quasi-stationary vortex problem.}\label{2Dvor}
  \begin{tabular}{c c c c c c c c c c c}
   \toprule
     \multirow{2}{*} {$N_x\times N_y$}&\multicolumn{2}{c}{static uniform mesh}&\multicolumn{2}{c}{special moving mesh}\\
    \cmidrule(lr){2-3} \cmidrule(lr){4-5}
     &{$L^1$ error}&{$L^{\infty}$ error}&{$L^1$ error}&{$L^{\infty}$ error}\\
    \midrule
     110 $\times$ 70 & 1.20E{-03} & 2.85E{-03} & 2.18E{-04} &  5.29E{-04} \\
     220 $\times$ 140& 2.21E{-04} & 4.12E{-04} & 2.96E{-05} &  7.56E{-05} \\
     \bottomrule
  \end{tabular}
\end{table}

\begin{example}{\bf Water drop problem}\label{waterdrop2D}
\end{example}
We apply the non well-balanced ALE-WENO hybrid scheme to illustrate the behavior of the water drop problem, which involves an initially localised impulse that develops into distinct fronts travelling throughout the domain. Gaussian shaped peak initial conditions
\begin{equation}\label{waterd2D}
  \begin{aligned}
    &h(x,y,0) = 1 + 0.1 e^{-1000((x-1)^2+(y-1)^2)},\\
    & hu(x,y,0) = hv(x,y,0) = 0,
  \end{aligned}
 \end{equation}
are considered on domain $[0,2]^2$ with reflective boundary condition over a flat bottom topography $b=0$. The initial and the evolution of water height at different times using the  ALE-WENO  scheme are provided in  Fig. \ref{water32D}, which yields well simulation of our schemes on moving meshes. The well-balanced schemes perform very similar, thus we omit it here.

\begin{figure}[htbp]
  \centering
  \subfigure[water height, $t = 0$]{
  \centering
     \includegraphics[width= 6cm,scale=1]{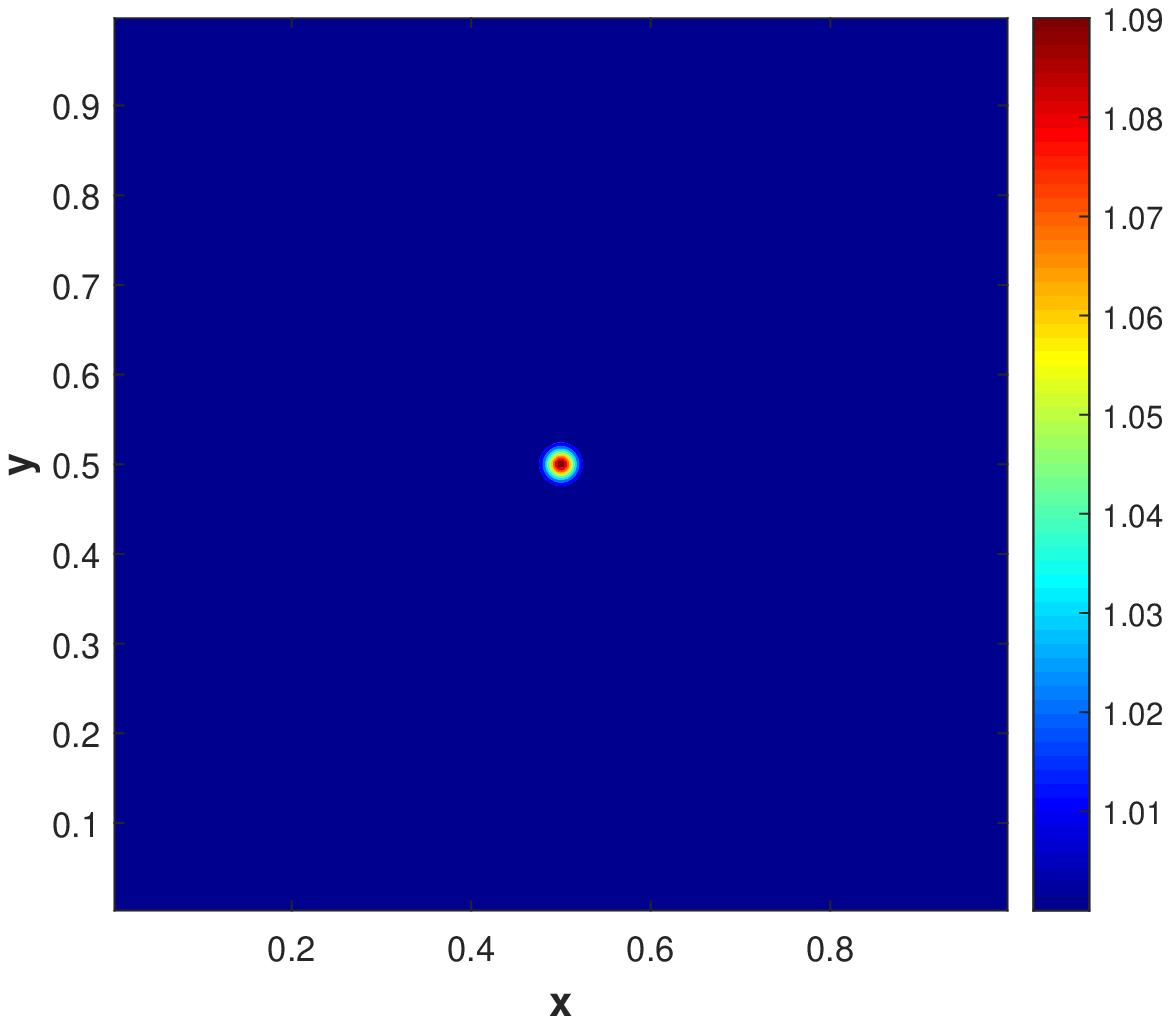}
  }
  \subfigure[water height, $t = 0.35$]{
  \centering
     \includegraphics[width= 6cm,scale=1]{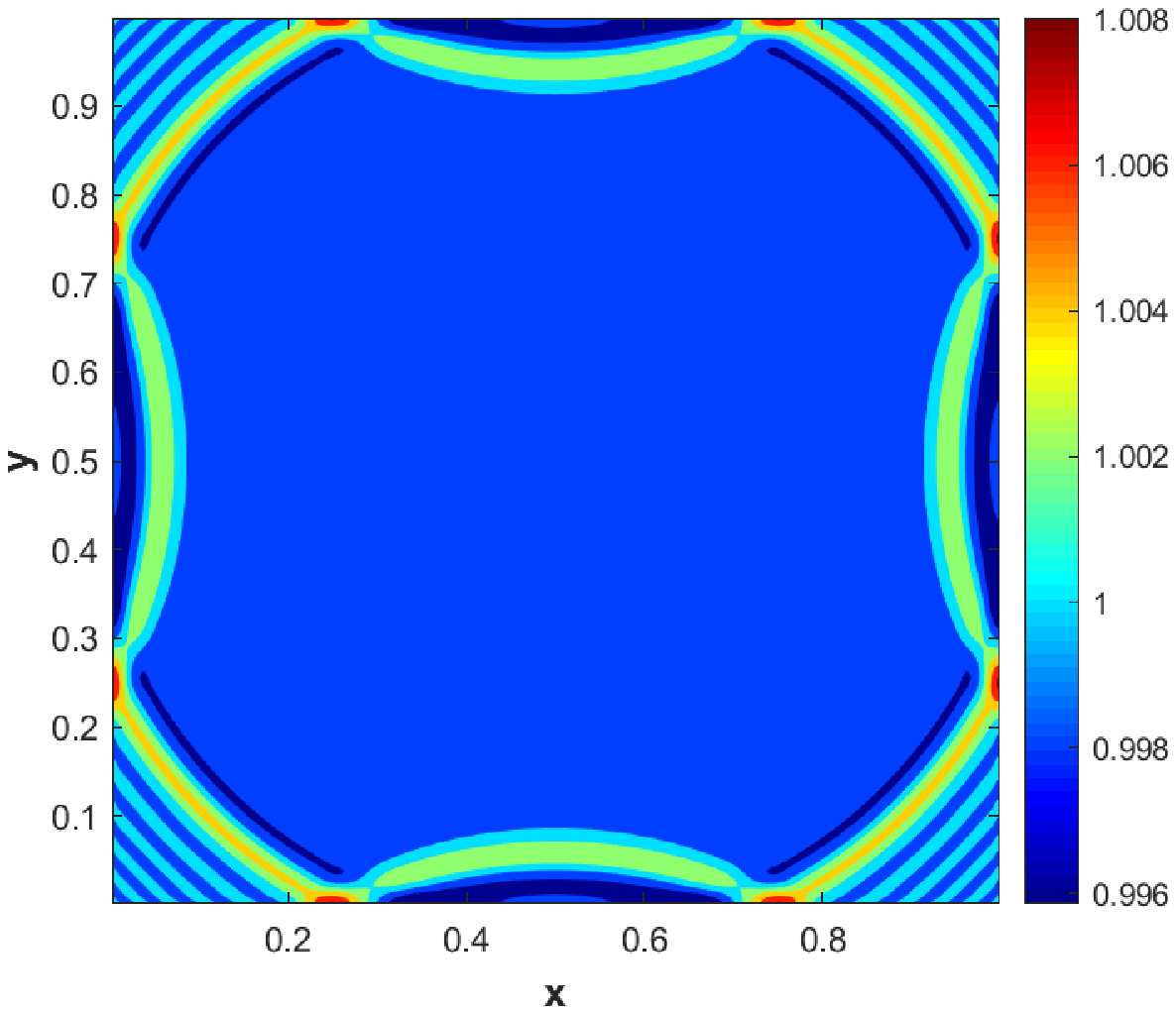}
  }

  \subfigure[water height, $t = 0.6$]{
  \centering
     \includegraphics[width= 6cm,scale=1]{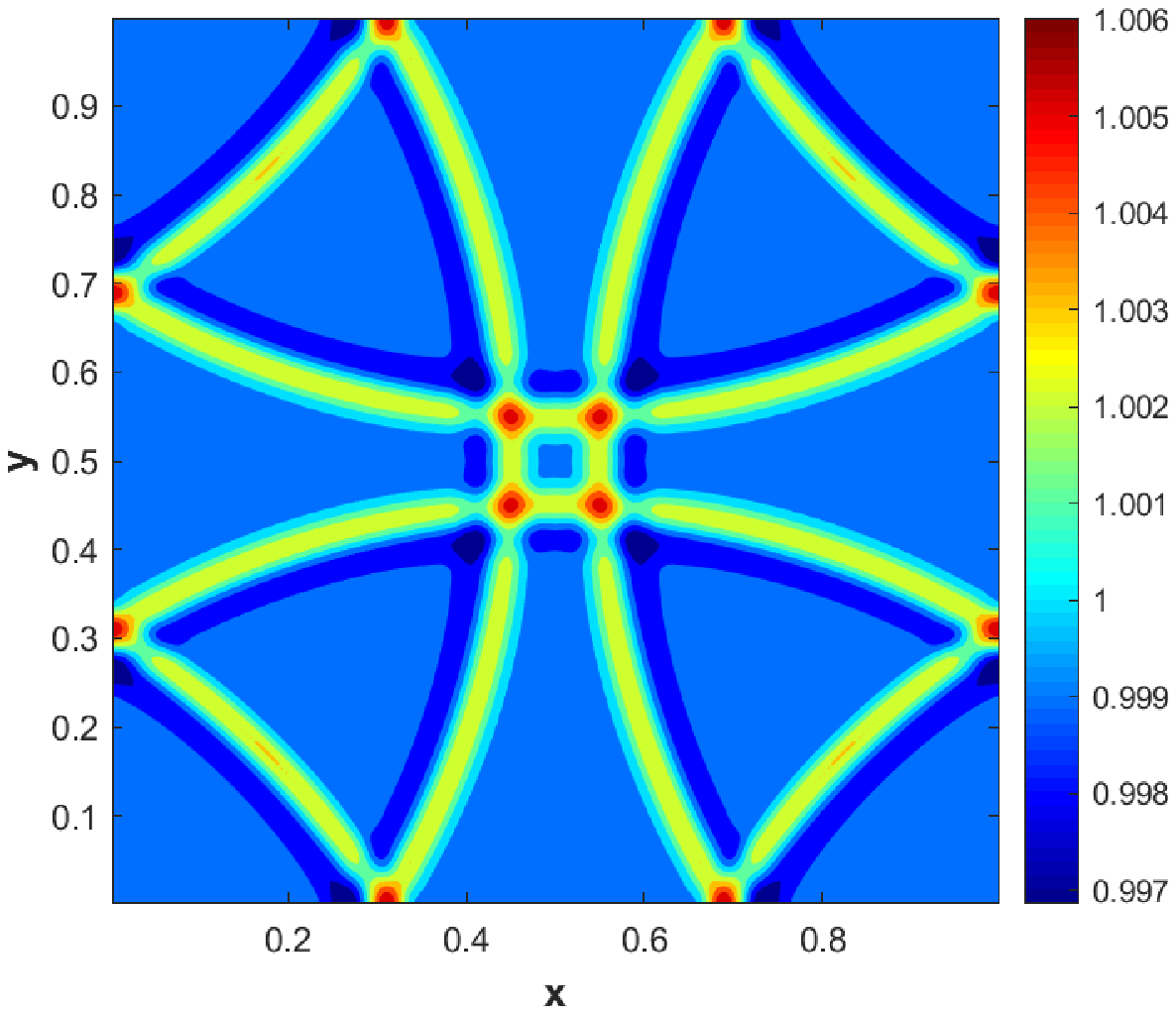}
  }
  \subfigure[water height, $t = 0.75$]{
  \centering
     \includegraphics[width= 6cm,scale=1]{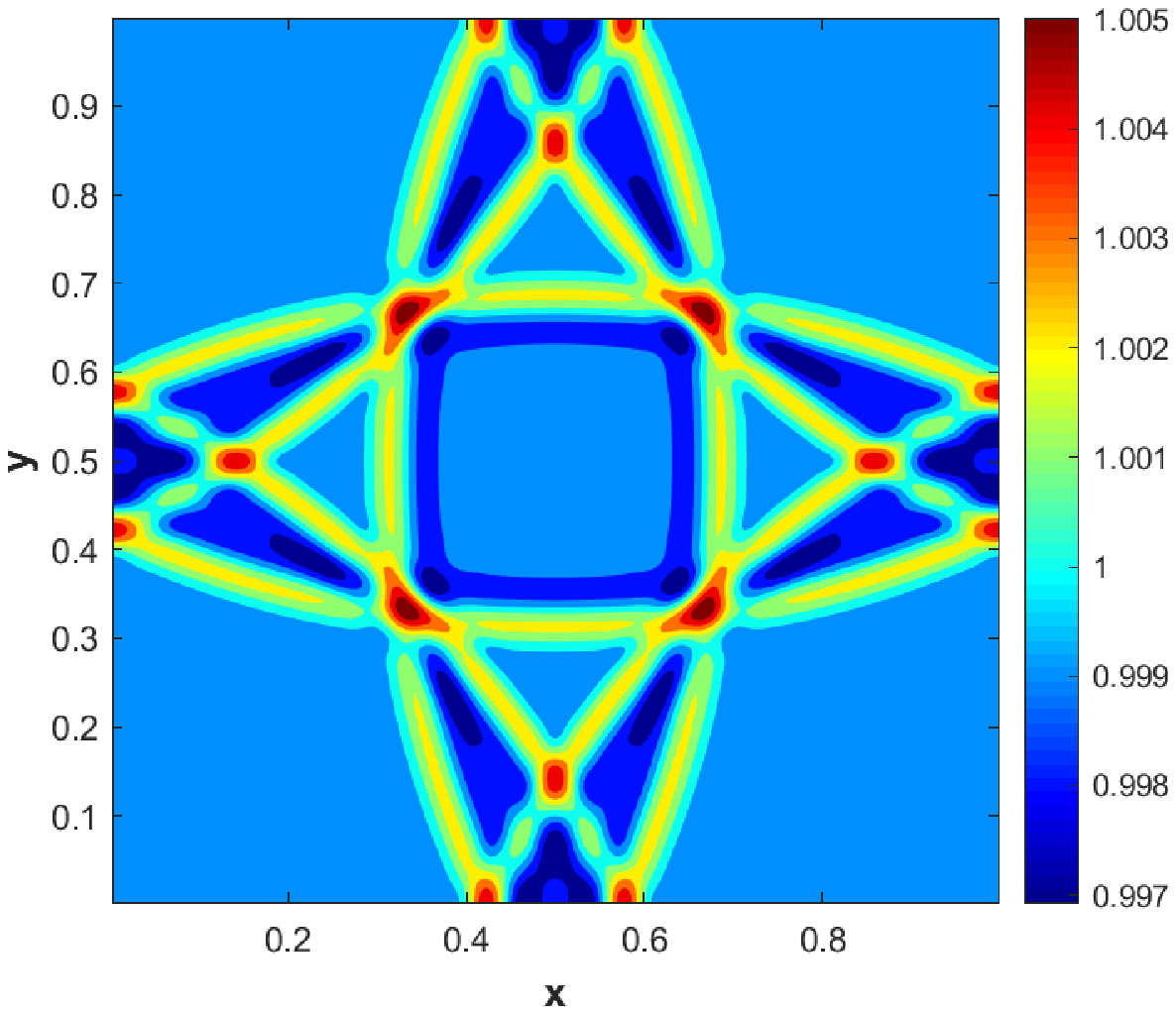}
  }

  \subfigure[water height, $t = 0.8$]{
  \centering
     \includegraphics[width= 6cm,scale=1]{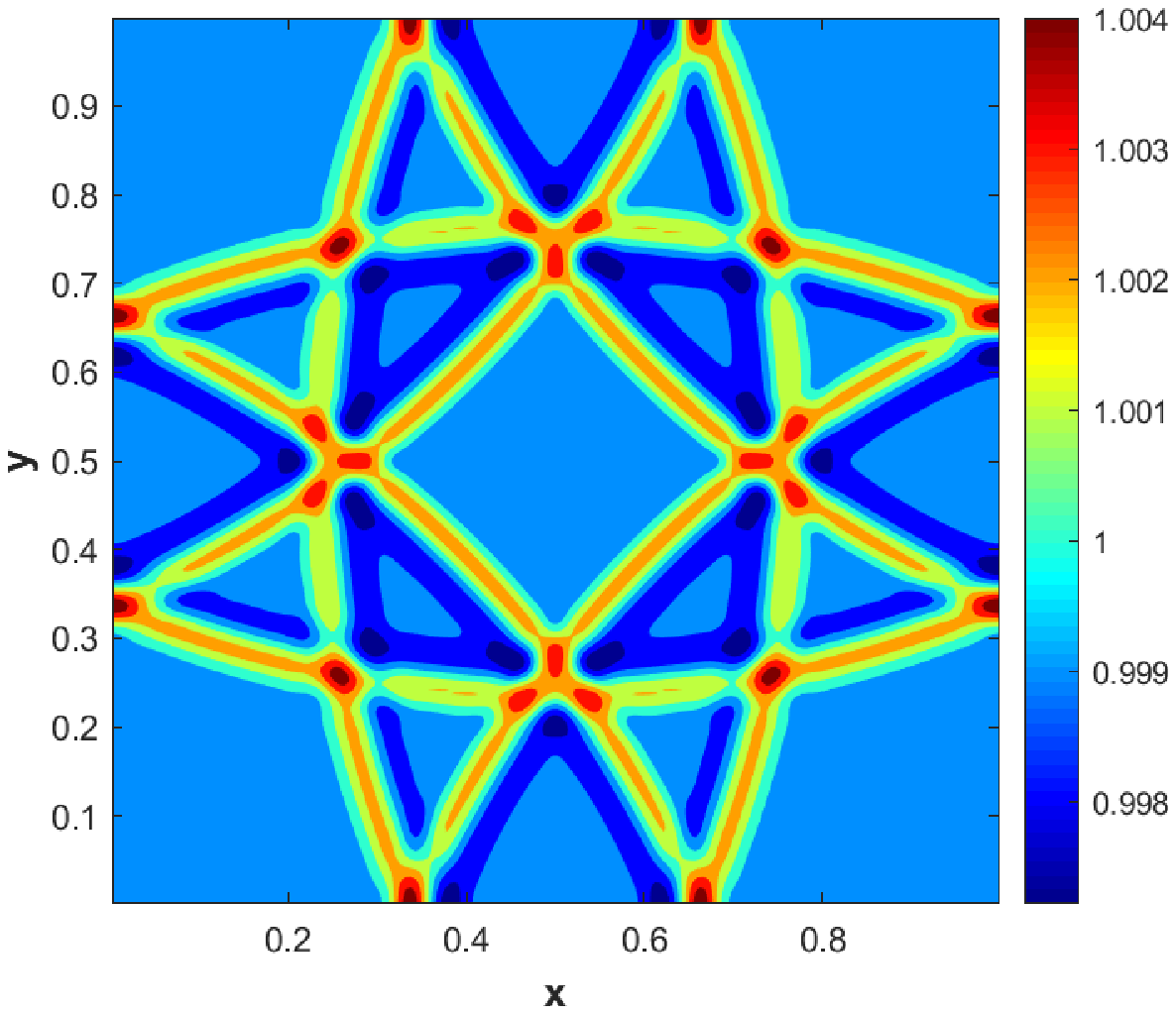}
  }
  \subfigure[water height, $t = 0.9$]{
  \centering
     \includegraphics[width= 6cm,scale=1]{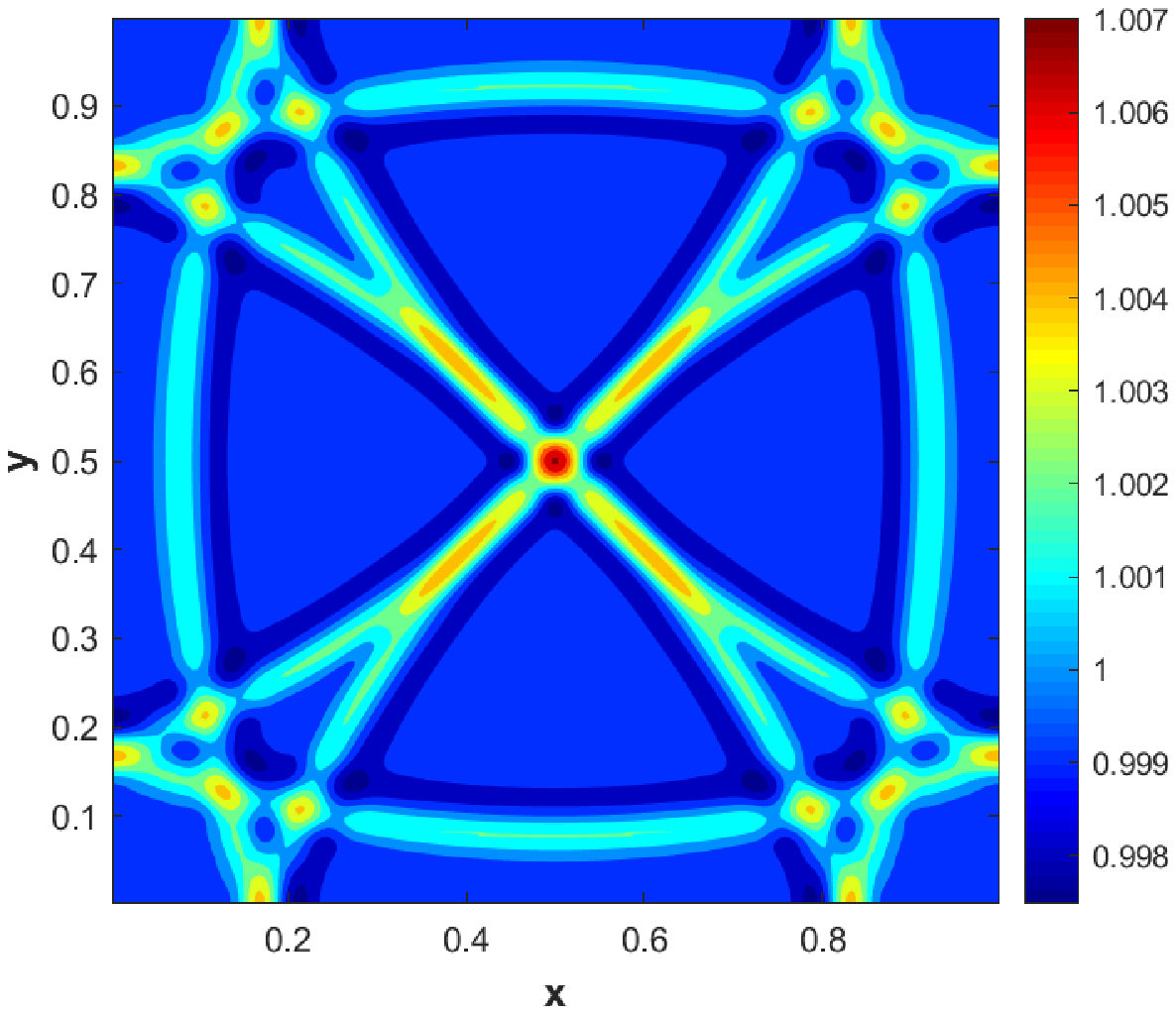}
  }
  \caption{Example \ref{waterdrop2D}: The contours of the  numerical  solutions of water height at times $t=0,0.6,0.75,0.8,0.9$ by the non well-balanced ALE-WENO scheme.}\label{water32D}
\end{figure}

\begin{example}{\bf Circular dam break problem}\label{dambr2D}
\end{example}
To illustrate the efficiency of the  positivity-preserving limiter on moving meshes in two dimensions, we test the complete break of a circular dam over a flat bottom topography $b(x,y)=0$ which is introduced in \cite{xing2013positivity}. The dam is located at $r = \sqrt{x^2+y^2}=60$ on a square domain $[-100,100]^2$.  Initial conditions are given by
 \begin{equation}\label{damini2D}
  \begin{aligned}
    &h(x,y,0) = \left\{\begin{array}{lll}
    10,&\text{if}\ r\leq 60, \\
    0,&\text{otherwise},\\
    \end{array}\right.\ u(x,y,0) = v(x,y,0) = 0,
  \end{aligned}
 \end{equation}
and the CFL number is taken as 0.08. We compute until $t = 1.75$ with the  $200\times 200$ moving mesh, and only show the numerical solutions using the non well-balanced ALE-WENO scheme in Fig. \ref{dambrae2D} due to the very similar numerical performance of other schemes. Table \ref{minwater22} shows the minimum of the water height. We can observe the excellent resolution and non-oscillatory near wet and dry fronts.

\begin{table}[htbp]
  \centering
  \caption{Example \ref{dambr2D}: The minimum of the water height at $t=1.75$ with  initial condition (\ref{damini2D}).}\label{minwater22}
  \begin{tabular}{c c c c }
   \toprule
    \multirow{1}{*} {Scheme} &{non well-balanced scheme}\\
    \midrule
   $\min h$    &   1.00E{-11}\\
     \bottomrule
  \end{tabular}
\end{table}

\begin{figure}[htbp]
  \centering
  \subfigure[3D view of the surface level]{
  \centering
     \includegraphics[width= 6.5cm,scale=1]{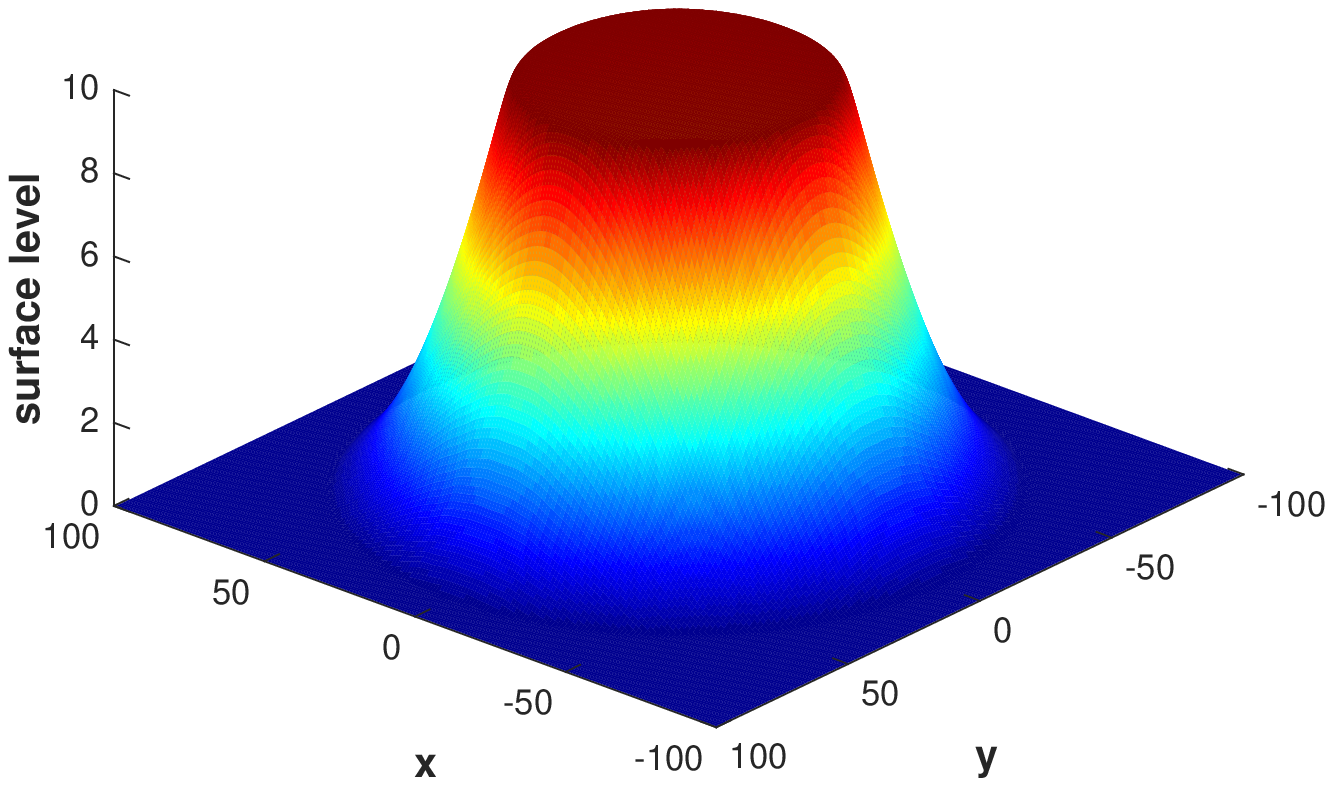}
  }
  \subfigure[the contours of the surface level]{
  \centering
     \includegraphics[width= 6cm,scale=1]{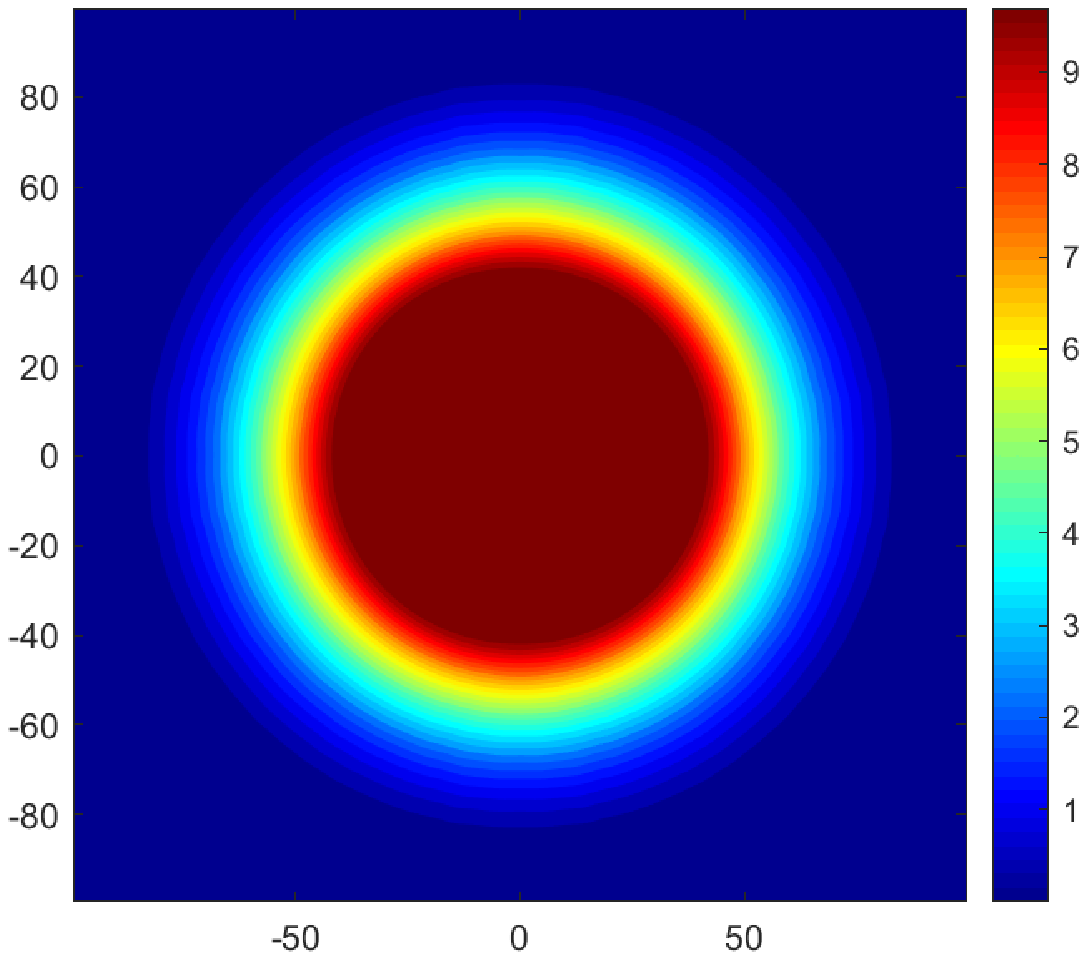}
  }
  \caption{Example \ref{dambr2D}:  Left: 3D view of the surface level; Right: the contours of the  surface level, by the non well-balanced ALE-WENO scheme on the $200\times 200$ moving mesh at time $t=1.75$.}\label{dambrae2D}

\end{figure}

\begin{example}{\bf Long wave resonance in a parabolic basin}\label{parabolic2D}
\end{example}
We take the same example in \cite{vater2019limiter} to address the correct representation of a moving shoreline and test the positivity-preserving property of our schemes with a non-flat bottom topography.
Let the computation domain be $[-4000,4000]^2$, we take a parabolic bottom topography $$b(x,y,0) =\dfrac{h_0}{a^2}(x^2+y^2),$$
and the initial conditions
 \begin{equation}\label{paini2D}
  \begin{aligned}
    &h(x,y,0) = \max\left\{ 0,h_0\left( \dfrac{\sqrt{1-A^2}}{1-A} - \dfrac{x^2+y^2}{a^2}\dfrac{1-A^2}{(1-A)^2} \right) \right\},\
     u(x,y,0) = v(x,y,0) = 0,
  \end{aligned}
 \end{equation}
where $A = \dfrac{a^4-r_0^4}{a^4+r_0^4}$, $h_0 = 1,r_0 = 2000,a=2500.$ The analytic solution is given by
 \begin{equation}\label{paexact2D}
  \begin{aligned}
    &h(x,y,t) = \max\left\{ 0,h_0\left( \dfrac{\sqrt{1-A^2}}{1-A\cos(\kappa t)} - \dfrac{x^2+y^2}{a^2}\dfrac{1-A^2}{(1-A\cos(\kappa t))^2} \right) \right\},\\
    & u(x,y,t) = \dfrac{\kappa A\sin(\kappa t)}{2(1-A\cos(\kappa t))}x,\quad  v(x,y,t) = \dfrac{\kappa A\sin(\kappa t)}{2(1-A\cos(\kappa t))}y,
  \end{aligned}
 \end{equation}
which are periodic with the period $T = {2\pi}/{\kappa}$ and $\kappa = \sqrt{8gh_0}/a$. Here, the CFL number is taken as 0.08 throughout our calculation. We  compute the numerical solutions on moving meshes until time $T$ with $100\times100$ uniform grid cells at the beginning,  seeing the surface level along the line $y=0$ in Fig. \ref{surfac} and the minimum of the water height at different times in Table \ref{minwaterp2D}.  Our numerical solutions using either the non well-balanced ALE-WENO scheme or the well-balanced ALE-WENO hybrid schemes can reach a nice agreement with the exact solution, which indicate the effectiveness of our schemes under the ALE framework.

\begin{figure}[htb]
  \centering
  \subfigure[the surface level at time $t = T/6$]{
  \centering
     \includegraphics[width= 6.5cm,scale=1]{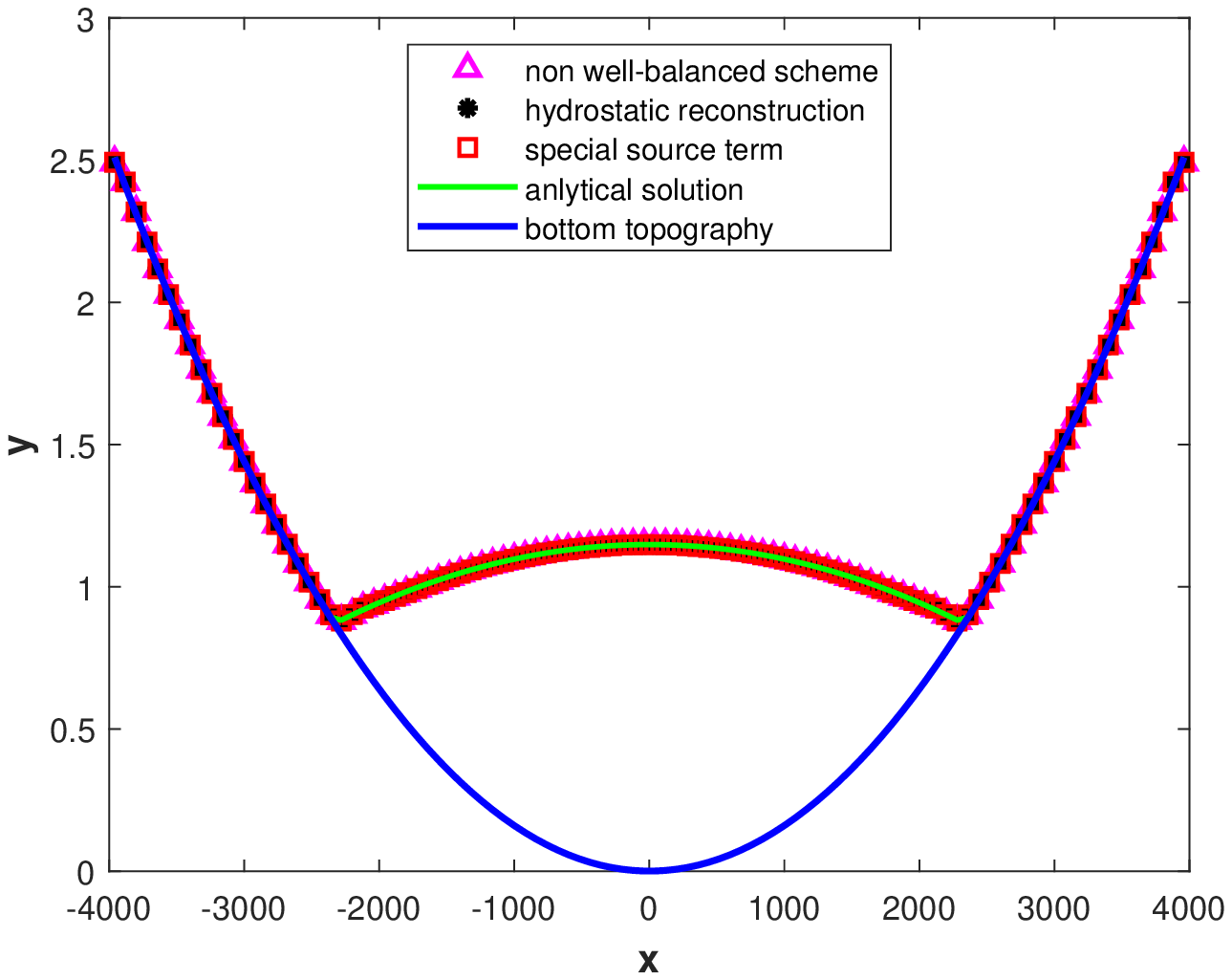}
  }
  \subfigure[the surface level at time $t = T/2$]{
  \centering
     \includegraphics[width= 6.5cm,scale=1]{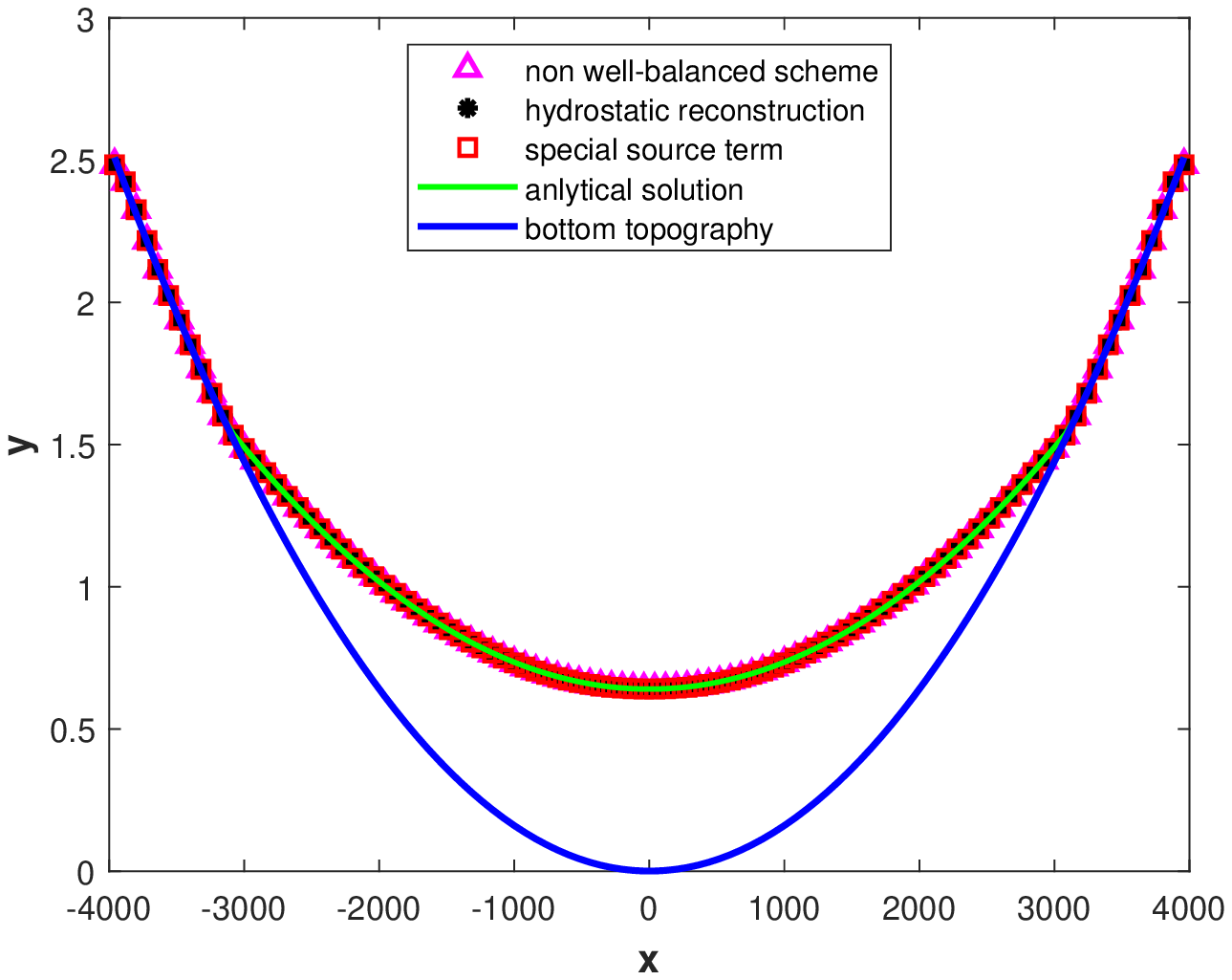}
  }
  \subfigure[the surface level at time $t = 3T/4$]{
  \centering
     \includegraphics[width= 6.5cm,scale=1]{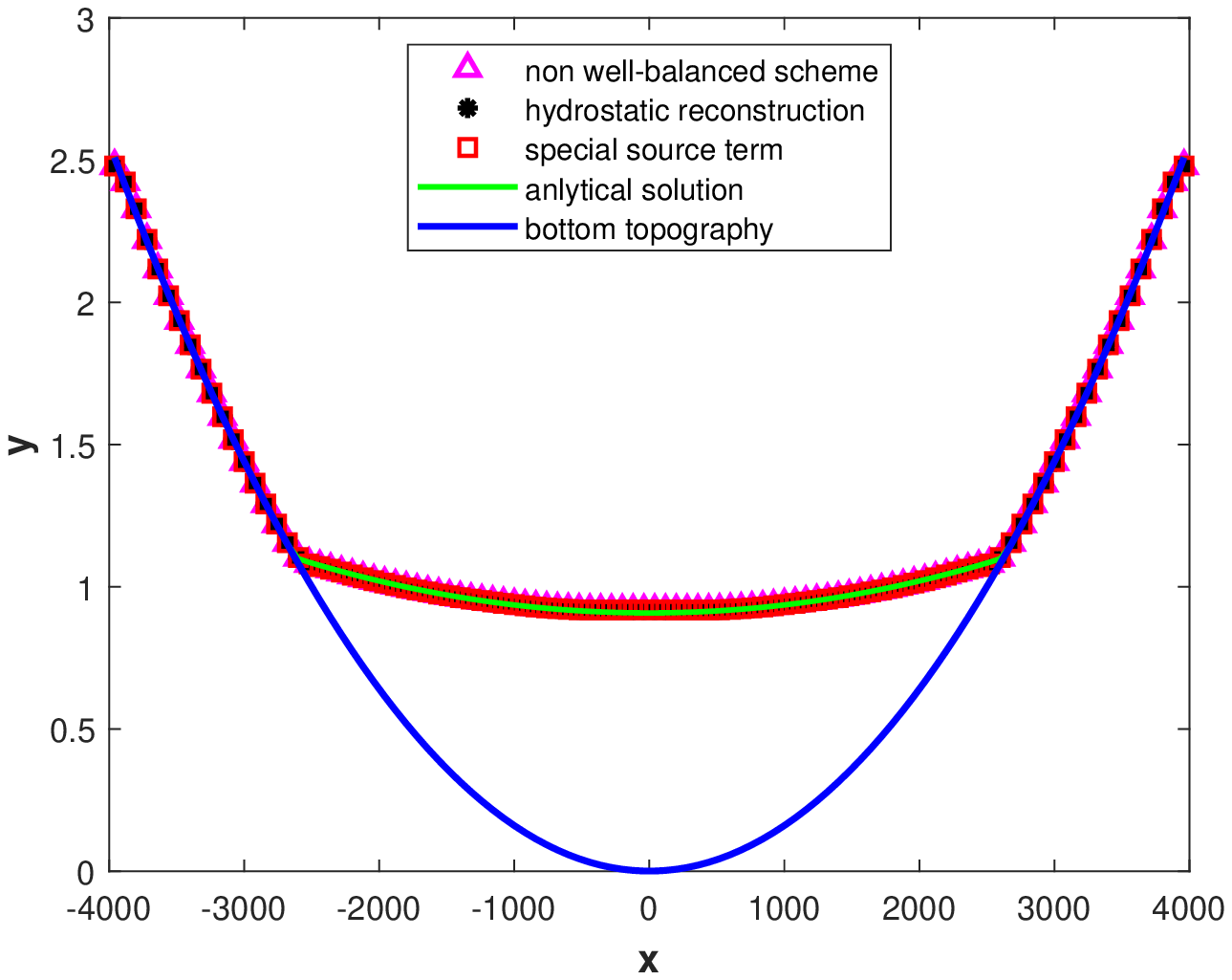}
  }
  \subfigure[the surface level at time $t = T$]{
  \centering
     \includegraphics[width= 6.5cm,scale=1]{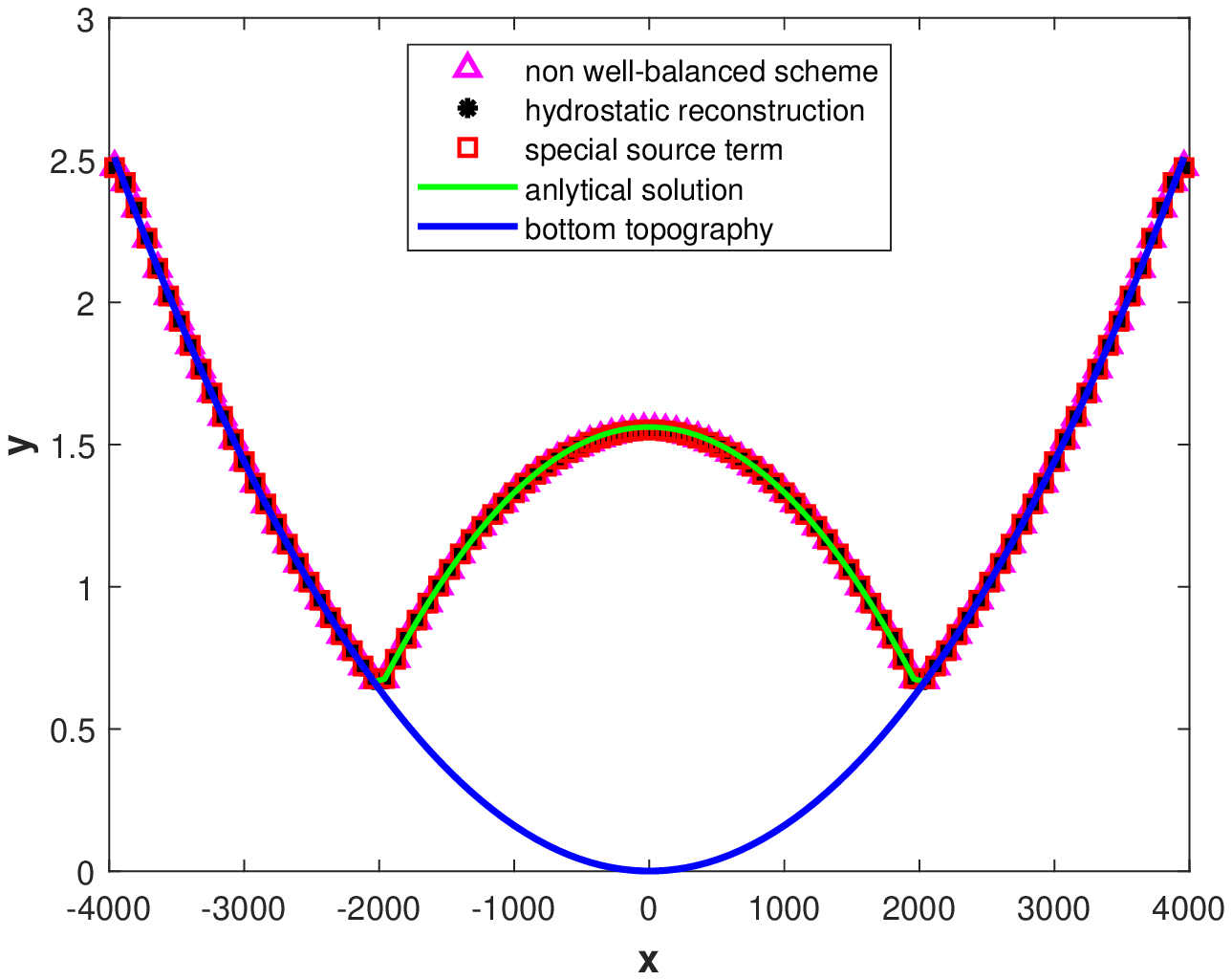}
  }
  \caption{Example \ref{parabolic2D}: Numerical solutions of the surface level along the line $y=0$  at times $T/6,T/2,3T/4,T$ (from top left to bottom right), by ALE-WENO hybrid schemes on the $100\times 100$ moving mesh.}\label{surfac}
\end{figure}

\begin{table}[htb]
  \centering
  \caption{Example \ref{parabolic2D}: The minimum  water height at different times by ALE-WENO hybrid schemes.}\label{minwaterp2D}
  \begin{tabular}{c c c c c c c c c c }
   \toprule
    \multirow{1}{*} {Scheme} &{$t = T/6$}&{$t = T/2$}&{$t = 3T/4$}&{$t = T$}\\
    \midrule
   non well-balanced  scheme   &  1.18E{-11} & 1.04E{-11}&  1.34E{-11}&  1.34E{-11} \\
   hydrostatic reconstruction   &  1.19E{-11} & 1.03E{-11}&  1.16E{-11}&  1.23E{-11} \\
   special source term treatment&  1.63E{-11} & 1.69E{-11}&  1.17E{-11}&  1.34E{-11} \\
     \bottomrule
  \end{tabular}
\end{table}

\begin{example}{\bf Test for the  well-balanced  property}\label{exactC2D}
\end{example}
To verify our developed  well-balanced schemes on moving meshes can  maintain the well-balanced property over a non-flat bottom on an unit square $[0,1]\times[0,1]$, we choose the bottom topography:
\begin{equation}\label{smo2D}
  b(x,y)=0.8e^{-50\left((x-0.5)^2+(y-0.5)^2\right)},
\end{equation}
and the initial conditions
$$h(x,y,0)+b(x,y) = 1,\quad hu(x,y,0)=0, \quad hv(x,y,0)=0.$$
We compute the solution up  to  $t = 0.1$ on moving grids (\ref{wangge2}), using  $100\times 100 $ uniform rectangular mesh at the beginning.
The $L^1$ and $L^{\infty}$ errors at double precision for the surface level $h+b$ and the discharge $hu$, $hv$ are listed in Table \ref{2Dsmooth}. It can also  be clearly seen that the hydrostatic equilibrium state is  exactly preserved in two-dimensional.
\begin{table}[htb]
  \centering
  \caption{Example \ref{exactC2D}: $L^1$ and $L^{\infty}$ errors by the ALE-WENO hybrid schemes for the hydrostatic equilibrium state with a smooth bottom (\ref{smo2D}).}\label{2Dsmooth}
  \begin{tabular}{c c c c c c c c c c c c}
   \toprule
    \multirow{2}{*} {Scheme}&\multicolumn{3}{c}{$L^1$ error}&\multicolumn{3}{c}{$L^{\infty}$ error}\\
    \cmidrule(lr){2-4} \cmidrule(lr){5-7}
    &{$h+b$}&{$hu$}&{$hv$}&{$h+b$}&{$hu$}&{$hv$}\\
    \midrule
     non well-balanced scheme   & 1.27E{-06} &  3.63E{-06} & 3.52E{-06} &  2.01E{-05} & 2.40E{-05} & 2.30E{-05}\\
    hydrostatic reconstruction    & 4.87E{-16} &  1.14E{-15} & 8.71E{-16} &  2.44E{-15} & 5.49E{-15} & 4.27E{-15}\\
    special source term treatment & 4.87E{-16} &  1.13E{-15} & 8.64E{-16} &  2.44E{-15} & 4.73E{-15} & 3.96E{-15}\\
     \bottomrule
  \end{tabular}
\end{table}

\begin{example}{\bf A small perturbation of a two dimensional steady-state water}\label{perturbation2D}
\end{example}
The last classical example we test is given by LeVeque \cite{leveque1998balancing} which shows the capability of  our two  well-balanced ALE-WENO hybrid schemes for the perturbation of the hydrostatic state in two dimensions. Initial conditions are given by
  \begin{align*}
    &h(x,y,0) = \left\{\begin{array}{lll}
    1-b(x,y)+\epsilon, & \text{if} \ \ 0.05\leq x \leq 0.15, \\
    1-b(x,y),&\text{otherwise},\\
    \end{array}\right. \\
    &(hu)(x,y,0)=(hv)(x,y,0)=0,
  \end{align*}
with an elliptical hump bottom topography
$$b(x,y)=0.8e^{-5(x-0.9)^2-50(y-0.5)^2},$$
on the computational domain $[0,2]\times [0,1]$. We compute solutions on 200$\times$100  moving mesh  at different times for $\epsilon= 0.01$ and a smaller perturbation $\epsilon= 0.0001$. The contours of the surface level $h+b$ using the ALE-WENO hybrid scheme with special source term treatment and the non well-balanced ALE-WENO scheme for two different $\epsilon$ are plotted in Figs. \ref{surfacehydro2D} and  \ref{surfacespecial2D} respectively. Results by the ALE-WENO hybrid scheme with hydrostatic reconstruction are very similar, hence we omit it here.
It suggests that our well-balanced schemes perform better than the non well-balanced ALE-WENO scheme. They are capable of achieving high resolution and capturing the complex small features of the flow very well especially for the smaller perturbation.

\begin{figure}[htbp]
  \centering
  {\centering
     \includegraphics[width= 7cm,scale=1]{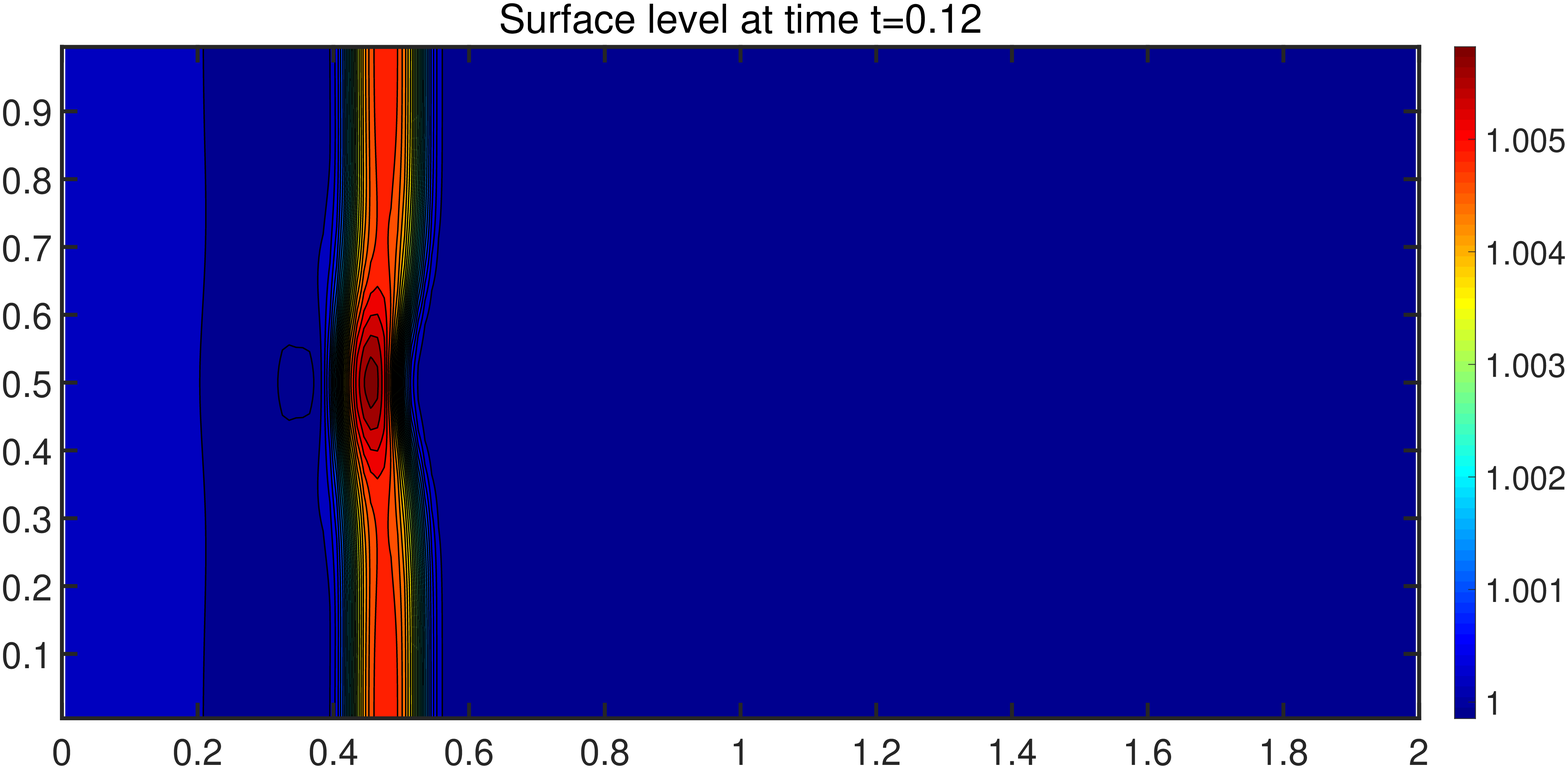}
  }
  {\centering
     \includegraphics[width= 7cm,scale=1]{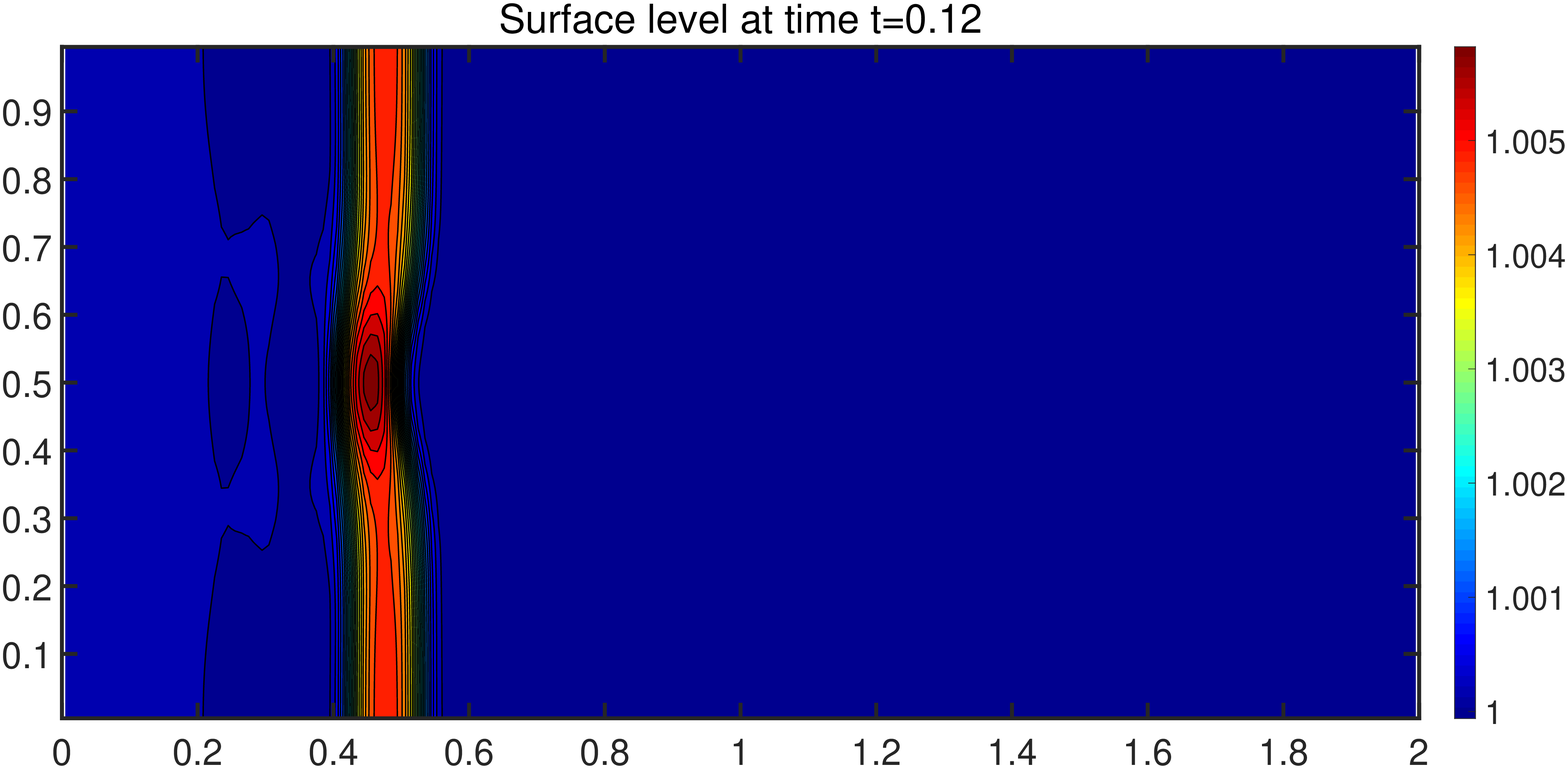}
  }
  {\centering
     \includegraphics[width= 7cm,scale=1]{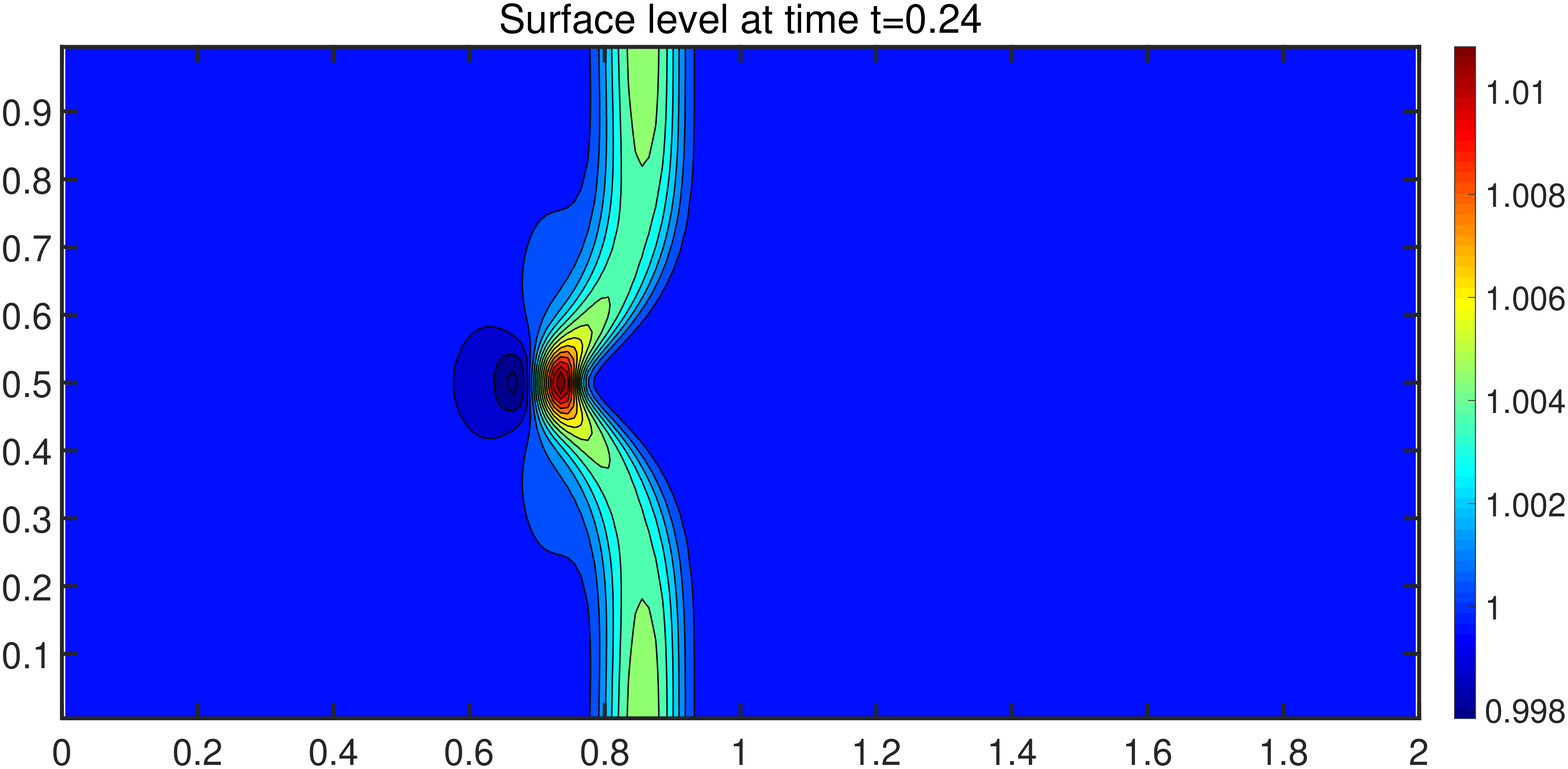}
  }
  {\centering
     \includegraphics[width= 7cm,scale=1]{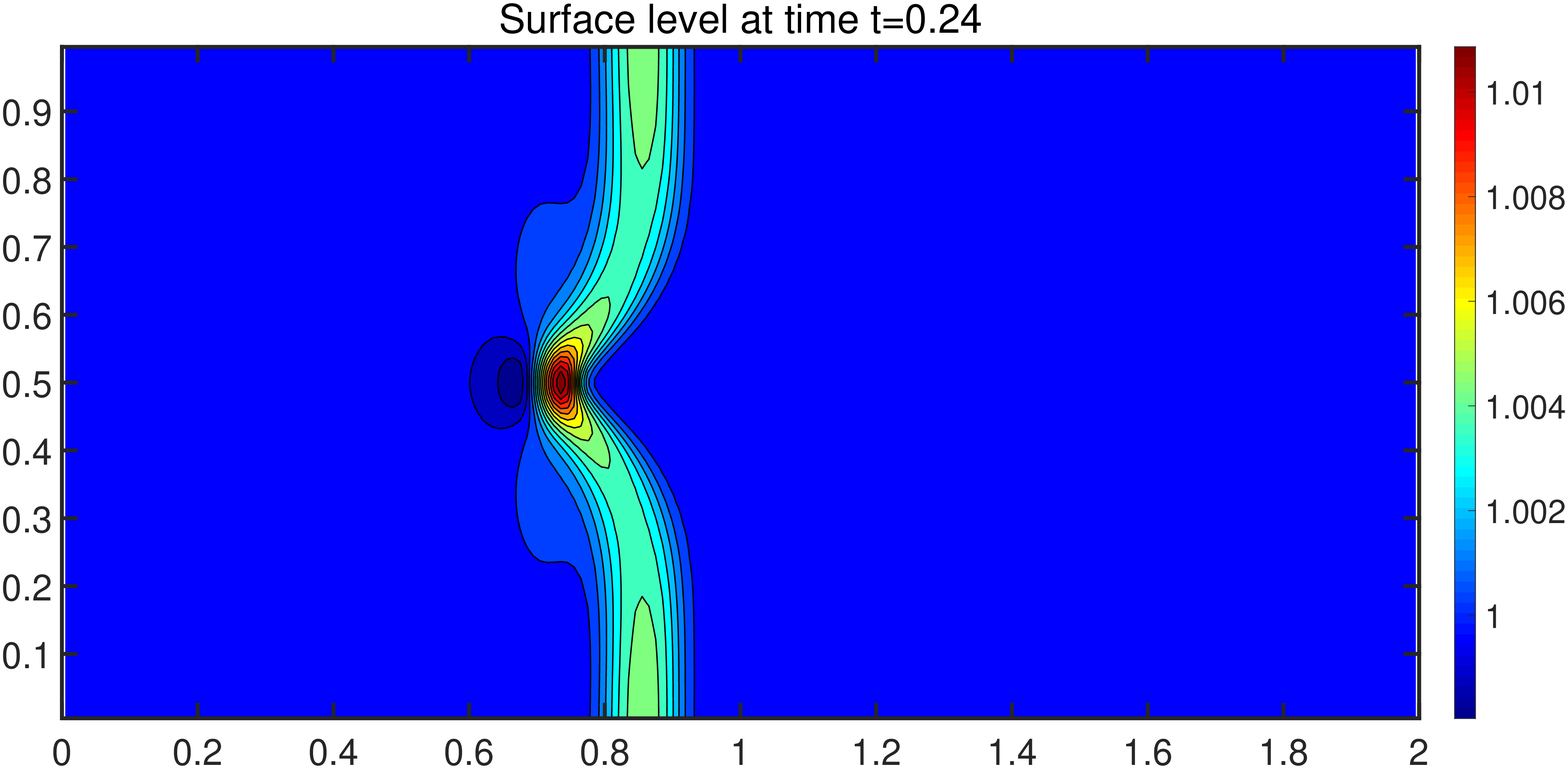}
  }
  {\centering
     \includegraphics[width= 7cm,scale=1]{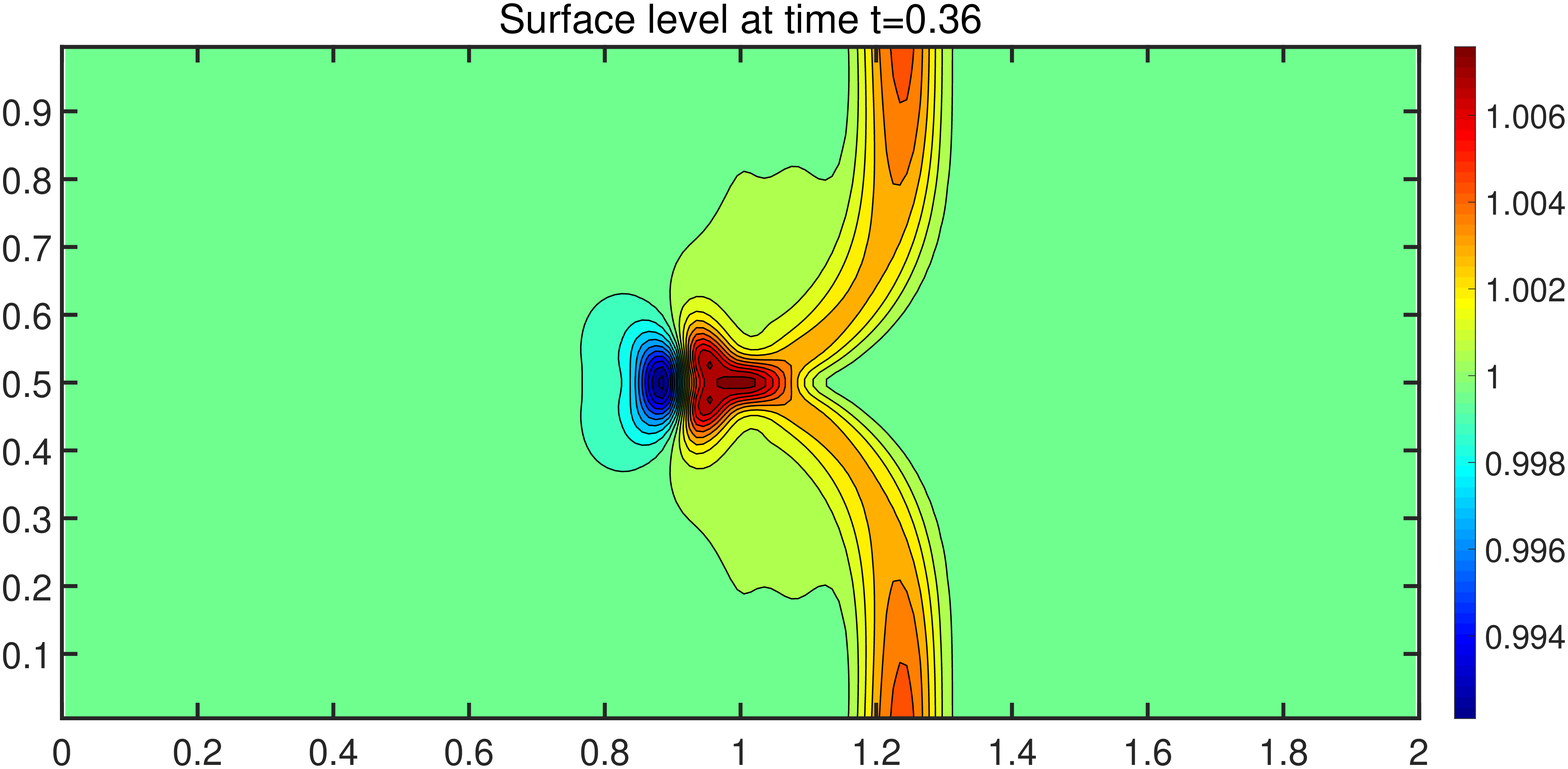}
  }
  {\centering
     \includegraphics[width= 7cm,scale=1]{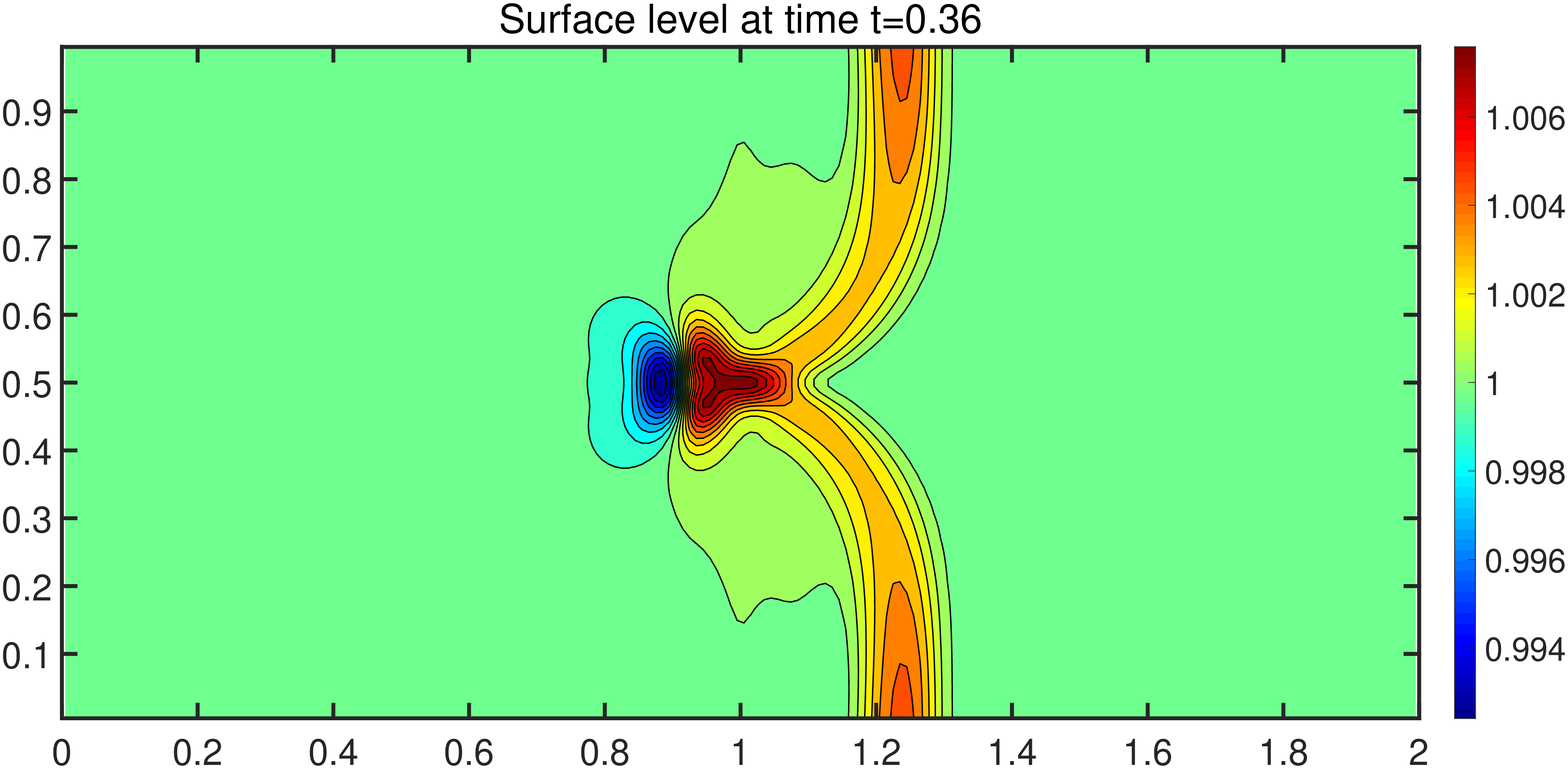}
  }
  {\centering
     \includegraphics[width= 7cm,scale=1]{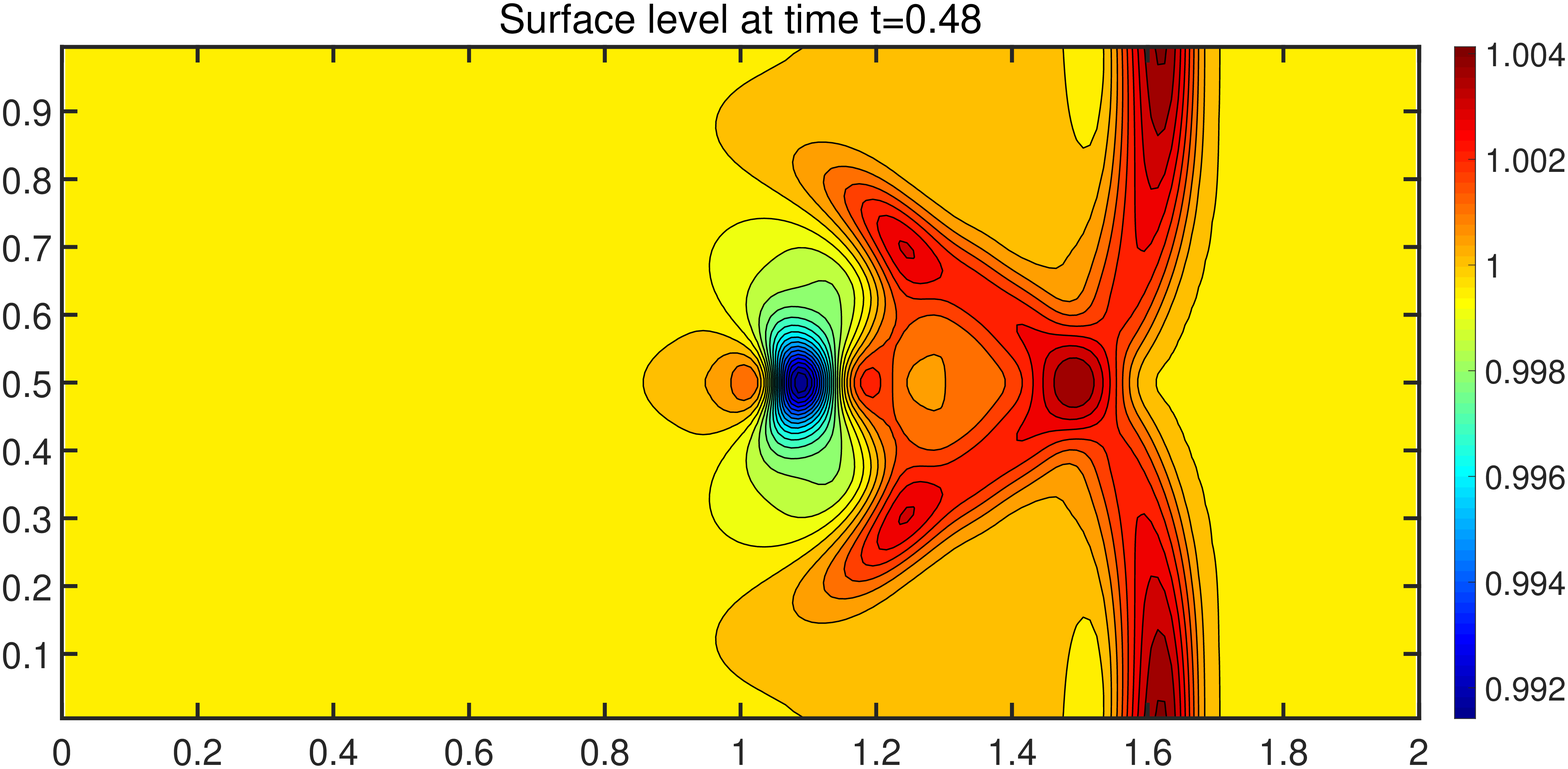}
  }
  {\centering
     \includegraphics[width= 7cm,scale=1]{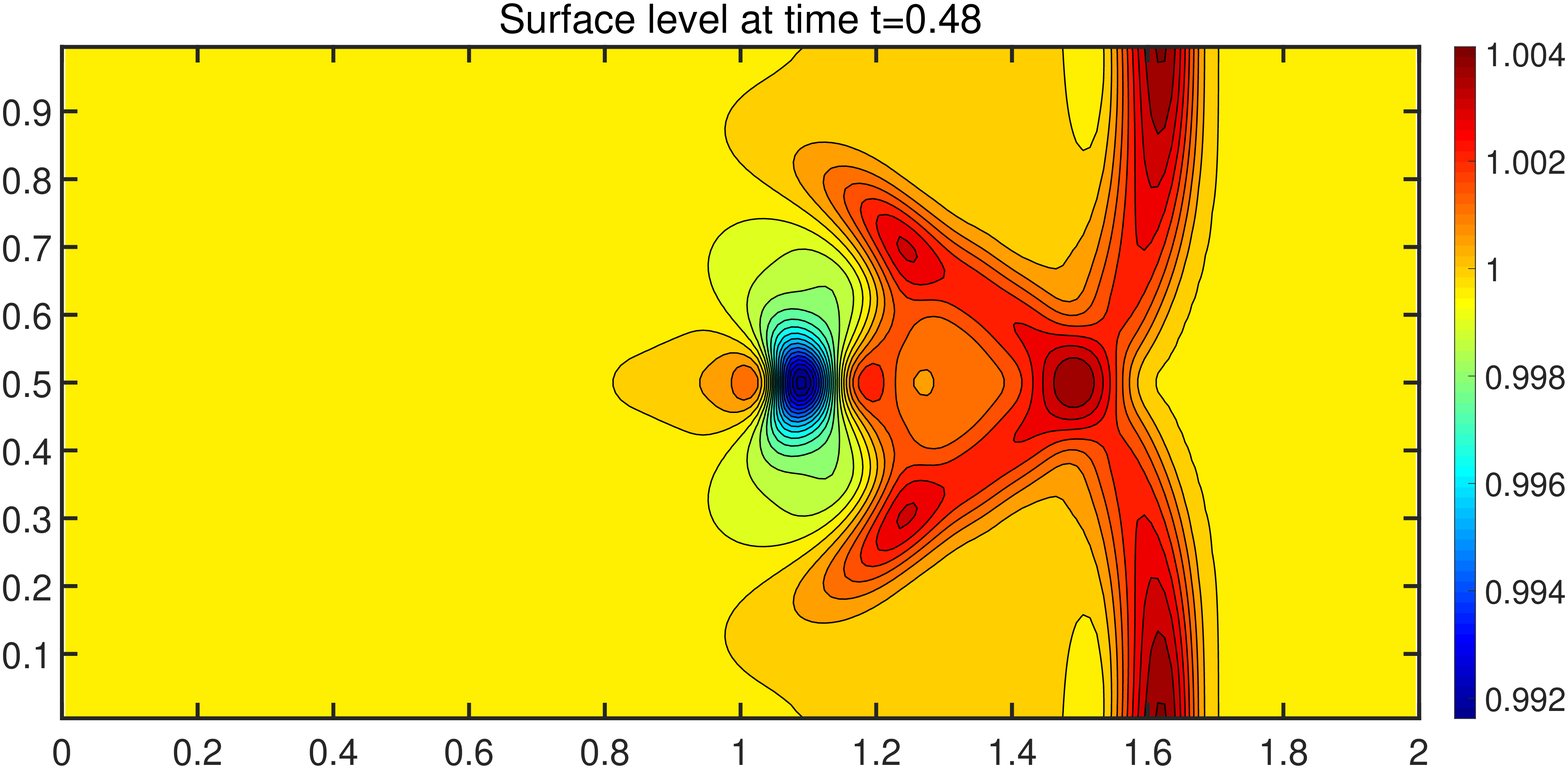}
  }
  {\centering
     \includegraphics[width= 7cm,scale=1]{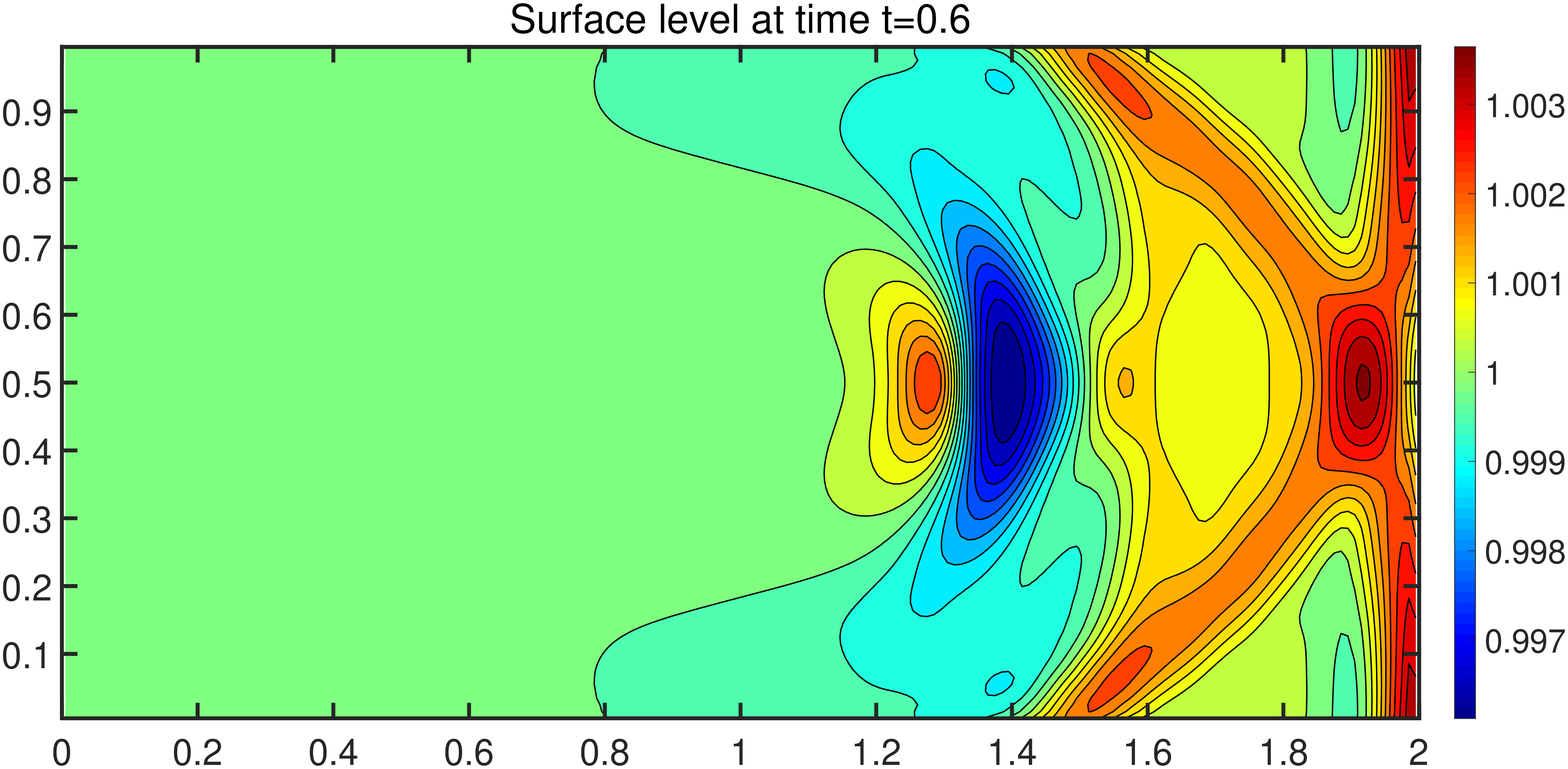}
  }
  {\centering
     \includegraphics[width= 7cm,scale=1]{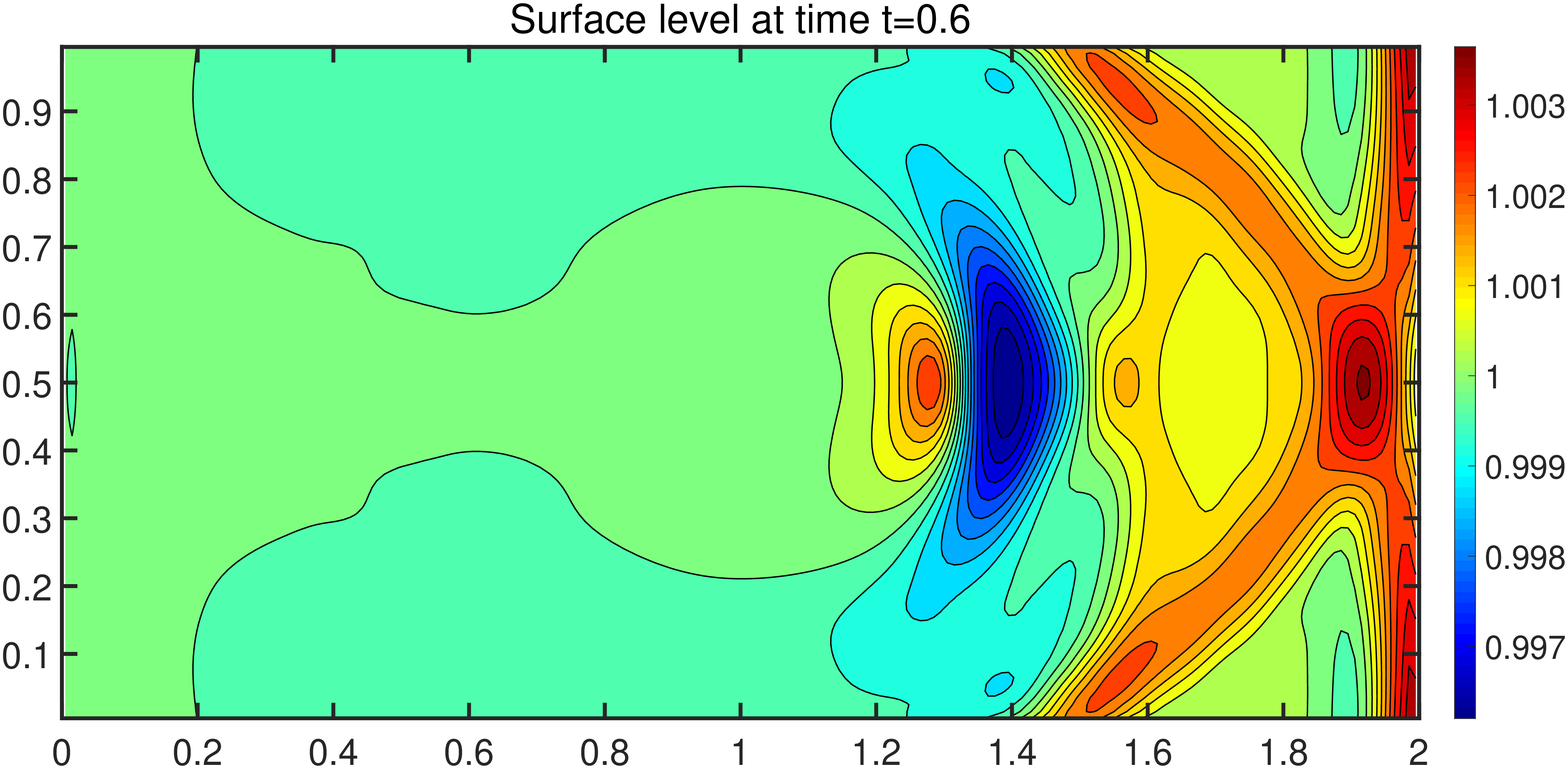}
  }
 \caption{Example \ref{perturbation2D}: The contours of the surface level with 30 uniformly spaced contour lines, by using 200$\times$100  moving mesh for $\epsilon= 0.01$. From top to bottom: at time $t=0.12$ from 0.99942 to 1.00656;  at time $t=0.24$ from 0.99318 to 1.01649;  at time $t=0.36$ from 0.98824 to 1.01161;  at time $t=0.48$ from 0.99023 to 1.00516;   at time $t=0.6$ from 0.99534 to 1.00629.  Left: results by the  ALE-WENO hybrid scheme with special source term treatment. Right: results by the non well-balanced ALE-WENO scheme.}\label{surfacehydro2D}
\end{figure}

\begin{figure}[htbp]
  \centering
   {\centering
     \includegraphics[width= 7cm,scale=1]{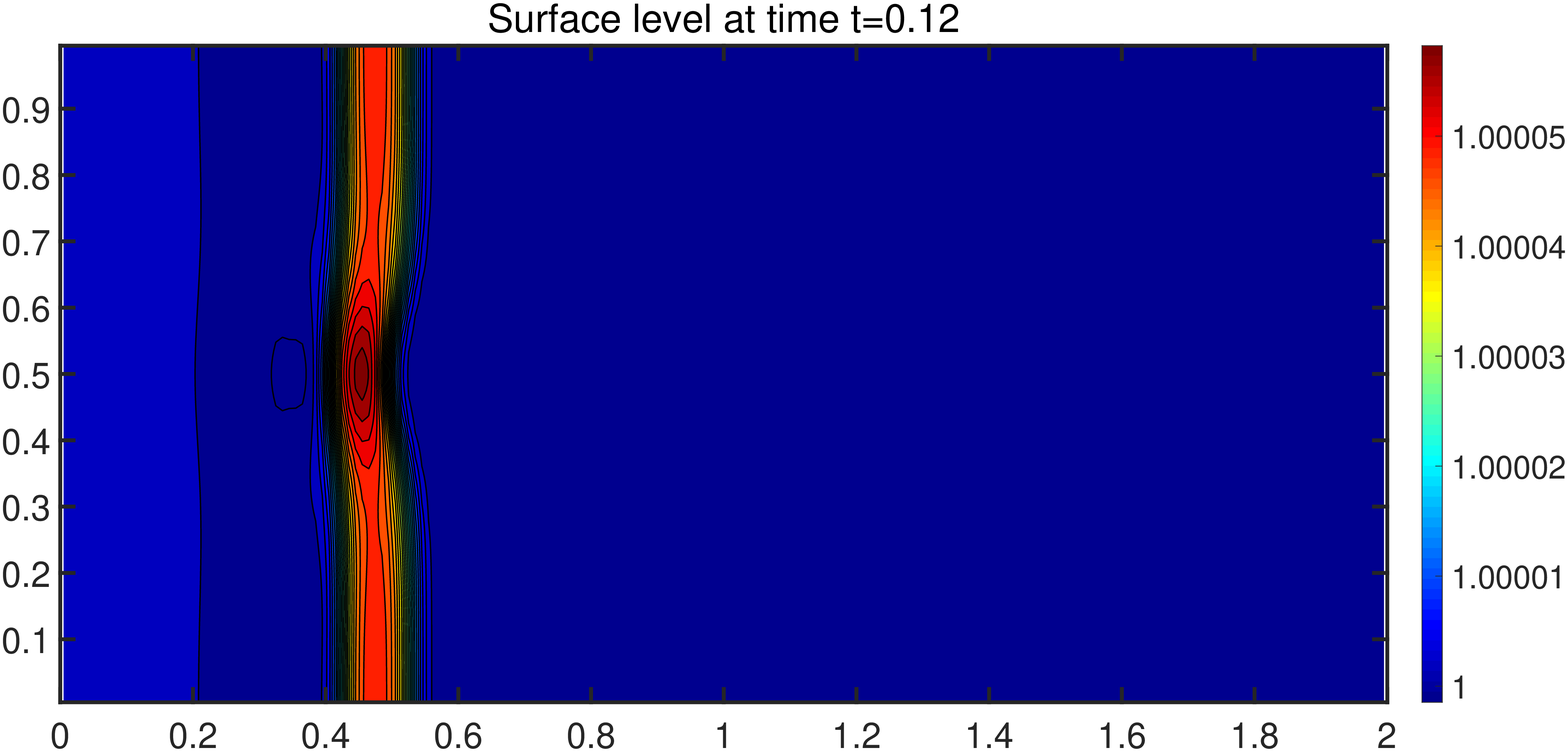}
  }
  {\centering
     \includegraphics[width= 7cm,scale=1]{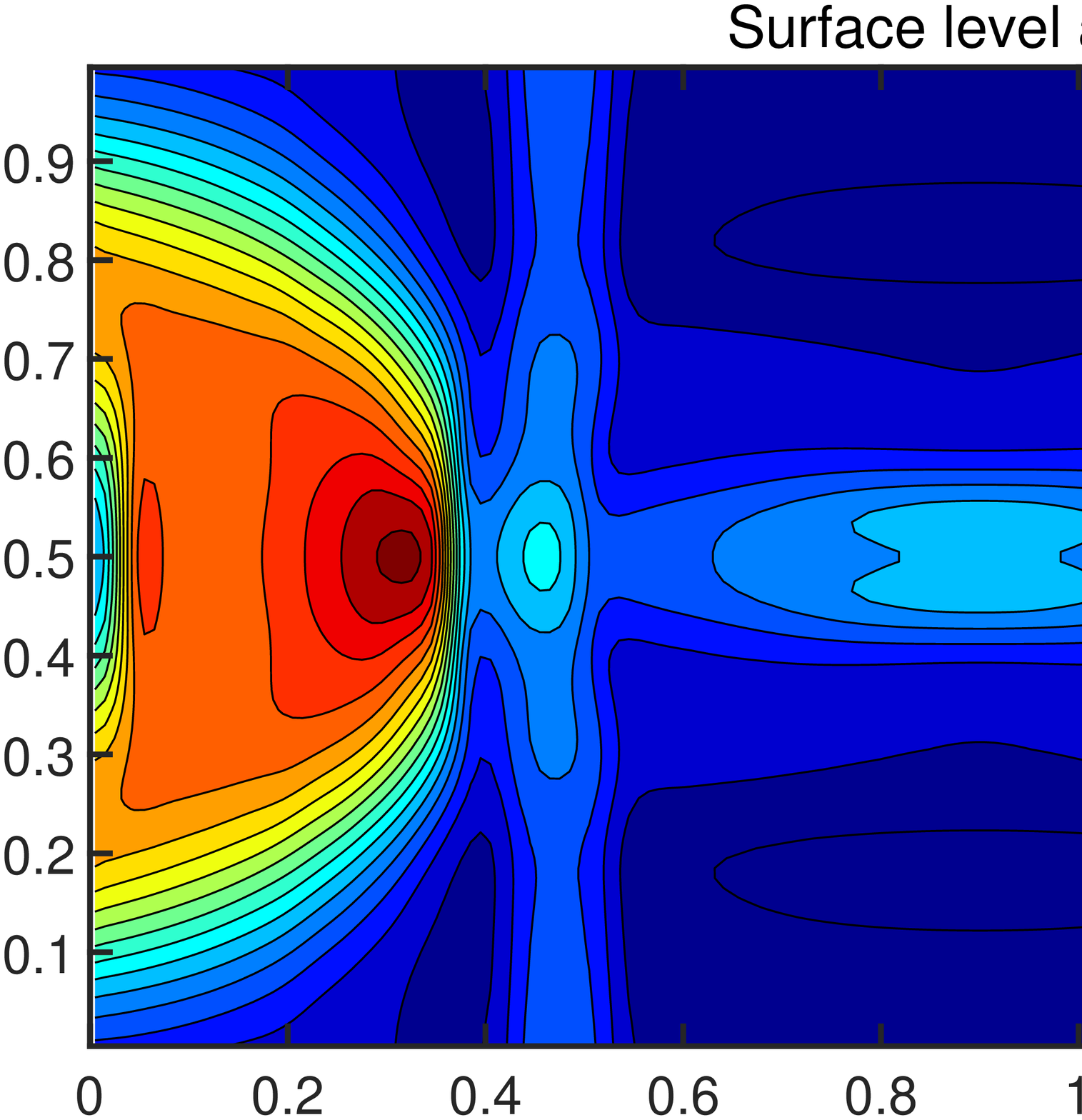}
  }
  {\centering
     \includegraphics[width= 7cm,scale=1]{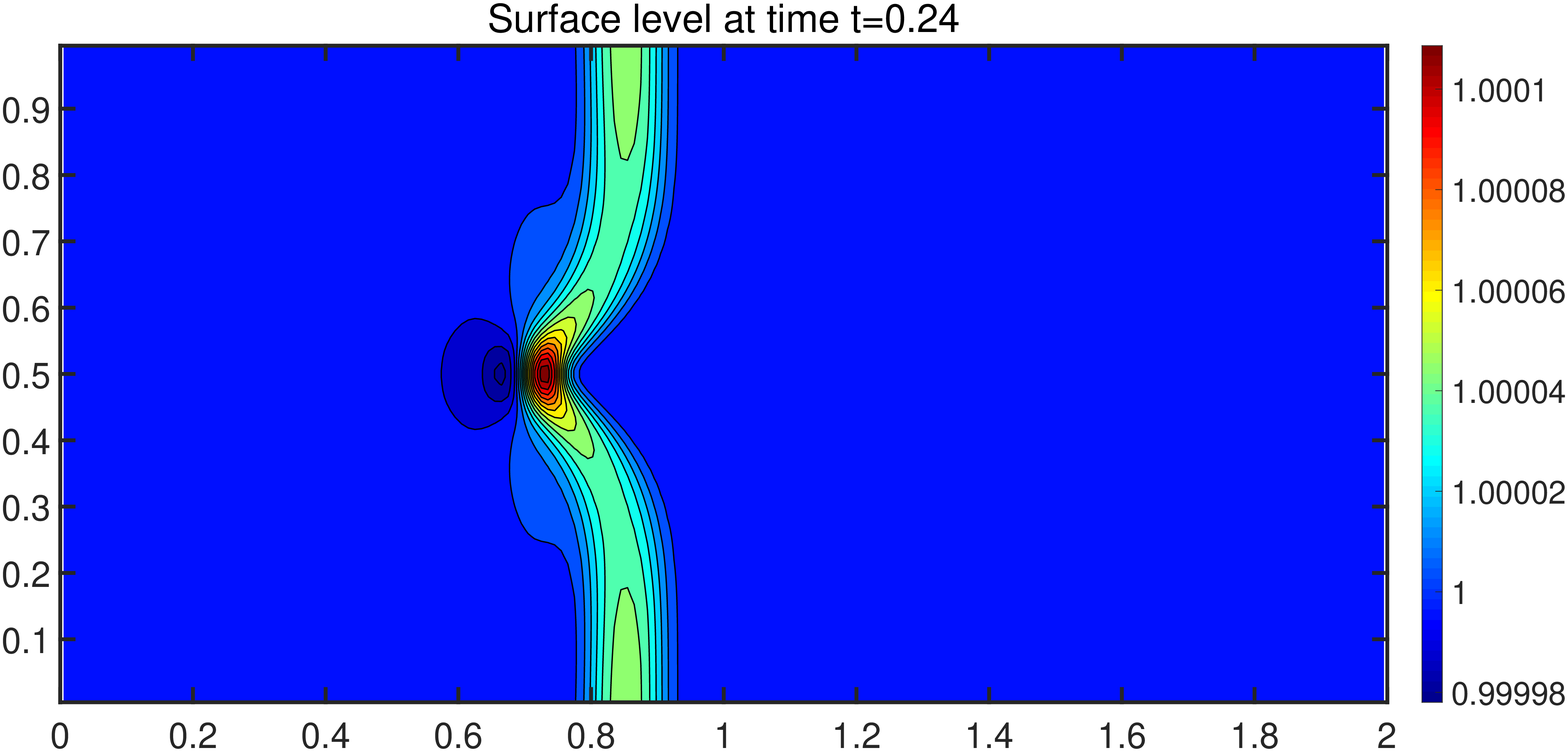}
  }
  {\centering
     \includegraphics[width= 7cm,scale=1]{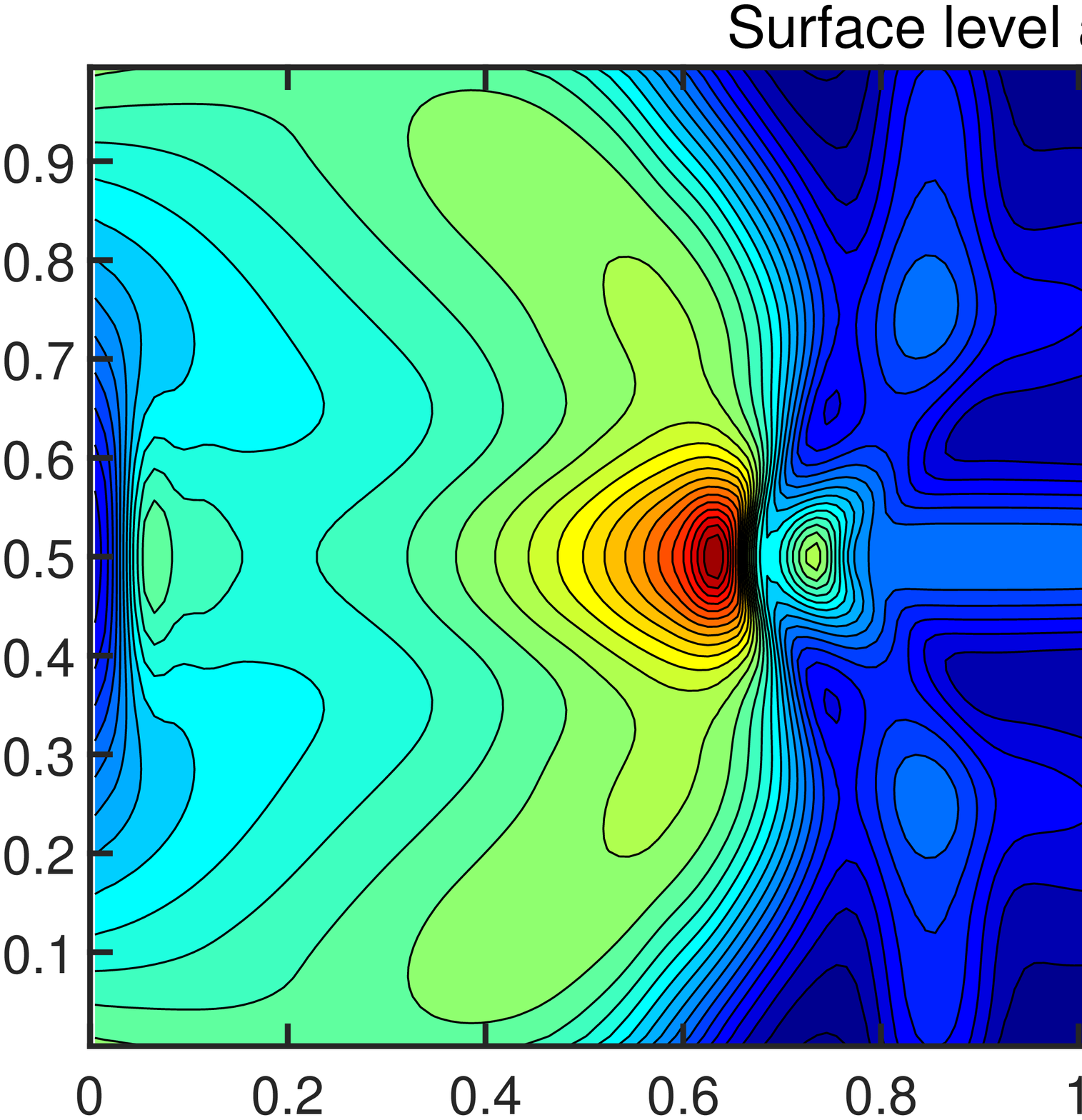}
  }
  {\centering
     \includegraphics[width= 7cm,scale=1]{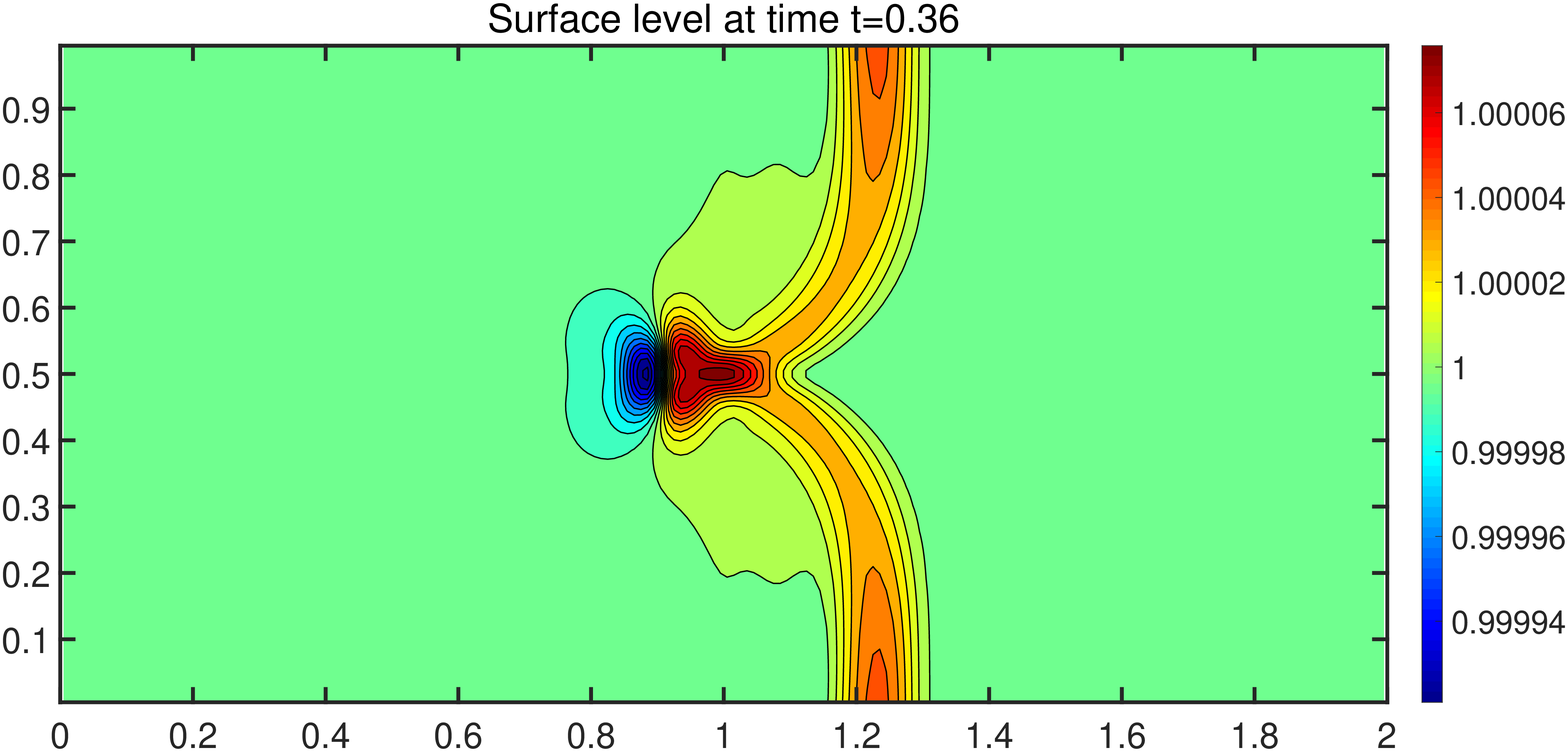}
  }
  {\centering
     \includegraphics[width= 7cm,scale=1]{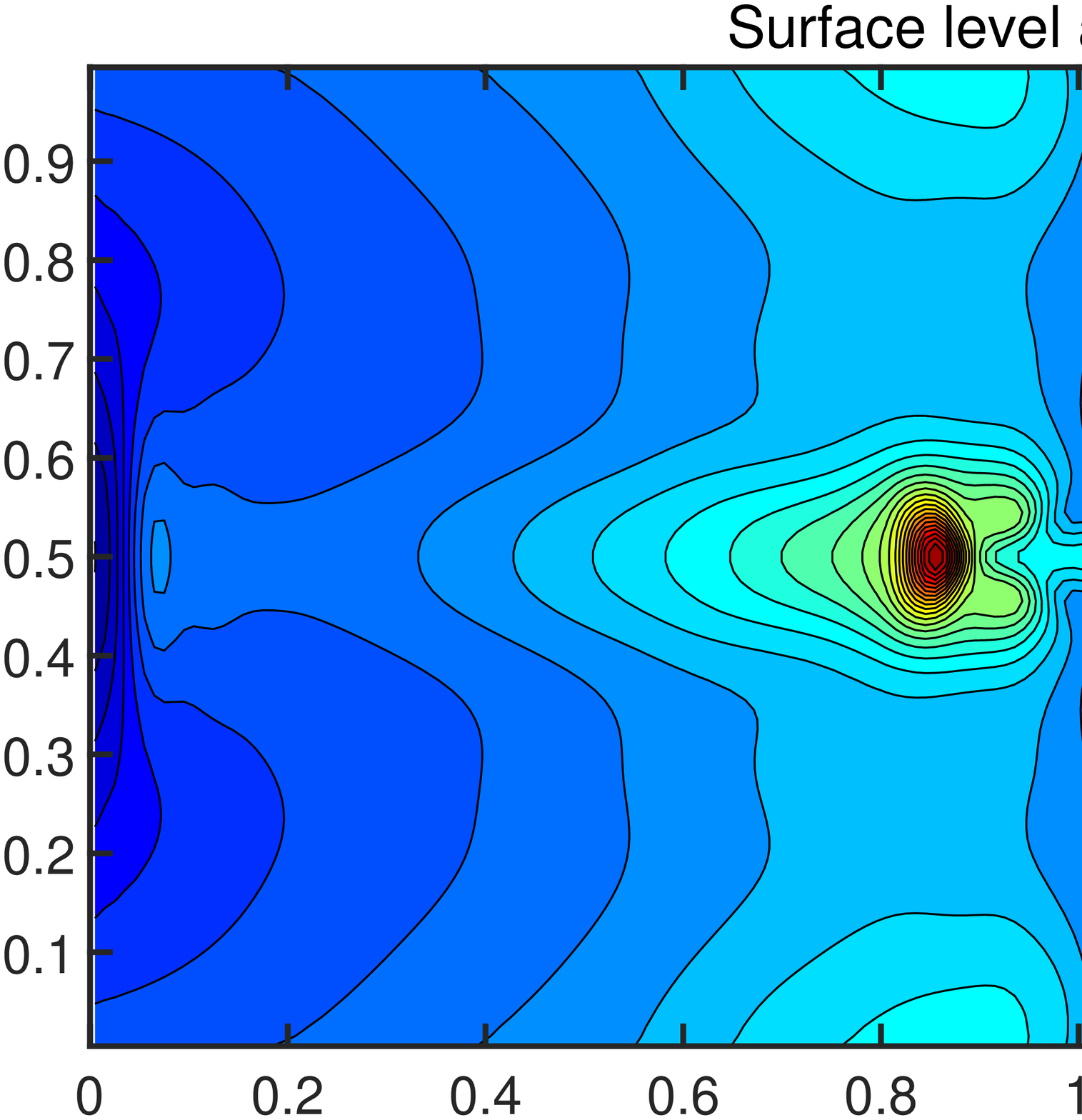}
  }
  {\centering
     \includegraphics[width= 7cm,scale=1]{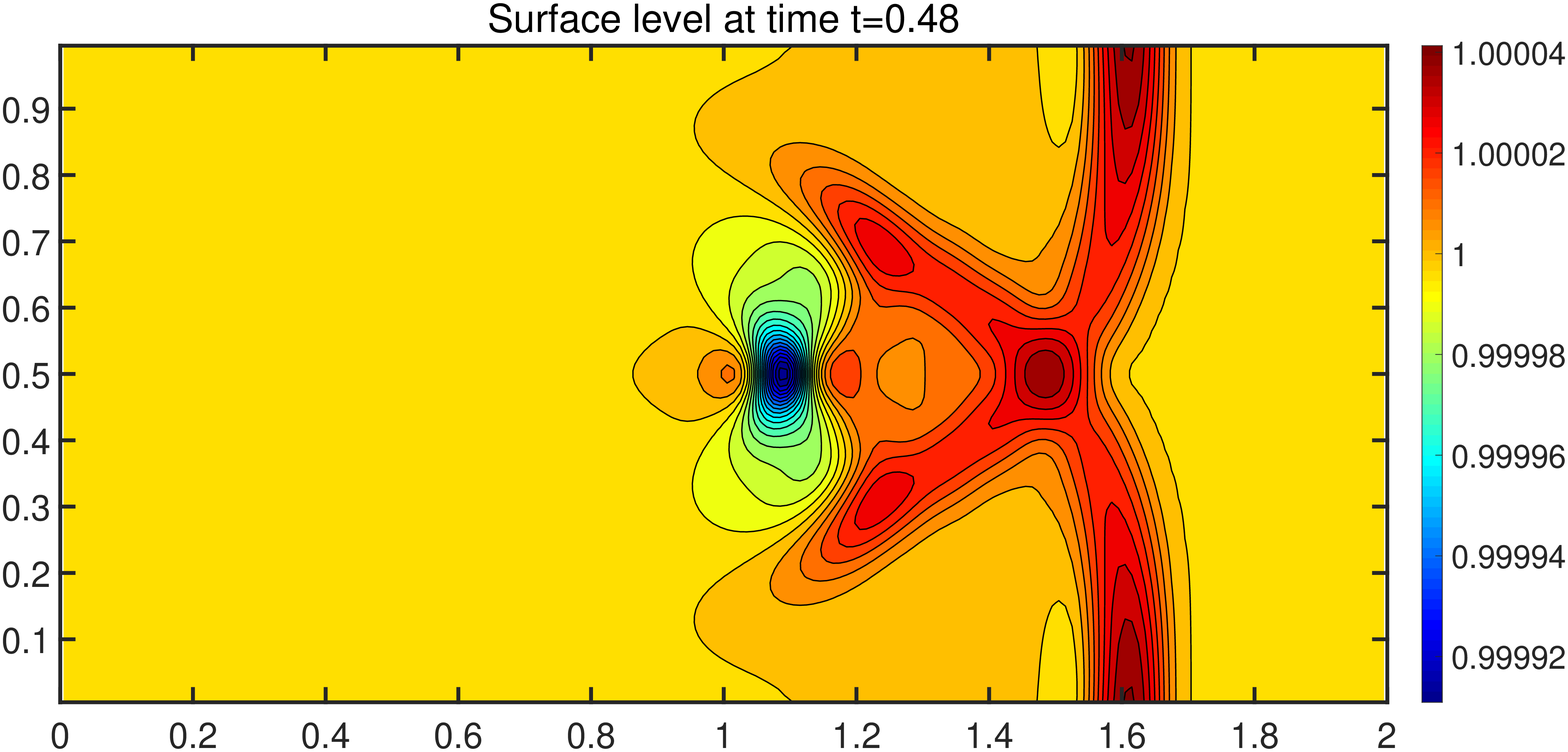}
  }
  {\centering
     \includegraphics[width= 7cm,scale=1]{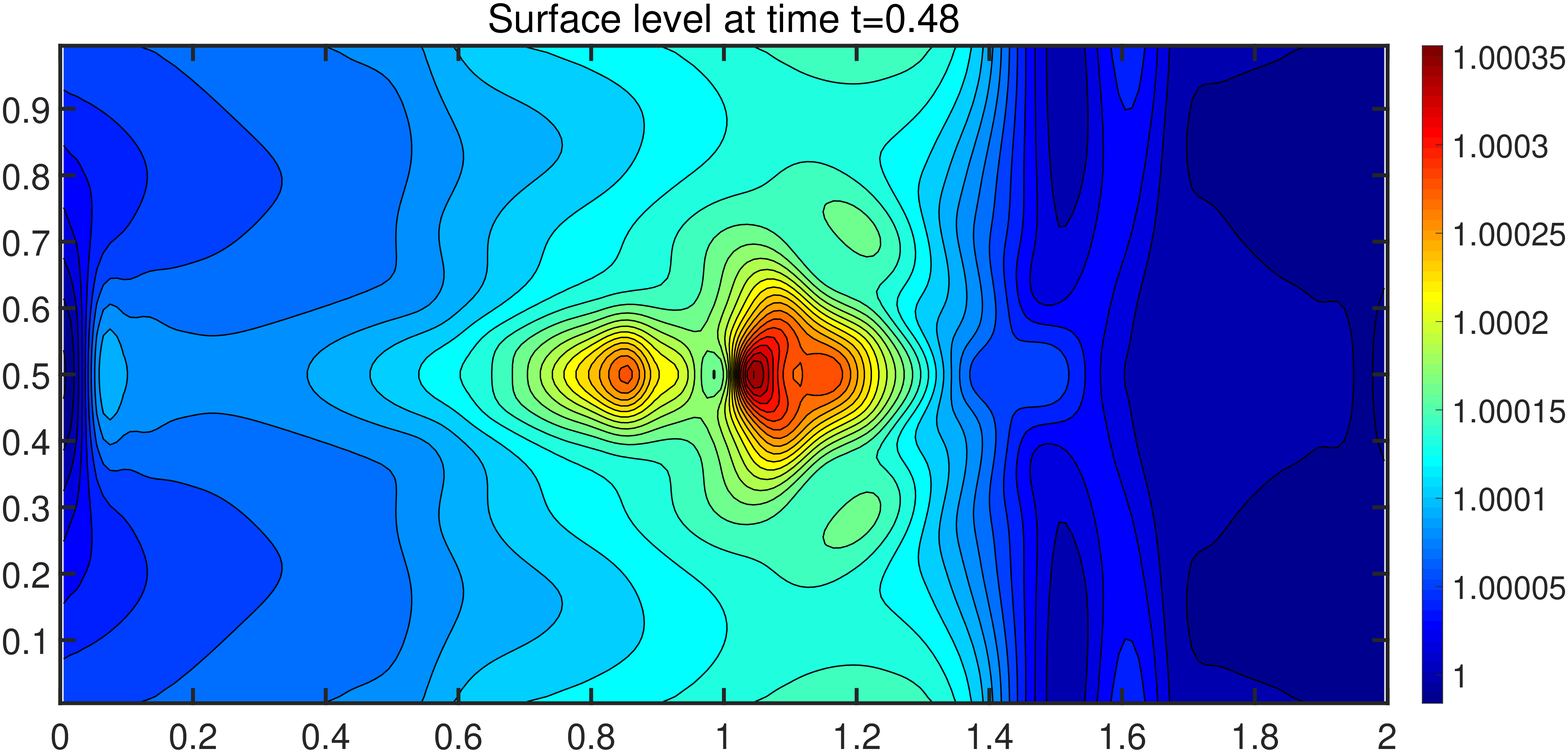}
  }
  {\centering
     \includegraphics[width= 7cm,scale=1]{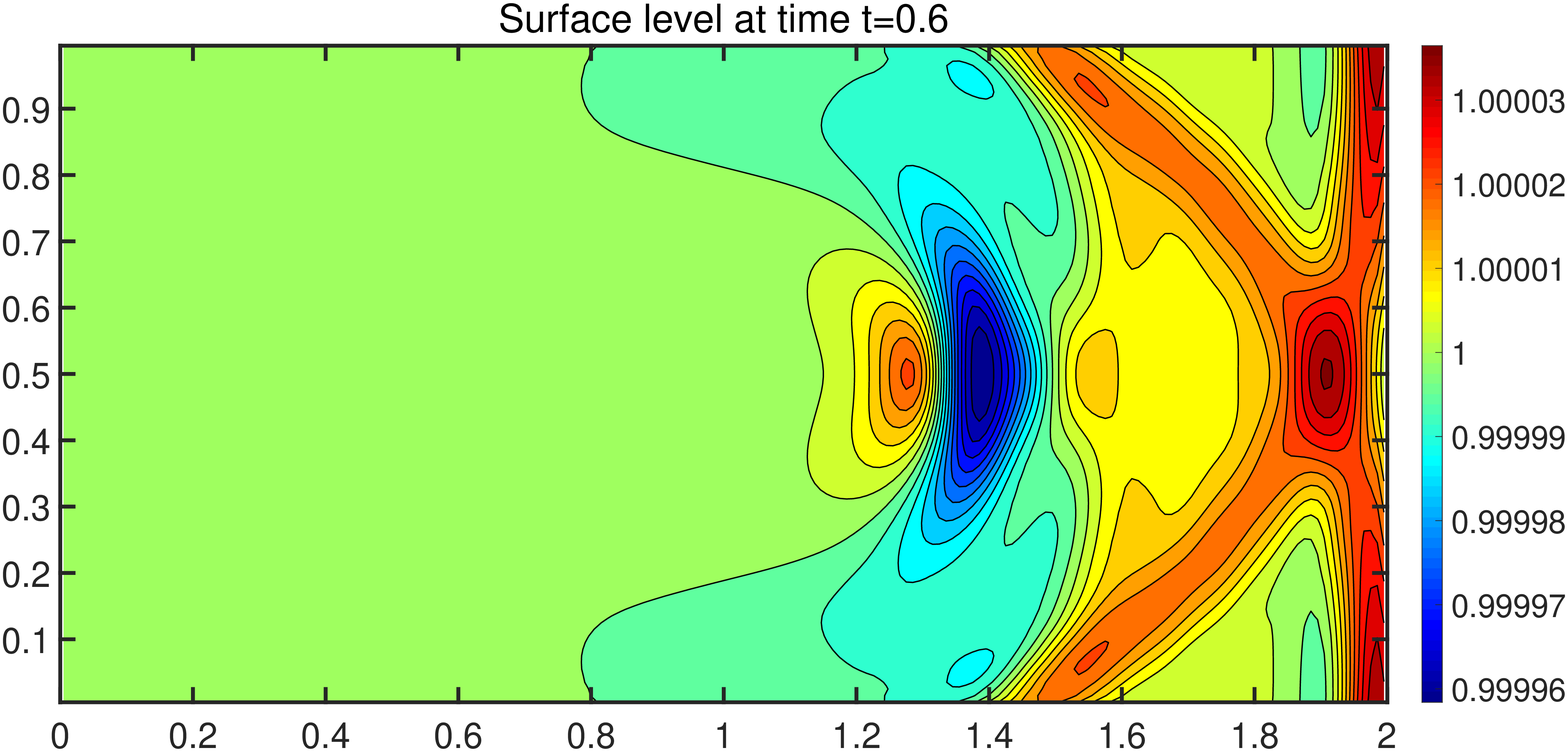}
  }
  {\centering
     \includegraphics[width= 7cm,scale=1]{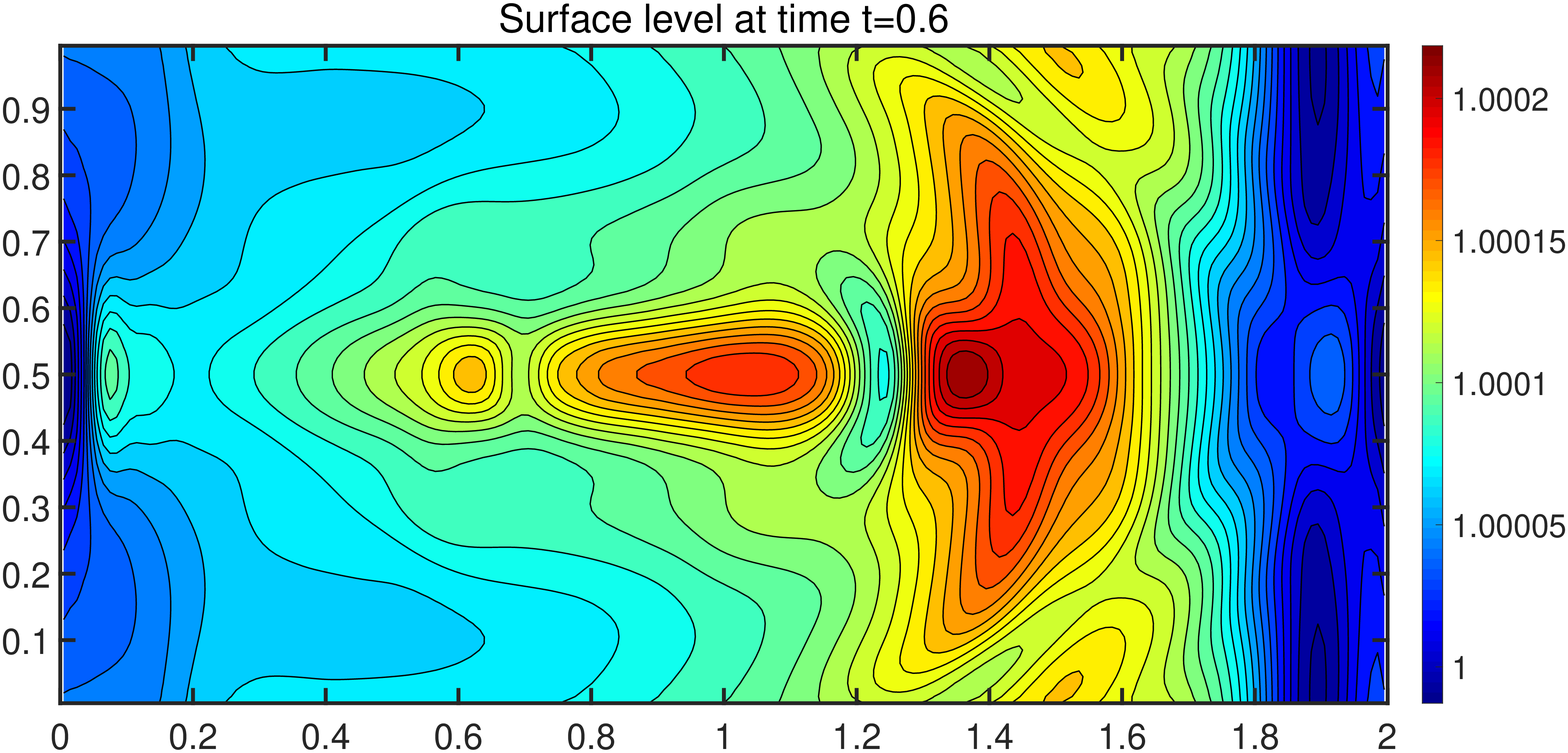}
  }
 \caption{Example \ref{perturbation2D}:  The contours of the surface level with 30 uniformly spaced contour lines, by using 200$\times$100  moving mesh for $\epsilon= 0.0001$. From top to bottom: results at times $t=0.12, 0.24,0.36,0.48,0.6$.  Left: results by the  ALE-WENO hybrid scheme with special source term treatment. Right: results by the non well-balanced ALE-WENO scheme. }\label{surfacespecial2D}
\end{figure}

\section{Conclusion}\label{se:co}
We have developed the structure-preserving finite volume ALE-WENO hybrid schemes for the shallow water equations in one and two dimensions. Rigorous theoretical analysis has shown that the ALE-WENO scheme maintains the positivity-preserving property, which relies on the geometric conservation law on moving meshes. We also developed the well-balanced finite volume ALE-WENO hybrid schemes on moving meshes based on the hydrostatic reconstruction and special source term treatment techniques, in which the positivity-preserving property and the well-balanced property can be satisfied simultaneously.
Numerical experiments in different circumstances were provided to illustrate these schemes' high order accuracy, positivity-preserving, and essentially oscillatory-free properties.
A better agreement can be observed with suitable moving meshes for the specific problem than those calculated on static grids. Meanwhile, the well-balanced schemes on moving meshes can maintain the exact hydrostatic state solutions up to machine error and perform better when capturing the complex small features close or away from the equilibrium state. In our future work, finite volume ALE-WENO hybrid schemes for moving equilibrium state of the shallow water equations and other certain hydrostatic states of conservation balance laws will be considered.

\setcounter{equation}{0}
\renewcommand\theequation{A.\arabic{equation}}
\renewcommand{\appendixname}{Appendix~\Alph{section}}
\begin{appendix}
\section{Proof of Proposition \ref{propp1} }\label{a1}
The scheme with the modified Euler Forward time discretization satisfied by the cell average of the bottom topography $b$ and the surface level $\eta$ can be written as
\begin{align*}
J_{\mathcal{T}_{ij}^{n+1}} \overline {b}_{ij}^{n+1} &= J_{\mathcal{T}_{ij}^{n}}\left(\overline {b}_{ij}^{n} +  \dfrac{1}{\Delta_{ij}^{n}}\Delta t\sum\limits_{l=1}^4 \displaystyle{\int_{e_{ij}^{l,n}} \widehat {\boldsymbol {\mathcal B}}(\boldsymbol w,b^{-},b^{+}, \boldsymbol n_l) \ ds }\right),\\
J_{\mathcal{T}_{ij}^{n+1}} \overline {\eta}_{ij}^{n+1} &= J_{\mathcal{T}_{ij}^{n}}\left(\overline {\eta}_{ij}^{n} -  \dfrac{1}{\Delta_{ij}^{n}}\Delta t\sum\limits_{l=1}^4 \displaystyle{\int_{e_{ij}^{l,n}} \widehat {\boldsymbol {\mathcal H}}^{(1)}(\boldsymbol w,\boldsymbol q^{-},\boldsymbol q^{+}, \boldsymbol n_l) \ ds }\right).
\end{align*}
Let $\bar h_{ij}^n  = \bar \eta_{ij}^{n}-\bar b_{ij}^{n}$, subtracting the above two equations and rewriting the scheme as
\begin{equation}\label{pphbar}
\overline {h}_{ij}^{n+1} = \dfrac{ J_{\mathcal{T}_{ij}^{n}}}{J_{\mathcal{T}_{ij}^{n+1}} } \left(\overline {h}_{ij}^{n} - \dfrac{\Delta t}{\Delta_{ij}^{n} }\sum_{l=1}^4 \displaystyle{\int_{e_{ij}^{l,n}} \widehat {\boldsymbol {\mathcal H}}^{(1)}(\boldsymbol w,\boldsymbol q^{-},\boldsymbol q^{+}, \boldsymbol n_l) + \widehat {\boldsymbol {\mathcal B}}(\boldsymbol w,b^{-},b^{+}, \boldsymbol n_l) \ ds }\right),
\end{equation}
where
\begin{equation}\label{pphhat}
\begin{aligned}
  &\widehat {\boldsymbol {\mathcal H}}^{(1)}(\boldsymbol w,\boldsymbol q^{-},\boldsymbol q^{+}, \boldsymbol n_l) + \widehat {\boldsymbol {\mathcal B}}(\boldsymbol w,b^{-},b^{+}, \boldsymbol n_l)\\
   & = \dfrac{1}{2}\left[ h^{-}\left( (u^{-}-w^x,v^{-}-w^y) \cdot\boldsymbol n_l +\alpha\right) + h^{+}\left( (u^{+}-w^x,v^{+}-w^y) \cdot\boldsymbol n_l-\alpha\right)  \right]\leq \alpha h^{-}.
   \end{aligned}
\end{equation}
The last inequality sees from the definition of  $\alpha$.
Using the  bilinear mapping  (\ref{mappingbilinear})  and the definition  (\ref{deltajn}) we can get  
\begin{equation}\label{barhjn}
\begin{aligned}
\bar h_{ij}^n & = \dfrac{1}{\Delta_{ij}^n} \displaystyle{\iint_{\mathcal T_{ij}^n}   h_{ij}^n(x,y)  \ dxdy} =  \dfrac{1}{\Delta_{ij}} \displaystyle{\iint_{\widetilde{\mathcal T} } \widetilde h_{ij}^n(\xi,\zeta)\mathcal{J}_{\mathcal T_{ij}^n}(\xi,\zeta) \ d\xi d\zeta}\\
& = (1-\sigma_1-\sigma_k ) \delta_{ij}^1+\dfrac{1}{\Delta_{ij}^n}\sigma_1 \sum\limits_{\beta=1}^k  \sigma_\beta \widetilde h^-_{i_\beta,j_1} \mathcal{J}(\xi_{i_\beta},\zeta_{j_1})  + \frac{1}{\Delta_{ij}^n}\sigma_k \sum\limits_{\beta=1}^k  \sigma_\beta \widetilde h^-_{i_\beta,j_k} \mathcal{J}(\xi_{i_\beta},\zeta_{j_k})\\
& = (1-\sigma_1-\sigma_k ) \delta_{ij}^2+\dfrac{1}{\Delta_{ij}^n} \sigma_1 \sum\limits_{\nu=1}^k  \sigma_\nu \widetilde h^-_{i_1,j_\nu} \mathcal{J}(\xi_{i_1},\zeta_{j_\nu})  + \dfrac{1}{\Delta_{ij}^n} \sigma_k \sum\limits_{\nu=1}^k  \sigma_\nu \widetilde h^-_{i_k,j_\nu} \mathcal{J}(\xi_{i_k},\zeta_{j_\nu}).
\end{aligned}
\end{equation}
Let $ \left|e_{ij}^{l,n}\right|$ be the length of an edge at time $t_n$ and $$\mu_l = \dfrac{\Delta t}{\Delta _{ij}^n} |e_{ij}^{l,n}|, \quad \mu = \sum\limits_{l=1}^4 \mu_l = \dfrac{\Delta t}{\Delta _{ij}^n}\sum\limits_{l=1}^4 |e_{ij}^{l,n}|.$$
Denoting $\widetilde e^1$ to be the edge of the reference domain which lies on  $\xi$-axis, and $\widetilde e^2,\widetilde e^3,\widetilde e^4$ are sorted  anticlockwise. Using the metric transformation and  plugging  (\ref{pphhat}), (\ref{barhjn}) back into (\ref{pphbar}) we have
\begin{align*}
 \overline {h}_{ij}^{n+1} &\geq \dfrac{ J_{\mathcal{T}_{ij}^{n}}}{J_{\mathcal{T}_{ij}^{n+1}} } \left(\dfrac{\mu_1}{\mu}\overline {h}_{ij}^{n} + \dfrac{\mu_2}{\mu}\overline {h}_{ij}^{n}+\dfrac{\mu_3}{\mu}\overline {h}_{ij}^{n}+\dfrac{\mu_4}{\mu}\overline {h}_{ij}^{n}- \dfrac{\Delta t}{\Delta_{ij}^{n} }\sum_{l=1}^4 |e_{ij}^{l,n}| \displaystyle{\int_{\widetilde e^l} \alpha \widetilde h^{-} \ d\tilde s}\right)\\
 &\geq \dfrac{ J_{\mathcal{T}_{ij}^{n}}}{J_{\mathcal{T}_{ij}^{n+1}} }
  \left[\dfrac{\mu_1}{\mu}\left((1-\sigma_1-\sigma_k ) \delta_{ij}^1+c_0 \sigma_1 \sum\limits_{\beta=1}^k  \sigma_\beta \widetilde h^-_{i_\beta,j_1} + c_0 \sigma_k \sum\limits_{\beta=1}^k  \sigma_\beta \widetilde h^-_{i_\beta,j_k}  \right) - \mu_1 \sum\limits_{\beta=1}^k  \sigma_\beta \alpha \widetilde h^-_{i_\beta,j_1}\right.\\
 &\qquad  \qquad + \dfrac{\mu_2}{\mu}\left( (1-\sigma_1-\sigma_k ) \delta_{ij}^2+c_0 \sigma_1 \sum\limits_{\nu=1}^k  \sigma_\nu \widetilde h^-_{i_1,j_\nu} + c_0 \sigma_k \sum\limits_{\nu=1}^k  \sigma_\nu \widetilde h^-_{i_k,j_\nu} \right) - \mu_2 \sum\limits_{\nu=1}^k  \sigma_\nu \alpha\widetilde h^-_{i_k,j_\nu}\\
 &\qquad  \qquad+\dfrac{\mu_3}{\mu}\left((1-\sigma_1-\sigma_k ) \delta_{ij}^1+c_0 \sigma_1 \sum\limits_{\beta=1}^k  \sigma_\beta \widetilde h^-_{i_\beta,j_1} + c_0 \sigma_k \sum\limits_{\beta=1}^k  \sigma_\beta \widetilde h^-_{i_\beta,j_k} \right) - \mu_3 \sum\limits_{\beta=1}^k  \sigma_\beta \alpha \widetilde h^-_{i_\beta,j_k}\\
 &\left. \qquad  \qquad+\dfrac{\mu_4}{\mu}\left( (1-\sigma_1-\sigma_k ) \delta_{ij}^2+c_0 \sigma_1 \sum\limits_{\nu=1}^k  \sigma_\nu \widetilde h^-_{i_1,j_\nu} + c_0 \sigma_k \sum\limits_{\nu=1}^k  \sigma_\nu \widetilde h^-_{i_k,j_\nu} \right) - \mu_4 \sum\limits_{\nu=1}^k  \sigma_\nu \alpha\widetilde h^-_{i_1,j_\nu}\right].
   \end{align*}
The second  inequality is due to  (\ref{c0}).
Eventually, we can get:
\begin{align*}
 \overline {h}_{ij}^{n+1} &\geq  \dfrac{ J_{\mathcal{T}_{ij}^{n}}}{J_{\mathcal{T}_{ij}^{n+1}} } \left[\dfrac{\mu_1+\mu_3}{\mu}(1-\sigma_1-\sigma_k ) \delta_{ij}^1 + \dfrac{\mu_1}{\mu} c_0 \sigma_k \sum\limits_{\beta=1}^k  \sigma_\beta \widetilde h^-_{i_\beta,j_k}+ \dfrac{\mu_3}{\mu}c_0 \sigma_1 \sum\limits_{\beta=1}^k  \sigma_\beta \widetilde h^-_{i_\beta,j_1}\right.\\
 & \qquad  \qquad  \qquad  \qquad  \qquad  \quad \ \ \  + \mu_1 \sum\limits_{\beta=1}^k  \sigma_\beta \widetilde h^-_{i_\beta,j_1} \left(\dfrac{1}{\mu}c_0  \sigma_1-\alpha \right) + \mu_3 \sum\limits_{\beta=1}^k  \sigma_\beta \widetilde h^-_{i_\beta,j_k} \left(\dfrac{1}{\mu}c_0  \sigma_k-\alpha \right) \\
 &\qquad  \qquad +\dfrac{\mu_2+\mu_4}{\mu} (1-\sigma_1-\sigma_k ) \delta_{ij}^2 + \dfrac{\mu_2}{\mu}c_0 \sigma_1 \sum\limits_{\nu=1}^k  \sigma_\nu \widetilde h^-_{i_1,j_\nu}  + \dfrac{\mu_4}{\mu}c_0\sigma_k \sum\limits_{\nu=1}^k  \sigma_\nu \widetilde h^-_{i_k,j_\nu}\\
 & \left. \qquad  \qquad  \qquad  \qquad  \qquad  \quad \ \ \  + \mu_2 \sum\limits_{\nu=1}^k  \sigma_\nu \widetilde h^-_{i_k,j_\nu} \left(\dfrac{1}{\mu}c_0 \sigma_k-\alpha \right) + \mu_4 \sum\limits_{\nu=1}^k  \sigma_\nu \widetilde h^-_{i_1,j_\nu} \left(\dfrac{1}{\mu}c_0  \sigma_1-\alpha \right)\right].
   \end{align*}
Using the equations (\ref{Jacobi2D}) and (\ref{c0}), combined with the CFL condition  (\ref{CFLcondition}) we obtain
\begin{align*}
\dfrac{ J_{\mathcal{T}_{ij}^{n}}}{J_{\mathcal{T}_{ij}^{n+1}} } =  1-\dfrac{1}{J_{\mathcal{T}_{ij}^{n+1}}} \left(J_{\mathcal{T}_{ij}^{n}} - J_{\mathcal{T}_{ij}^{n+1}}\right)
 \geq 1-\dfrac{1}{c_0 \Delta _{ij}^{n+1}}\Delta t  |\nabla\cdot {\boldsymbol w^n}| J_{\mathcal{T}_{ij}^{n}}\geq 0.
\end{align*}
Hence, by the CFL constraint (\ref{CFLcondition}) as well as the non-negative values  $\widetilde h^-_{i_\beta,j_1}$, $\widetilde h^-_{i_\beta,j_k}$, $\delta_{ij}^1$, $\widetilde h^-_{i_1,j_\alpha}$, $\widetilde h^-_{i_k,j_\alpha}$, $\delta_{ij}^2$, the cell average of $\bar h_{ij}^{n+1}$ can also be  non-negative. The proof is completed. Notice that for the modified third order SSP-RK method, we can prove the positivity-preserving property in a similar way, thus we omit it here.

\end{appendix}

\subsection*{Acknowledgements}
Research of Yinhua Xia is supported by NSFC grant No 11871449.
Research of Yan Xu is supported by NSFC grant No. 12071455.

\addcontentsline{toc}{section}{References}
\bibliographystyle{abbrv}
\normalem
\bibliography{shortreference}
\end{document}